\newtheorem{theorem}{Theorem}[section]
\newtheorem{prop}[theorem]{Proposition}
\newtheorem{lemma}[theorem]{Lemma}
\newtheorem{corollary}[theorem]{Corollary}
\newtheorem{definition}[theorem]{Definition}
\theoremstyle{remark}
\newtheorem{remark}[theorem]{\bf {Remark}}
\newtheorem{example}[theorem]{\bf {Example}}
\numberwithin{equation}{section}
\DeclareMathOperator{\cpt}{cpt}
\DeclareMathOperator{\Pf}{Pf}
\DeclareMathOperator{\dR}{dR}
\DeclareMathOperator{\Or}{or}
\DeclareMathOperator{\End}{End}
\DeclareMathOperator{\Ker}{Ker}
\DeclareMathOperator{\PD}{PD}
\DeclareMathOperator{\supp}{supp}
\DeclareMathOperator{\tina}{\widetilde{\nabla}}
\DeclareMathOperator{\sing}{sing}
\DeclareMathOperator{\SO}{SO}
\DeclareMathOperator{\Bl}{{\bf B}l}
\DeclareMathOperator{\rank}{rank}
\DeclareMathOperator{\id}{id}
\DeclareMathOperator{\inte}{int}
\DeclareMathOperator{\TPf}{TPf}
\DeclareMathOperator{\Imag}{Im}
\newcommand\bR{{\mathbb R}}
\newcommand{\eE}{\EuScript{E}}
\newcommand{\eT}{\EuScript{T}}
\newcommand{\eSt}{\EuScript{S}}
\newcommand{\ra}{\rightarrow}
\begin{document}

\title{Chern-Gauss-Bonnet  and Lefschetz duality  from a currential point of view}

\author{Daniel Cibotaru}
\address{Universidade Federal do Cear\'a, Fortaleza, CE, Brasil}
\email{daniel@mat.ufc.br} 

\subjclass[2010]{Primary 58A25,  49Q15; Secondary 53C05.} 
\begin{abstract} We  use the mapping cone  for the relative deRham cohomology of a manifold with boundary in order to show that the  Chern-Gauss-Bonnet Theorem  for oriented Riemannian vector bundles over such manifolds is a manifestation of Lefschetz Duality in any of the two embodiments of the latter. We explain how  Thom isomorphism fits into this picture, complementing thus the classical results about Thom forms with compact support. When the rank is  \emph{odd}, we construct, by using secondary transgression forms introduced here, a new  closed pair of forms on the disk bundle associated to a vector bundle, pair  which is Lefschetz dual to the zero section.

\end{abstract}
\maketitle
\tableofcontents
\section{Introduction}

The mapping cone construction is a standard tool in algebraic topology. The purpose of this note is to put it to good use   in the context of  relative deRham cohomology on a manifold with boundary.  So far, the Dirichlet representation of relative deRham classes as forms which pull-back to zero over the boundary has been by far the favorite sister in the literature. Nevertheless, we contend that working with (closed) pairs of forms has certain advantages such as turning explicit several maps of interest in cohomology. Moreover, the mapping cone construction, which any student can learn about for example from the classical book of Bott and Tu \cite{BT} sheds new light on certain classical results like the Chern-Gauss-Bonnet Theorem on manifolds with boundary. The fact that Chern Theorem \cite{Ch2} on such manifolds is indeed a manifestation of Lefschetz Duality will not come as a surprise for anybody who knows that its boundaryless counterpart is a combination of two facts:  an explicit Poincar\'e Duality statement plus the Poincar\'e-Hopf Theorem. It might also be expected to get in fact two  statements  that can be called Chern-Gauss-Bonnet Theorem for a manifold with boundary corresponding to the two classes of isomorphisms:
\begin{center}
relative cohomology $\longleftrightarrow$ absolute homology\\~\\
absolute cohomology $\longleftrightarrow$ relative homology
\end{center}
What we found rather surprising is that one can cast the Chern-Gauss-Bonnet Theorem on a manifold with boundary  also as a realization of  Thom isomorphism. The closed pair (Pfaffian, transgression of the Pfaffian) plays the same role as the Thom form with compact support when one considers  on one side of the Thom isomorphism the  relative cohomology of the pair (disk bundle, spherical bundle) instead of the (isomorphically equivalent) cohomology with compact supports. This is the \emph{even} rank picture. Now Thom isomorphism holds irrespective of the parity of the rank, provided the vector bundle is oriented. Therefore,  when the rank is \emph{odd} we produce a new closed pair of forms on the same manifold with boundary that fullfills the same property as the already described pair in the even case. In particular, this pair is Lefschetz dual to the zero section. This result could probably be  viewed as an odd rank Chern-Gauss-Bonnet Theorem for a manifold with boundary.  In both cases, the  pairs can be used to define \emph{explicit} Thom forms with compact support in a straighforward manner.

Before we take a look at  some details let us say a few words about the techniques used to prove these results. Following the work of Harvey and Lawson  and their school \cite{HL1,La}, we presented in \cite{Ci}  a general transgression formula for vertical, tame Morse-Bott-Smale flows over fiber bundles $P\ra B$ and one of the applications included in \cite{Ci} was a short proof of the Chern-Gauss-Bonnet. The proof of the general transgression formula is a rather intricate business, but the context in which it is used for the proof of the Chern Theorem elicits a more straightforward approach, extendable also to the case of manifolds with boundary. Therefore what we do here is pretty much self-contained, using the previously cited works only for inspiration. We could add that the most sophisticated tool in this note is probably Stokes Theorem.

The main results are as follows. Let $M$ be an oriented compact manifold with boundary of dimension $n$ and let
\[ \Omega^k(M,\partial M):=\Omega^k(M)\oplus \Omega^{k-1}(\partial M)\]
be the vector space of pairs of differential forms. Define a differential operator on pairs:
\[ d(\omega,\gamma)=(-d\omega, \iota^*\omega+d\gamma),
\]
where $\iota:\partial M\ra M$ is the canonical inclusion. It is well-known that the resulting cohomology groups are isomorphic with the relative singular cohomology groups of $(M,\partial M)$. 

On the homology side, we will use currents with $\mathscr{D}'(\cdot)$ denoting the space of distributionally valued forms, another name for currents. Define:
\[ \mathscr{D}'_k(M,\partial M):=\mathscr{D}'_k(M)\oplus\mathscr{D}'_{k-1}(\partial M)
\]
with differential:
\[ d(T,S)=(\iota_*S-dT,dS).
\]
The homology of this complex is the relative homology of $M$. Then Lefschetz Duality takes the following form:
\pagebreak
\begin{theorem}\label{LefDuintro}
The maps:
\begin{itemize}
\item[(i)] \[\mathscr{L}_I:\Omega^*(M,\partial M)\ra\mathscr{D}'_{n-*}(M),\]\[ \mathscr{L}_I(\omega,\gamma)= \left\{\eta\ra \int_M\omega\wedge \eta +\int_{\partial M}\gamma\wedge\iota^*\eta\right\}  \]
\item[(ii)] \[\mathscr{L}_{II}:\Omega^*(M)\ra\mathscr{D}'_{n-*}(M,\partial M),\]\[ \mathscr{L}_{II}(\eta)= \left ( \omega\ra \int_M\omega\wedge \eta , \gamma\ra \int_{\partial M}\gamma\wedge\iota^*\eta\right)  \]
\end{itemize}
commute (up to a sign) with the differentials and induce isomorphisms in (co)homology.
\end{theorem}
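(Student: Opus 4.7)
The proof reduces to Stokes' theorem plus a diagram chase: first verify that $\mathscr{L}_I$ and $\mathscr{L}_{II}$ are chain maps (up to a sign), then identify them at the level of (co)homology with the classical Lefschetz duality isomorphisms by means of a five-lemma argument applied to the long exact sequences of the pair.

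The chain map property is an application of Stokes' theorem with careful sign tracking. For $\mathscr{L}_I$, one pairs $d(\omega,\gamma)=(-d\omega,\iota^*\omega+d\gamma)$ against a test form $\eta$ and applies Stokes on $M$: the resulting boundary term $\pm\int_{\partial M}\iota^*\omega\wedge\iota^*\eta$ is exactly what the $\iota^*\omega$ component of the pair differential is designed to contribute, while Stokes on the closed manifold $\partial M$ matches the $d\gamma$ and the $\gamma\wedge d\iota^*\eta$ terms. The asymmetric signs in both the pair differential $d(\omega,\gamma)=(-d\omega,\iota^*\omega+d\gamma)$ and the current pair differential $d(T,S)=(\iota_*S-dT,dS)$ are tuned precisely so that these matchings are exact. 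The verification for $\mathscr{L}_{II}$ is entirely parallel.

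For the isomorphism assertion, I would set up the two short exact sequences of complexes
\[
0\to\Omega^{*-1}(\partial M)\xrightarrow{\gamma\mapsto(0,\gamma)}\Omega^*(M,\partial M)\xrightarrow{(\omega,\gamma)\mapsto\omega}\Omega^*(M)\to 0,
\]
\[
0\to\mathscr{D}'_*(M)\xrightarrow{T\mapsto(T,0)}\mathscr{D}'_*(M,\partial M)\xrightarrow{(T,S)\mapsto S}\mathscr{D}'_{*-1}(\partial M)\to 0,
\]
which produce the familiar long exact sequences of the pair in deRham cohomology and in current homology respectively. The chain-level identities $\mathscr{L}_I(0,\gamma)(\eta)=\int_{\partial M}\gamma\wedge\iota^*\eta$ and $\mathscr{L}_{II}(\eta)(\omega,\gamma)=\int_M\omega\wedge\eta+\int_{\partial M}\gamma\wedge\iota^*\eta$ assemble $\mathscr{L}_I$ and $\mathscr{L}_{II}$, together with ordinary Poincaré duality on the boundaryless manifold $\partial M$ (given by $\beta\mapsto\{\tau\mapsto\int_{\partial M}\beta\wedge\tau\}$), into a commutative ladder between the two long exact sequences; the outer vertical arrows of every five-term segment are the classical Poincaré duality isomorphisms on $\partial M$, and the inner verticals alternate between $\mathscr{L}_I$ and $\mathscr{L}_{II}$.

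Finally, a downward induction on the degree $k$, starting from the vacuous case $k>n$ and applying the five-lemma at each step, yields the simultaneous isomorphism of $\mathscr{L}_I^k$ and $\mathscr{L}_{II}^k$ in every degree. The main obstacle I anticipate is not the diagram chase itself but rather two coupled difficulties: on one hand, maintaining coherent sign conventions throughout (the current differential, the induced orientation on $\partial M$, and the two pair differentials must all be fixed so that every square in the ladder commutes strictly, not merely up to signs depending on the degree); on the other, the inductive step advances $\mathscr{L}_I$ and $\mathscr{L}_{II}$ in an interlocking fashion rather than independently, so one must sequence the five-lemma applications so that the isomorphisms required at each stage are already available from the previous stage.
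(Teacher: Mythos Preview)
Your verification that $\mathscr{L}_I$ and $\mathscr{L}_{II}$ are chain maps up to sign, and your construction of the commutative ladder linking the two long exact sequences via $\mathscr{L}_I$, $\mathscr{L}_{II}$, and $\PD_{\partial M}$, are both correct and coincide with what the paper does.

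The gap is in the inductive step. In the ladder the vertical maps repeat with period three in the pattern $\mathscr{L}_{II},\ \PD_{\partial M},\ \mathscr{L}_I,\ \mathscr{L}_{II},\ \PD_{\partial M},\ \mathscr{L}_I,\ldots$ Thus in any five consecutive verticals, exactly one or two are $\PD_{\partial M}$ and the remaining three or four are of type $\mathscr{L}_I$ or $\mathscr{L}_{II}$. Concretely, to conclude $\mathscr{L}_I^{k}$ from the five lemma you need $\mathscr{L}_{II}^{k-1}$ and $\mathscr{L}_{II}^{k}$; to conclude $\mathscr{L}_{II}^{k}$ you need $\mathscr{L}_I^{k}$ and $\mathscr{L}_I^{k+1}$. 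Each of $\mathscr{L}_I^{k}$ and $\mathscr{L}_{II}^{k}$ therefore presupposes the other in the same degree, and no ordering of the five-lemma applications (downward, upward, or both) breaks this loop: starting from the trivial degrees $k<0$ or $k>n$ one can never reach any $k\in\{0,\dots,n\}$. The difficulty you flag at the end---that ``the inductive step advances $\mathscr{L}_I$ and $\mathscr{L}_{II}$ in an interlocking fashion''---is not a bookkeeping nuisance but a genuine obstruction: with only $\PD_{\partial M}$ as input the ladder is underdetermined.

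The paper escapes this by establishing $\mathscr{L}_I$ independently, \emph{before} invoking the ladder. It first identifies the mapping-cone cohomology with that of the Dirichlet complex $\Omega^*_D(M,\partial M)$, and then shows the pairing $H^k(\Omega^*_D)\times H^{n-k}(M)\to\bR$ is nondegenerate, either via the Hodge--Morrey--Friedrichs decomposition or, more elementarily, by identifying $H^*(\Omega^*_D(M,\partial M))$ with $H^*_{\cpt}(M\setminus\partial M)$ and invoking Poincar\'e duality on the open manifold $M\setminus\partial M$. Combined with the Homological Duality Lemma $H_{n-k}(\mathscr{D}'_*(M))\simeq (H^{n-k}_{\dR}(M))^*$, this yields that $\mathscr{L}_I$ is an isomorphism in every degree. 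Only then does the five lemma on the ladder give $\mathscr{L}_{II}$---no induction is needed at that point, since all the $\mathscr{L}_I^k$ are already known. You need to supply some analogous independent input for one of the two maps.
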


We use these Lefschetz Duality isomorphisms in order to prove the following generalization of \cite{Ch2}:
\begin{theorem}\label{CGBintro} Let $\pi:E\ra B$ be an oriented Riemannian vector bundle over a manifold with boundary of rank $2k$. Suppose $E$ is endowed with a metric compatible connection $\nabla$ and a section $s:B\ra E$ transversal to the zero section. Then 
\begin{itemize}
\item[(a)] $\mathscr{L}_{II}(\Pf(\nabla))$ and $s^{-1}(0)$ represent the same class in $H_{n-2k}(B,\partial B)$, i.e. $s^{-1}(0)$ and the Pfaffian associated to $\nabla$ are Lefschetz dual;
\item[(b)] If $s\bigr|_{\partial B}$ is everywhere non-vanishing then there exists a transgression class $\TPf(\nabla,s)\in \Omega^{2k-1}(\partial M)$ such that the pair $(\Pf(\nabla),-\TPf(\nabla,s))$ is closed and \linebreak $\mathscr{L}_{I}(\Pf(\nabla),-\TPf(\nabla,s))$ and $s^{-1}(0)$ represent the same class in $H_{n-2k}(B)$.  
\end{itemize}
\end{theorem}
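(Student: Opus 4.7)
The proof follows the transgression-via-flow strategy of \cite{Ci}, adapted to the boundary setting via Theorem \ref{LefDuintro}. The central object is a transgression form $\Psi \in \Omega^{2k-1}(E \setminus \{0\})$ for the Pfaffian on the complement of the zero section, obtained for instance by integrating $\iota_V$ of a Mathai--Quillen-type Thom form on $E$ along the vertical radial flow, or equivalently as a global angular form attached to $\nabla$. What matters are the two properties $d\Psi = \pi^*\Pf(\nabla)$ on $E \setminus \{0\}$ and $\int_{S(E_b)}\Psi = 1$ for each fiber-sphere. Since $\pi\circ s = \id_B$ one has $d(s^*\Psi) = \Pf(\nabla)$ on $B \setminus s^{-1}(0)$. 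Under the hypothesis of (b) the composite $s|_{\partial B}$ lands in $E \setminus \{0\}$, so $\TPf(\nabla,s) := (s|_{\partial B})^*\Psi$ is a genuine smooth form, and closedness of $(\Pf(\nabla),-\TPf(\nabla,s))$ in $\Omega^*(B,\partial B)$ is then immediate: $d\Pf(\nabla) = 0$ by Chern--Weil, and $\iota^*\Pf(\nabla) - d\TPf(\nabla,s) = \iota^*\Pf(\nabla) - (s|_{\partial B})^*\pi^*\Pf(\nabla) = 0$.

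The heart of the argument is the Stokes-with-residue identity obtained by applying Stokes' theorem to $s^*\Psi \wedge \eta$ on $B \setminus N_\epsilon(s^{-1}(0))$ (with $N_\epsilon$ a tubular $\epsilon$-neighborhood of $s^{-1}(0)$) and letting $\epsilon \to 0$. For every test form $\eta$ of complementary degree this yields, after sign bookkeeping,
\[
\int_B \Pf(\nabla) \wedge \eta \;-\; \int_{\partial B} \TPf(\nabla,s) \wedge \iota^*\eta \;=\; \int_B s^*\Psi \wedge d\eta \;-\; \int_{s^{-1}(0)} \eta .
\]
The left-hand side is $\mathscr{L}_I(\Pf(\nabla),-\TPf(\nabla,s))(\eta)$ by the very definition of $\mathscr{L}_I$, while the first term on the right displays this current modulo $[s^{-1}(0)]$ as the coboundary of the principal-value current $[s^*\Psi]_B \in \mathscr{D}'_{n-2k+1}(B)$, which proves (b). For (a) the non-vanishing hypothesis on $\partial B$ is dropped, so $(s|_{\partial B})^*\Psi$ is only singular along $\partial s^{-1}(0) \subset \partial B$, a closed submanifold by transversality of $s|_{\partial B}$. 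Both pullbacks are then interpreted as principal-value currents (integrability of the singularity following from the $O(|v|^{-(2k-1)})$ behaviour of $\Psi$ near the zero section combined with transversality), and the pair $(T_B, T_{\partial B}) := ([s^*\Psi]_B, [(s|_{\partial B})^*\Psi]_{\partial B})$ lies in $\mathscr{D}'_{n-2k+1}(B,\partial B)$. Running the Stokes-shrinking-tube computation both on $B$ and on the closed manifold $\partial B$, one finds
\[
d(T_B, T_{\partial B}) \;=\; \mathscr{L}_{II}(\Pf(\nabla)) \;-\; \bigl([s^{-1}(0)]_B,\, [\partial s^{-1}(0)]_{\partial B}\bigr)
\]
in the mapping-cone complex, the last term being the standard representative of $[s^{-1}(0)] \in H_{n-2k}(B,\partial B)$; this settles (a).

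The chief technical point in both parts is the residue identity
\[
\lim_{\epsilon \to 0} \int_{\partial N_\epsilon(s^{-1}(0))} s^*\Psi \wedge \eta \;=\; \int_{s^{-1}(0)} \eta .
\]
Transversality of $s$ along $s^{-1}(0)$ lets us pick a tubular neighborhood in which $s$ is modelled by the projection $s^{-1}(0) \times \R^{2k} \to \R^{2k}$; the residue then reduces, by Fubini and continuity, to the normalisation $\int_{S^{2k-1}} \Psi = 1$ multiplied by $\int_{s^{-1}(0)} \eta$. The one wrinkle special to part (a) is that near $\partial s^{-1}(0)$ the $\epsilon$-tubes meet $\partial B$, but the fiberwise nature of the residue renders this harmless once one interprets the integrals on $\partial B$ and on the manifold-with-boundary $s^{-1}(0)$ in the principal-value and ordinary sense respectively. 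Everything else in the argument reduces to classical Stokes, the definitions of $\mathscr{L}_I$ and $\mathscr{L}_{II}$, and careful tracking of signs through the mapping-cone differentials guaranteed to close up by Theorem \ref{LefDuintro}.
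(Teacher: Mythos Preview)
Your argument is sound, but—despite the opening sentence—it is not the flow strategy of \cite{Ci} that the paper itself employs. You run a direct Bott--Chern style residue computation: fix a global transgression form $\Psi$ on $E\setminus\{0\}$ with $d\Psi=\pi^*\Pf(\nabla)$ and unit fibre integral, pull it back by $s$, and extract $s^{-1}(0)$ by Stokes on the complement of a shrinking tube; the principal-value currents $[s^*\Psi]_B$ and $[(s|_{\partial B})^*\Psi]_{\partial B}$ then furnish explicit primitives in the relevant mapping-cone complexes. The paper instead compactifies each fibre, working on $P=S(\bR\oplus E)\to B$, embeds $s$ as a section $\varphi_0:B\to P$, and flows it by the vertical gradient of the height function. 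The flat limits $\lim_{t\to\pm\infty}\mathscr{L}_I(\varphi_t^*\omega,\varphi_t^*\gamma)$ and $\lim_{t\to\pm\infty}\mathscr{L}_{II}(\varphi_t^*\eta)$ are computed in Theorem~\ref{transfor} (via a blow-up of the critical loci), and the homotopy operators of Proposition~\ref{heq} link these limits to the $t=0$ data; the forms fed into this machine are $\Pf(\nabla^{\tau^\perp})$ and its transgressions on $P$, not a global $\Psi$ on $E\setminus\{0\}$. The two definitions of $\TPf(\nabla,s)$ agree: taking $\Psi=\TPf(\pi^*\nabla,s^\tau)$ on $\pi^*E\to E\setminus\{0\}$ and using naturality of transgression, $(s|_{\partial B})^*\Psi$ is exactly the paper's Chern--Weil transgression between $\nabla$ and $d\oplus\nabla^\perp$ on $E|_{\partial B}$. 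Your route is shorter and more self-contained for this single statement; the paper's flow apparatus (Theorem~\ref{transfor}) is built once and then reused wholesale for the odd-rank Thom result (Theorem~\ref{Th1.4}), where no single global angular form is available and secondary transgressions have to be manufactured by other means.
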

The standard Chern Theorem is obtained when $E=TB$ in part (b) of the above Theorem as Poincar\'e-Hopf Theorem is known to hold on manifolds with boundary as well.

 We then turn to analize the following  diagram of isomorphisms:
 \begin{equation}\label{eqintro} \xymatrix{ H^{i+k}(E, E_0)\ar[r]^{\iota^*}\ar[d]_{\mu} & H^{i+k}(\overline{DE},SE)\ar[r]^{LD ~ I}\ar@/_0.5pc/[d]_{\nu} & H_{n-i}(\mathscr{D}_*(\overline{DE}))\ar@/^0.5pc/[d]^{\pi_*}\\
             H^{i+k}_{\cpt}(E) \ar@/^0.5pc/[r]^{\int} &  H^{i}(B)\ar@/_0.5pc/[u]_{\nu^{-1}}\ar@/^0.5pc/[l]^{\tau \wedge\pi^*( \cdot)}\ar[r]^{PD\qquad} &  H_{n-i}(\mathscr{D}_*(B))\ar@/^0.5pc/[u]^{\iota_*}.}
\end{equation}
where $DE\ra B$ is the unit disk bundle associated to $E\ra B$. The bottom left corner is the well-known Thom isomorphism using forms with compact support. One of the main contributions of this note is to make explicit the first two vertical arrows in the above diagram in both the even and odd rank cases. The middle one is especially interesting.

Let $s^{\tau}:SE\ra \pi^*E$ be the tautological section.  As a consequence of Theorem \ref{CGBintro} we show:
\begin{theorem} \label{Th1.3}The pair $(\Pf(\pi^*\nabla),-\TPf(\pi^*\nabla,s^{\tau}))\in H^{2k}(\overline{DE},SE)$ is Lefschetz dual to the zero section $B\hookrightarrow \overline{DE}$ and consequently the isomorphism $\nu^{-1}$ is
\[ \eta\ra (\Pf(\pi^*\nabla)\wedge\pi^*\eta,-\TPf(\pi^*\nabla,s^{\tau})\wedge \pi^*\eta),
\]
with inverse:
\[ (\omega,\gamma)\ra \int_{\overline{DE}/B}\omega+\int_{SE/B}\gamma.
\]
\end{theorem}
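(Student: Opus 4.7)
The plan is to reduce the entire statement to Theorem \ref{CGBintro}(b), applied to the bundle $\pi^*E \to \overline{DE}$ with connection $\pi^*\nabla$ and the tautological section $s^\tau(b,v) = v$. The hypotheses are met: $\overline{DE}$ is a compact manifold with boundary $SE$; $s^\tau$ is transverse to the zero section (it restricts to the identity on each fiber); and $s^\tau$ is nowhere zero on $SE$. Since $(s^\tau)^{-1}(0)$ is precisely the zero section $B \hookrightarrow \overline{DE}$, part (b) gives that $\bom := (\Pf(\pi^*\nabla),\, -\TPf(\pi^*\nabla, s^\tau))$ is Lefschetz dual to $B$ in $H_{\dim B}(\overline{DE})$, i.e.\ $\mathscr{L}_I(\bom) = [B]$, which settles the first assertion.

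For the explicit form of $\nu^{-1}$, I would define
\[ \Phi(\eta) := (\Pf(\pi^*\nabla) \wedge \pi^*\eta,\, -\TPf(\pi^*\nabla, s^\tau) \wedge \iota^*\pi^*\eta) \]
and verify that $\Phi$ is a chain map between the absolute complex of $B$ and the mapping cone complex of $(\overline{DE}, SE)$. Closedness of $\Phi(\eta)$ when $d\eta = 0$ reduces to $d\Pf = 0$ together with the transgression identity $d\TPf = \iota^*\Pf$ (which encodes $d\bom = 0$); exactness is witnessed by the explicit primitive $\Phi(d\alpha) = d(-\Pf \wedge \pi^*\alpha,\, \TPf \wedge \iota^*\pi^*\alpha)$, a direct Leibniz computation. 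To identify $\Phi$ with $\nu^{-1}$, I would pair $\mathscr{L}_I(\Phi(\eta))$ against a test form $\zeta$ on $\overline{DE}$ and recognize the result as $\mathscr{L}_I(\bom)(\pi^*\eta \wedge \zeta) = [B](\pi^*\eta \wedge \zeta) = \int_B \eta \wedge \iota_B^*\zeta$, where $\iota_B: B \hookrightarrow \overline{DE}$ denotes the zero section; in other words, $\mathscr{L}_I \circ \Phi = \iota_{B*} \circ PD$. Combined with the isomorphism from Theorem \ref{LefDuintro} and the fact that $\iota_B$ is a homotopy equivalence, this forces $\Phi = \nu^{-1}$.

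Finally, to identify $\nu$ with fiber integration I would compute $\nu(\Phi(\eta))$ via the projection formula. Since $\Pf(\pi^*\nabla) = \pi^*\Pf(\nabla)$ is pulled back from $B$, its fiber integral over the $2k$-dimensional disk vanishes, leaving $\nu(\Phi(\eta)) = -\bigl(\int_{SE/B}\TPf\bigr) \wedge \eta$. It then suffices to show $\int_{SE/B}\TPf \equiv -1$; this is Theorem \ref{CGBintro}(b) applied \emph{fiberwise}, for on each fiber $\overline{DE}_b \cong \overline{D}^{2k}$ the restricted connection is trivial (as $\pi|_{\overline{DE}_b}$ is constant) and $s^\tau$ restricts to the identity of the ball with unique zero at the origin, so the theorem yields $\int_{\overline{D}^{2k}}\Pf - \int_{S^{2k-1}}\TPf = 1$; since $\int_{\overline{D}^{2k}}\Pf = 0$ (trivial curvature), we conclude $\int_{S^{2k-1}}\TPf = -1$. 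The main obstacle is the sign bookkeeping in the mapping cone complex together with the verification that $\TPf$ is natural under pullback, so that its restriction to a fiber coincides with the transgression of the restricted connection; beyond these technicalities, the argument is a direct application of Theorem \ref{CGBintro} and the projection formula.
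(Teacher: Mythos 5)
Your proposal is correct and takes essentially the same route as the paper: you apply Theorem \ref{CGBintro}(b) to $\pi^*E\to\overline{DE}$ with the tautological section to get the Lefschetz duality statement, and then verify $\nu\circ\nu^{-1}=\mathrm{id}$ using that $\int_{\overline{DE}/B}\Pf(\pi^*\nabla)\wedge\pi^*\eta=0$ and that $\int_{SE/B}\TPf(\pi^*\nabla,s^{\tau})=-1$, the latter obtained fiberwise, which is exactly the paper's suggested alternative of applying Chern--Gauss--Bonnet to the unit ball. Your intermediate check that $\mathscr{L}_I\circ\nu^{-1}=\iota_{B*}\circ\PD$ is a harmless addition (it amounts to the commutativity of the right square of diagram (\ref{eq6.2}), which the paper asserts separately), and together with $\nu\circ\Phi=\mathrm{id}$ it closes the argument just as the paper does.
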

 
 When the rank of $\pi:E\ra B$ is  $2k-1$, the following construction can be used. Consider the bundle $\bR \oplus\pi^*E\ra S(\bR\oplus E)$.\footnote{We denote by $\pi$ all the projections of the fiber bundles $E$, $\overline{DE}$, $SE$ and $S(\bR\oplus E)$ to $B$.} It comes with two non-vanishing sections, the obvious one $(1,0)$ and the tautological one $s^{\tau}$. Taking the transgression of $\nabla^1:=d\oplus \pi^*\nabla$ and $\nabla^2:=\nabla^{\tau}\oplus \nabla^{\tau^{\perp}}$, where the later connection is determined by orthogonally projecting $\pi^*\nabla\oplus d$ onto the tautological bundle and its complement, we get a closed form $\TPf(\nabla^1,\nabla^2)\in \Omega^{2k-1}(S(\bR\oplus E))$. By using the inverse of the stereographic projection in every fiber we can "transfer" it to a closed form $\TPf(\nabla^1,\nabla^2)\in\Omega^{2k-1}(\overline{DE})$.  
 
 We contend that along the equator $SE\hookrightarrow S(E\oplus\bR)$, the form $\TPf(\nabla^1,\nabla^2)$ is exact. In fact, along the equator the sections $(1,0)$ and $s^{\tau}$ are orthogonal and hence they span a \emph{trivializable} plane bundle $\mathscr{P}$ at every point. We therefore get another splitting of $\bR \oplus\pi^*E\bigr|_{SE}$ into $\mathscr{P}$ and its orthogonal complement and therefore a third connection $\nabla^3$ resulting by taking $d\oplus \nabla^{\mathscr{P}^{\perp}}$ on $\mathscr{P}\oplus \mathscr{P}^{\perp}$. Here, the connection $\nabla^{\mathscr{P}^{\perp}}$ results by orthogonally projecting $d\oplus \pi^*\nabla$. 
 
 We use the available three connections to produce a secondary transgression class \linebreak $\TPf(\nabla^1,\nabla^2,\nabla^3)\in \Omega^{2k-2}(SE)$ such that 
 \[ d\TPf(\nabla^1,\nabla^2,\nabla^3)=-\TPf(\nabla^1,\nabla^2).
 \]
 The way this works is by considering the $2$-dimensional simplex $\Delta^2$ and the auxiliary connection $\tilde{\nabla}$ on the bundle $\bR\oplus p_2^*E\ra \Delta^2\times B$:
 \[ \tilde{\nabla}=\frac{d}{ds}+\frac{d}{dt}+\nabla^1+s(\nabla^2-\nabla^1)+t(\nabla^3-\nabla^1).
 \]
 We denoted $p_2:\Delta^2\times B\ra B$ the obvious projection. Then by definition:
 \[ \TPf(\nabla^1,\nabla^2,\nabla^3)=\int_{\Delta^2}\Pf(\tilde{\nabla}).
 \]
 The main result of this note is:
 \begin{theorem}\label{Th1.4} If $E\ra B$ has odd rank, then the pair $(-\TPf(\nabla^1,\nabla^2), -\TPf(\nabla^1,\nabla^2,\nabla^3))\in H^{2k-1}(\overline{DE},SE)$ is Lefschetz dual to the zero section $B\hookrightarrow \overline{DE}$ and consequently the isomorphism $\nu^{-1}$ in this case is
 \[  \eta\ra (-\TPf(\nabla^1,\nabla^2)\wedge \pi^*\eta,-\TPf(\nabla^1,\nabla^2,\nabla^3)\wedge \pi^*\eta).
 \]
 \end{theorem}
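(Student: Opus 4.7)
\textit{Proof plan.} The statement consists of two assertions: the pair $(-\TPf(\nabla^1,\nabla^2),-\TPf(\nabla^1,\nabla^2,\nabla^3))$ is a cocycle in the mapping-cone complex $\Omega^*(\overline{DE},SE)$, and under $\mathscr{L}_I$ it represents the homology class of the zero section $B\hookrightarrow\overline{DE}$. I plan to proceed in three steps.

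\textbf{Closedness.} Using $d(\omega,\gamma)=(-d\omega,\iota^*\omega+d\gamma)$, I check separately that $\TPf(\nabla^1,\nabla^2)$ is closed on $\overline{DE}$ and that the boundary identity $\iota^*\TPf(\nabla^1,\nabla^2)=-d\TPf(\nabla^1,\nabla^2,\nabla^3)$ holds on $SE$. The first is immediate from $\Pf(\nabla^1)=\Pf(\nabla^2)=0$, which holds because each of $\nabla^1$ and $\nabla^2$ preserves a rank-one summand. The second is precisely the defining property of the secondary transgression displayed in the paragraph preceding the theorem, obtained by applying Stokes to $\int_{\Delta^2}\Pf(\tilde\nabla)$ after noticing that the auxiliary edge transgressions $\TPf(\nabla^1,\nabla^3)$ and $\TPf(\nabla^2,\nabla^3)$ on $SE$ cancel because of the trivialisation of $\mathscr{P}$.

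\textbf{Reduction to a fibrewise identity.} Because $\iota_B\colon B\hookrightarrow\overline{DE}$ is a deformation retract with retraction $\pi$, the pushforward $\pi_*\colon H_n(\overline{DE})\to H_n(B)$ is an isomorphism sending $[B]$ to the fundamental class of $B$. Representing arbitrary cohomology classes on $\overline{DE}$ by pullbacks $\pi^*\beta$ and invoking the projection formula, the equality $\mathscr{L}_I(\text{pair})=[B]$ becomes the scalar identity
\[
\int_{\overline{DE}/B}\TPf(\nabla^1,\nabla^2)+\int_{SE/B}\TPf(\nabla^1,\nabla^2,\nabla^3)=-1
\]
in $H^0(B)$; equivalently it asserts that the composition of the proposed $\nu^{-1}$ with the fibre integration $\nu$ of Theorem~\ref{Th1.3} is the identity on $H^*(B)$.

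\textbf{The fibrewise identity.} I apply Theorem~\ref{Th1.3} to the even-rank bundle $\bR\oplus E\to B$. Since $\Pf(d\oplus\pi^*\nabla)=0$, the associated pair collapses to $(0,-\TPf(\nabla^1,\nabla^2))$ on $(\overline{D(\bR\oplus E)},S(\bR\oplus E))$, which yields the universal formula
\[
\int_{S(\bR\oplus E)/B}\TPf(\nabla^1,\nabla^2)=-1.
\]
Under inverse stereographic projection $\overline{DE}$ is identified with one hemisphere of $S(\bR\oplus E)$; denote by $H$ the opposite hemisphere. The plane $\mathscr{P}=\mathrm{span}((1,0),s^\tau)$ extends naturally to $S(\bR\oplus E)$ minus the two polar sections, and with it extends the secondary transgression to a form $\widetilde{\TPf}$ on that complement, still satisfying $d\widetilde{\TPf}=-\TPf(\nabla^1,\nabla^2)$. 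Stokes on $H$ minus a small neighbourhood of its polar section then expresses $\int_{H/B}\TPf(\nabla^1,\nabla^2)$ as $\int_{SE/B}\TPf(\nabla^1,\nabla^2,\nabla^3)$ (with the appropriate orientation sign coming from $\partial H$ versus $\partial\overline{DE}$) plus a polar residue; combined with the hemisphere $\overline{DE}$ contribution and the universal $-1$, this gives the desired identity.

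\textbf{Main obstacle.} The principal technical difficulty is the computation of the polar residue, where $\mathscr{P}$ degenerates to a line and $\widetilde{\TPf}$ becomes singular. This amounts to a local Euler-class evaluation on the normal $(2k-2)$-sphere bundle of the polar section; matching this local contribution against the global normalisation coming from Theorem~\ref{Th1.3} requires careful bookkeeping of signs and orientations arising from the stereographic identification.
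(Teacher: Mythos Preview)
Your reduction in Step 2 is sound: the commutativity of the right square in diagram (\ref{eq6.2}) shows that proving $\mathscr{L}_I$ of the pair equals $[B]$ is equivalent to proving $\nu$ of the pair equals $1\in H^0(B)$, i.e.\ the fibrewise identity you wrote down. This is a legitimate and more direct route than the paper's, which instead lifts everything to a second sphere bundle $P=S(\bR\oplus\pi^*E)\to\overline{DE}$, flows the tautological section with the height flow, and applies Theorem~\ref{transfor} to identify the limit current.

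The gap is in Step 3. Your claim that the extended secondary transgression $\widetilde{\TPf}$ satisfies $d\widetilde{\TPf}=-\TPf(\nabla^1,\nabla^2)$ on $S(\bR\oplus E)$ minus the poles is false. By Lemma~\ref{SSl} one always has
\[
-d\TPf(\nabla^1,\nabla^2,\nabla^3)=\TPf(\nabla^1,\nabla^2)+\TPf(\nabla^2,\nabla^3)+\TPf(\nabla^3,\nabla^1),
\]
and the vanishing of the last two terms (Lemma~\ref{perspars}) is special to the equator $SE$. Off the equator, $(1,0)$ remains parallel for both $\nabla^2$ and the extended $\nabla^3$, so $\TPf(\nabla^2,\nabla^3)=0$ still holds; but $s^{\tau}$ is \emph{not} parallel for the extended $\nabla^3$ (in the trivialisation by $(1,0)$ and (\ref{eq7.1}) the coefficients of $s^\tau$ vary with the height $t$), so $\TPf(\nabla^3,\nabla^1)\neq 0$. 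Your Stokes argument on the complementary hemisphere $H$ therefore acquires an additional term $\int_{H/B}\TPf(\nabla^3,\nabla^1)$ on top of the polar residue you already flagged.

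These are precisely the two obstructions the paper has to overcome as well, and it does so by (i) blowing up the poles so that the secondary transgression extends smoothly and the would-be residue is seen to vanish (all three connections acquire a common parallel section on the exceptional divisor), and (ii) invoking the symmetry argument of Proposition~\ref{trandsym} to kill the $\TPf(\nabla^3,\nabla^1)$ contribution. Your plan could plausibly be repaired along the same lines---the symmetry $t\mapsto -t$ on $S(\bR\oplus E)$ should control $\int_{H/B}\TPf(\nabla^3,\nabla^1)$, and a blow-up should tame the pole---but as written Step~3 does not go through, and once you add those ingredients the argument is no longer more elementary than the paper's.
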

 
 When combining Theorems \ref{Th1.3} and \ref{Th1.4} with the explicit form of the isomorphism $\mu$ one gets concrete representatives for Thom forms with compact support (and consequently concrete Poincar\'e duals for compact oriented submanifolds), both in the even and in the odd cases (see Corollary \ref{Niccor} for details) a feature that was already known to Nicolaescu \cite{Ni} in the even case. It is necessary mentioning that this type of result is also a byproduct of the theory of singular connections in \cite {HL2} developed by Harvey and Lawson.

 We comment about  connections  to previous work. The proof of Theorem \ref{LefDuintro} reduces easily to the statement that the pairing
 \[  \Omega_D^k(M,\partial M)\times \Omega^{n-k}(M),\qquad (\omega,\eta)\ra \int_M\omega\wedge\eta,
 \] 
 descends to a non-degenerate bilinear map in cohomology. We used  $\Omega_D(M,\partial M)$ to denote forms in $M$ whose pull-back to $\partial M$ is zero. This statement seems to have been known for a long time. Nevertheless, the reader will find here two proofs, one that uses the Hodge-Morrey-Friedrichs decomposition as in \cite{Sch} and one which is more elementary.
 
 In  Section 6 of \cite{BC}, Bott and Chern give a slight extension of the result from \cite{Ch2} to the case when the  rank of the bundle equals the dimension of the manifold (see Proposition 6.3 from \cite{BC}). This is  a particular case of Theorem \ref{CGBintro} (b) above.
 
 A form of Lefschetz-deRham Duality  was considered by Harvey and Lawson in \cite{HL1} as a byproduct of their analysis  
 on flowing forms via Morse-Stokes vector fields. Their main result in this direction is a homotopy equivalence between the (Dirichlet) relative deRham complex and a certain subcomplex of $\mathscr{D}'_*$ generated by a finite number of rectifiable submanifolds (stable submanifolds of the flow). They  again use the Dirichlet-deRham complex \cite{HL3} when  extending their own work on Federer-deRham differential characters to manifolds with boundary. There are several differences between \cite{HL1,HL3} and what is exposed here. The main novelty of our approach  seems to be the systematic use of the mapping cone both at the level of cohomology as well as at the level of homology. 
 
  The proofs of Theorem \ref{CGBintro} and of Theorem \ref{Th1.4} rely on computing certain limits of $1$-parameter families of forms in the flat topology of currents. Notice that the way we do that here (e.g. the demonstration of Theorem \ref{transfor})  can be traced back to the techniques that Harvey and Lawson  introduced.

 In \cite{Ni}, Nicolaescu gives a proof of the Chern-Gauss-Bonnet Theorem for manifolds without boundary in which he uses an explicit Thom form with compact support. The reader can encounter some of those ideas in the construction of the isomorphism $\mu$\footnote{For the definition of $\mu$ see (\ref{eq6.4}).} of (\ref{eqintro}) and  as a leitmotiv for Theorems \ref{Th1.3} and  \ref{Th1.4}. 
 
 We would like to highlight one more point.  We have paid special attention to signs in this note and painstainkingly worked to choose the "best fitting" conventions.  This is one reason we preferred to verify in minute detail certain claims as for example describing all arrows in the commutative ladders (\ref{diag2}) and (\ref{eq6.2}).  
 
 \smallskip
 \noindent
   \emph{Acknowledgements:} I would like to thank Vincent Grandjean, Luciano Mari,  and Diego Moreira for many enlightening discussions. I am also grateful for the  working environment and support in the Mathematics Department of the UFC, made possible by Greg\'orio Pacelli Bessa, Jorge de Lira and  Eduardo Teixeira, among others.

  \section{Lefschetz-de Rham Duality }
We start with some basic facts. Let $M$ be an oriented manifold of dimension $n$ without boundary. Let $(\Omega^*(M),d)$ be the (co)chain complex of smooth forms of degree $k$ on $M$.  

 The deRham version of  Poincar\'e theorem  presents itself as an isomorphism:
\begin{equation}\label{eq11} H^{n-k}(\Omega^*(M))\simeq (H^k(\Omega^*_{\cpt}(M)))^*,
\end{equation}
where the vector space $\Omega^*_{\cpt}(M)$ is the  chain complex  of compactly supported smooth forms. We will use the standard notation $H^*(M), \; H^*_{\cpt}(M)$ for the groups appearing in (\ref{eq11}).  The isomorphism is induced by the following well-defined, non-degenerate, bilinear pairing:
\[ H^{n-k}(M)\times H_{\cpt}^k(M)\ra \bR,\qquad ([\omega],[\eta])\ra \int_M\omega\wedge\eta.
\]
One can interpret the group $(H_{\cpt}^k(M))^*$ as a homology group is as follows. Let $(\mathscr{D}'_*(M),d)$ be the dual chain complex, where $\mathscr{D}'_k$,  the space of currents of dimension $k$, is the topologically dual vector space of $\Omega^k_{\cpt}(M)$. The differential $d$\footnote{We prefer $d$ rather than $\partial$ to avoid confusion later due to the overuse of the latter symbol.} for currents is:
\[ d:\mathscr{D}'_{k}(M)\ra \mathscr{D}'_{k-1}(M),\qquad dT(\eta):=T(d\eta).
\]
We denote by $H_k(M,\mathscr{D}_*')$ the $k$-th homology group of this chain complex. A short argument\footnote{Repeat the argument in Lemma \ref{HMD}.} based on the Hahn-Banach Theorem shows that there exist a natural isomorphism $(H_{\cpt,\dR}^k(M))^*\simeq H_k(M,\mathscr{D}_*')$. Poincar\'e Duality is therefore :
\[ H^{n-k}_{\dR}(M)\simeq H_k(M,\mathscr{D}_*').
\] 
Now deRham isomorphism implies that for a compact manifold, $H_k(M,\mathscr{D}_*')$ is isomorphic indeed with the singular $k$-th homology group of $M$ with real coefficients. In the non-compact case, $H_k(M,\mathscr{D}_*')$ is the Borel-Moore homology (see \cite{BM}).

On compact manifolds with boundary $M$, the counterpart of Poincar\'e duality is Lefschetz duality (see \cite{Br}). This time one has two dualities: 
 \[ H^{k}_{\sing}(M,\partial M)\simeq H_{n-k,\sing}(M),\qquad H^k_{\sing}(M)\simeq H_{n-k,\sing}(M,\partial M)\]
Moreover, up to sign, the following ladder is commutative up to some universal signs (see Theorem 9.2 in \cite{Br}):
 \[ \xymatrix{\ar[r]^{\qquad\;}&H^k(M) \ar[r]^{}\ar[d] \ar@{}| {}^{\quad}[dr] & H^k(\partial M) \ar[r]^{\;\;\;\;} \ar[d]  \ar@{}| {}^{\;\; \;\;}[dr]& H^{k+1}(M,\partial M)\ar[r]^{\;\;\;} \ar[d] \ar@{}| {\;\;}^{\;\; \;\;\;\;}[dr] & H^{k+1}(M) \ar[d]\ar[r]^{\qquad}&\\
\ar[r]^{\qquad\quad}&H_{n-k}(M,\partial M)\ar[r]^{} &H_{n-k-1}(\partial M) \ar[r]^{} & H_{n-k-1}(M)\ar[r]^{\quad} & H_{n-k-1}(M,\partial M)\ar[r]^{\qquad \;\;\;\;}& }
 \]
The vertical arows are the duality isomorphisms. All groups represent singular homology/cohomology.

 If one turns to a deRham point of view, there are more than two ways  for introducing $H^k(M,\partial M)$. First, one has:
\[ \Omega^k_D(M, \partial M):=\{\omega\in\Omega^k(M)~|~\omega\bigr|_{\partial M}\equiv 0\}.
\]
The subscript $D$ is meant to suggest Dirichlet boundary conditions as they are  usually called (\cite{Sch}). It is easy to infer from the short exact sequence of chain complexes
\[ 0\ra \Omega^*_D(M,\partial M)\ra \Omega^*(M)\stackrel{\iota^*}{\ra} \Omega^*(\partial M)\ra 0
\]
that $H^k( \Omega^*_D(M,\partial M))$ plays indeed the role of relative cohomology. 
\begin{remark}
 It is also not  too hard to see that the topological dual of this space is isomorphic with $\mathscr{D}_k'(M)/\mathscr{D}_k'(\partial M)$, a space seemingly not too friendly to work with.  \end{remark}
 
 Second, one can define the relative deRham cohomology groups $H^k_{\dR}(M,\partial M)$ as the cohomology groups of the homological mapping cone for the pull-back map \linebreak $\iota^*:\Omega^*(M)\ra \Omega^*(\partial M)$. This is the chain complex:
 \[\Omega^{*}(M,\partial M):=\Omega^{*}(M)\oplus \Omega^{*-1}(\partial M)
 \]
 with differential
\begin{equation}\label{deq1}\Omega^{k}(M,\partial M)\ra \Omega^{k+1}(M,\partial M),\qquad (\omega,\gamma)\ra (-d\omega, \iota^*\omega+d\gamma)
\end{equation}
The functional dual to $\Omega^k(M,\partial M)$ is  another space of pairs:
 \[\mathscr{D}'_k(M,\partial M)=\mathscr{D}'_k(M)\oplus\mathscr{D}'_{k-1}(\partial M)\] 
 We define a  differential on it that turns it into a chain complex:
\begin{equation}\label{deq2} d(T,S)=(\iota_*S-dT,dS).
\end{equation}
\begin{example}
Notice that, with this definition of chain differential, an oriented, compact submanifold $N$ with boundary $\partial N\subset \partial M$ considered as a pair $(N,\partial N)\hookrightarrow (M,\partial M)$ naturally determines a closed pair
\[([N], [\partial N])\in \mathscr{D}'_k(M, \partial M ),\]
 where $k=\dim{N}$.
 \end{example}
 \begin{remark}\label{pd} One can define in a similar manner a chain complex $\mathscr{D}'_*(M,N)$ for any manifold pair $(M,N)$ and indeed for any smooth map $\iota:N\ra M$ not necessarily an embedding. For a smooth map of pairs $A:(M_1,N_1)\ra (M_2,N_2)$ define the push-forward:
 \[ A_*(T,S):=(A_*T,A_*S). 
 \]
 It is trivial to check that push-forward commutes with $d$.
 \end{remark}

Define now the following pairing:
\begin{equation}\label{pair1} \mathscr{L}:\Omega^k(M,\partial M)\times \Omega^{n-k}(M)\ra \bR,\qquad (\omega,\gamma;\eta)\stackrel{\mathscr{L}}{\ra} \int_M\omega\wedge\eta+\int_{\partial M} \gamma\wedge\iota^*\eta.
\end{equation}
Notice that this pairing induces  a continuous and injective map:
\[ \mathscr{L}_I:\Omega^k(M,\partial M)\ra \mathscr{D}_{n-k}'(M), \qquad \mathscr{L}_I(\omega,\gamma) = \{\eta\ra\mathscr{L}_{(\omega,\gamma)}(\eta)\};
\]
\begin{remark} In terms of operations with currents:
 \[\mathscr{L}_I(\omega,\gamma)=\omega+\iota_*\gamma,\]
 where on the r.h.s. one understands the push-forward of the current represented by $\gamma$.
 \end{remark}
 
In a similar vein one defines:
\[ \mathscr{L}_{II}:\Omega^{n-k}(M)\ra \mathscr{D}_{k}'(M,\partial M),\quad \mathscr{L}_{II}(\eta)=\left(\omega\ra \int_{M}\omega\wedge\eta\;;\;\gamma\ra \int_{\partial M} \gamma\wedge\iota^*\eta \right).
\]

The pairing (\ref{pair1}) is compatible with the chain differentials by which we mean that $\mathscr{L}_I$  commutes  with $d$ up to a sign. As a matter of fact,  Stokes Theorem  implies for $\deg{\omega}=k$:
\begin{equation}\label{eq2} \int_M-d\omega\wedge \eta+\int_{\partial M}(\iota^*\omega+d\gamma)\wedge\iota^*\eta=(-1)^{k}\left(\int_M\omega\wedge d\eta+\int_{\partial M} \gamma\wedge d\iota^*\eta\right)
\end{equation}
which proves that 
\begin{equation}\label{eq2'}\mathscr{L}_I(d(\omega,\gamma))=(-1)^{k}d\mathscr{L}_I(\omega,\gamma).\end{equation}
 We stress that we use the adjoint/dual of exterior differentiation without any sign attached to it as the differential on the spaces of currents $\mathscr{D}'_*(M)$. We break (\ref{eq2}) into two identities for $\deg{\eta}=n-k-1$.
 \begin{equation}\label{eq3}\int_{\partial M}\iota^*\omega\wedge\iota^*\eta- \int_Md\omega\wedge\eta =(-1)^{k}\int_M\omega\wedge d\eta \;\; \mbox{and}
 \end{equation}
 \begin{equation}\label{eq4} \int_{\partial M}d\gamma\wedge\iota^*\eta=(-1)^{k}\int_{\partial M}\gamma\wedge d\iota^*\eta
 \end{equation}
which together say that
\[ d\mathscr{L}_{II}(\eta)=(-1)^{k}\mathscr{L}_{II}(d\eta).
\]

A consequence of the compatibility relation  (\ref{eq2'}) is the good definition of  the pairing:
\begin{equation}\label{lefpa} H^k_{\dR}(M,\partial M)\times H^{n-k}_{\dR}(M)\ra \bR,\qquad ([\omega,\gamma],[\eta])\ra \mathscr{L}(\omega,\gamma;\eta)
\end{equation}
The following is a first example of Lefschetz Duality.

\begin{theorem}\label{LefDu} The pairing (\ref{lefpa}) is non-degenerate.
\end{theorem}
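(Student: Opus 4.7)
The plan is to reduce the statement for the mapping-cone model of relative cohomology to the corresponding well-known statement for the Dirichlet model $\Omega^*_D(M,\partial M)$ mentioned explicitly in the excerpt. Concretely, I would prove (i) the natural inclusion $j:\Omega^*_D(M,\partial M)\hookrightarrow \Omega^*(M,\partial M)$ given, up to a sign $(-1)^k$ in degree $k$ to match the mapping-cone sign in (\ref{deq1}), by $\omega\mapsto ((-1)^k\omega,0)$ is a quasi-isomorphism, and (ii) under $j$ the pairing (\ref{lefpa}) reduces, up to sign, to the classical wedge pairing
\[
H^k(\Omega^*_D(M,\partial M))\times H^{n-k}_{\dR}(M)\ra \bR,\qquad ([\omega],[\eta])\mapsto \int_M\omega\wedge\eta,
\]
whose non-degeneracy is the statement I would then prove directly.

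\textbf{Step 1: Quasi-isomorphism.} I would apply the five lemma to the morphism of long exact sequences induced by the two short exact sequences
\[
0\ra \Omega^*_D(M,\partial M)\ra \Omega^*(M)\stackrel{\iota^*}{\ra} \Omega^*(\partial M)\ra 0,
\]
\[
0\ra \Omega^{*-1}(\partial M)\stackrel{i}{\ra} \Omega^*(M,\partial M)\stackrel{p}{\ra} \Omega^*(M)\ra 0,
\]
where $i(\gamma)=(0,\gamma)$ and $p(\omega,\gamma)=(-1)^k\omega$. The map $j$ and the identities on $\Omega^*(M)$ and $\Omega^*(\partial M)$ yield a ladder of the two LES's; the connecting homomorphism in the Dirichlet LES sends $[\alpha]\mapsto [d\tilde\alpha]$ for an extension $\tilde\alpha$, while the mapping-cone connecting homomorphism sends $[\alpha]\mapsto [(0,\alpha)]$. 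The computation $d(-\tilde\alpha,0)=(d\tilde\alpha,-\alpha)$ (up to the sign convention) shows these agree in $H^{k+1}(M,\partial M)$, so the squares involving the connecting maps commute. The outer squares commute trivially, so the five lemma gives the quasi-isomorphism.

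\textbf{Step 2: Transfer of the pairing.} The definition of $\mathscr{L}$ gives immediately
\[
\mathscr{L}(j(\omega);\eta)=\int_M (-1)^k\omega\wedge\eta+\int_{\partial M}0\wedge\iota^*\eta=(-1)^k\int_M\omega\wedge\eta.
\]
So non-degeneracy of the two pairings is equivalent.

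\textbf{Step 3: Non-degeneracy of the Dirichlet wedge pairing.} Fix a Riemannian metric on $M$. By the Hodge-Morrey-Friedrichs decomposition (\cite{Sch}), every class in $H^k(\Omega^*_D(M,\partial M))$ has a unique representative by a Dirichlet-harmonic form, i.e.\ a form $\omega$ with $d\omega=0$, $d^*\omega=0$, and $\iota^*\omega=0$; every class in $H^{n-k}_{\dR}(M)$ has a unique representative by a Neumann-harmonic form. The Hodge star maps Dirichlet-harmonic $k$-forms bijectively to Neumann-harmonic $(n-k)$-forms. Given a non-zero $[\omega]\in H^k(\Omega^*_D)$ represented by a Dirichlet-harmonic $\omega\neq 0$, the class $[*\omega]\in H^{n-k}_{\dR}(M)$ satisfies
\[
\int_M\omega\wedge *\omega=\|\omega\|^2_{L^2}>0,
\]
proving non-degeneracy on the first factor; a symmetric argument using $*\eta$ for a Neumann-harmonic representative handles the second factor.

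\textbf{Main obstacle and alternative.} The genuinely delicate points are (a) choosing compatible sign conventions so that $j$, $i$, $p$ are honest chain maps and the ladder of LES's commutes; and (b) invoking the Hodge-Morrey-Friedrichs theory correctly, in particular the fact that $*$ intertwines Dirichlet and Neumann boundary conditions. As the excerpt advertises an elementary alternative, I would in parallel develop the proof without Hodge theory, using a collar $\partial M\times[0,\varepsilon)$ to reduce via Mayer-Vietoris to the boundaryless Poincar\'e duality (which gives non-degeneracy on the interior) plus an analysis of the relative part near the collar, so that the ladder of Lefschetz pairings intertwines with the absolute Poincar\'e pairings on $M\setminus\partial M$ and $\partial M$, after which the five lemma again concludes.
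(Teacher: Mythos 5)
Your proposal is correct and takes essentially the same route as the paper: reduce to the Dirichlet complex via the quasi-isomorphism $\omega\mapsto(\omega,0)$ (the paper's Lemma \ref{drel}, which is proved there by a direct extension argument rather than your five-lemma ladder, though both work) and then deduce non-degeneracy of $\int_M\omega\wedge\eta$ from the Hodge--Morrey--Friedrichs decomposition together with the fact that the Hodge star exchanges Dirichlet and Neumann harmonic fields. Your sketched ``elementary alternative'' corresponds roughly to the paper's second proof in Section \ref{LDbcodim}, which uses the extension-by-zero quasi-isomorphism from $\Omega^*_{\cpt}(M\setminus\partial M)$ and Poincar\'e duality on the open manifold $M\setminus\partial M$ rather than a Mayer--Vietoris argument.
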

\begin{proof}  First, by Lemma \ref{drel} the map:
\[ \Omega_D^*(M, \partial M)\hookrightarrow \Omega^*(M,\partial M),\qquad \omega\ra (\omega,0) \]
induces an isomorphism in cohomology.  

Second, the pairing:
\[\overline{ \mathscr{L}}:H^k(\Omega_D^*(M,\partial M))\times H^{n-k}_{\dR}(M)\qquad \overline{\mathscr{L}}([\omega],[\eta])= \int_M\omega\wedge\eta
\]
is non-degenerate.  At this point,  the  Hodge-Morrey-Friederichs decomposition theorem on  a manifold with boundary as developed in \cite{Sch} proves useful.  In particular, Theorem 2.6.1 in \cite{Sch}  saya that every absolute deRham cohomology class in $M$ can be represented by a harmonic field with Neuman boundary conditions, i.e. a form $\omega$ which satisfies:
\[ d\omega=0,\;\;\quad d^*\omega=0,\;\;\quad  \iota_{\nu}\omega=0,
\]
where $\nu$ is the unit  exterior normal  and $\iota_{\nu}$ represents contraction. Similarl,y every relative deRham cohomology class in $H^k(\Omega_D^*(M,\partial M))$ can be represented by a harmonic field with Dirichlet boundary conditions\footnote{This is a third point of view on deRham relative cohomology.}, i.e. a form $\omega$ which satisfies:
\[ d\omega=0,\;\;\quad d^*\omega=0,\;\; \quad \iota^*\omega=0.
\]
Moreover the Hodge operator $*$ is an isomorphism between the two groups of harmonic fields (Corollary 2.6.2, which can be considered  yet another form of Lefschetz duality) and therefore if 
\[ \overline{ \mathscr{L}}([\omega],[\eta])=0\qquad \forall [\eta]
\]
then taking $\omega$ to be a harmonic field as above and $\eta=*\omega$ one gets that $\|\omega\|^2=0$ and hence $\omega=0$.  Analogously, if $\overline{ \mathscr{L}}([\omega],[\eta])=0$ for all $[\omega]$ then $[\eta]=0$.
\end{proof}
\begin{remark} In Section \ref{LDbcodim}, we give another proof of Theorem \ref{LefDu}, one that does not use the Hodge-Morrey-Friedrichs decomposition.
\end{remark}
\begin{lemma}\label{drel} The natural map:
\begin{equation}\label{eq1234} \Omega_D^*(M,\partial M)\hookrightarrow \Omega^*(M,\partial M),\qquad \omega\ra (\omega,0) \end{equation}
induces an isomorphism in cohomology.
\end{lemma}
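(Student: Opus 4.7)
The plan is to prove the lemma by explicit manipulation at the cochain level, using only the elementary fact that every form on $\partial M$ extends to a form on $M$ (pick a collar neighborhood of $\partial M$, pull back via the projection, and multiply by a bump function supported in the collar). An alternative would be to set up the two short exact sequences of cochain complexes
$$0\to\Omega_D^*(M,\partial M)\to\Omega^*(M)\xrightarrow{\iota^*}\Omega^*(\partial M)\to 0$$
and
$$0\to\Omega^{*-1}(\partial M)\to\Omega^*(M,\partial M)\to\Omega^*(M)\to 0,$$
verify that (\ref{eq1234}) induces a morphism between the two associated long exact sequences which is the identity on $H^*(M)$ and on $H^*(\partial M)$, and then invoke the Five Lemma. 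I prefer the direct argument below since it produces explicit primitives; bookkeeping of signs in the Five Lemma route would be the only real nuisance to avoid.

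For surjectivity, let $(\omega,\gamma)\in\Omega^k(M,\partial M)$ be closed; by (\ref{deq1}) this means $d\omega=0$ and $\iota^*\omega=-d\gamma$. Extend $\gamma$ to $\alpha\in\Omega^{k-1}(M)$ with $\iota^*\alpha=\gamma$. Then, again by (\ref{deq1}),
$$(\omega,\gamma)-d(\alpha,0)=(\omega,\gamma)-(-d\alpha,\iota^*\alpha)=(\omega+d\alpha,\,0).$$
Setting $\omega':=\omega+d\alpha$ gives $\iota^*\omega'=\iota^*\omega+d\iota^*\alpha=-d\gamma+d\gamma=0$, so $\omega'\in\Omega_D^k(M,\partial M)$ is closed, and its image under (\ref{eq1234}) is cohomologous to $(\omega,\gamma)$.

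For injectivity, assume $\omega\in\Omega_D^k(M,\partial M)$ is closed and that $(\omega,0)=d(\alpha,\beta)$ for some $(\alpha,\beta)\in\Omega^{k-1}(M)\oplus\Omega^{k-2}(\partial M)$. Unpacking (\ref{deq1}), this is equivalent to $\omega=-d\alpha$ and $\iota^*\alpha=-d\beta$. Pick an extension $\tilde\beta\in\Omega^{k-2}(M)$ with $\iota^*\tilde\beta=\beta$ and set $\alpha':=\alpha+d\tilde\beta$. Then $d\alpha'=d\alpha$, so $\omega=-d\alpha'$, while $\iota^*\alpha'=\iota^*\alpha+d\iota^*\tilde\beta=-d\beta+d\beta=0$. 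Hence $\alpha'\in\Omega_D^{k-1}(M,\partial M)$ and $\omega$ is already a coboundary inside the Dirichlet subcomplex, as required.

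I foresee no genuine obstacle. The whole argument is a concrete instance of the general quasi-isomorphism between a subcomplex and the mapping cone of the corresponding quotient map, and the only analytic input is the trivial extension of smooth forms across a collar, which is the same ingredient needed to establish exactness of the first short sequence above.
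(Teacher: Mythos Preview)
Your proof is correct and follows essentially the same route as the paper's own argument: both extend the boundary form to all of $M$ and use it to adjust the given pair (for surjectivity) or the given primitive (for injectivity) into Dirichlet form, with only cosmetic differences in notation. Your aside about the Five Lemma alternative is accurate but not pursued in the paper either.
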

\begin{proof} The map is  well-defined because it anti-commutes with $d$.

We prove  injectivity first. Suppose $(\omega,0)=(-d\eta,\iota^*\eta+d\gamma)$ for some pair \linebreak$(\eta,\gamma)\in \Omega^{k-1}(M,\partial M)$. By a partition of unity argument, the form $\gamma$ admits an extension $\tilde{\gamma}\in\Omega^{k-2}(M)$ such that
 \begin{equation}\label{eq123} \iota^*\tilde{\gamma}=\gamma.\end{equation} Let
\[ \tilde{\eta}:=\eta+d\tilde{\gamma}.
\]
Notice that $\iota^*\tilde{\eta}=\iota^*\eta+d\iota^*\tilde{\gamma}=0$. Moreover $d\tilde{\eta}=d\eta=-\omega$. Hence $\omega$ is exact in $\Omega^k_D(M,\partial M)$.

 We prove surjectivity. Let $(\eta,\gamma)\in\Omega^{k}(M,\partial M)$ be a closed pair. We use again a form $\tilde{\gamma}\in \Omega^{k-1}(M)$ which satisfies (\ref{eq123}). Notice that $(-d\tilde{\gamma},\iota^*\tilde{\gamma})=d(\tilde{\gamma},0)$ is an exact pair and
\[ (\eta,\gamma)-(-d\tilde{\gamma},\iota^*\tilde{\gamma})=:(\omega,0),
\]
with $\omega$ satisfying
 \[\iota^*\omega=\iota^*\eta +d\iota^*\tilde{\gamma}=-d\gamma+d\iota^*\tilde{\gamma}=0.\] 
\end{proof}

The following is a straightforward consequence of Theorem \ref{LefDu}:
\begin{corollary} The map:
\[ H^k_{\dR}(M,\partial M)\ra H_{n-k}(\mathscr{D}'_*(M)),\qquad [\omega,\gamma]\ra \mathscr{L}_I(\omega,\gamma)
\]
is an isomorphism.
\end{corollary}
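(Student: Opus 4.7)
The plan is to realize $\mathscr{L}_I$ as one leg of a commutative triangle whose other two edges are already known to be isomorphisms. Concretely, for a closed pair $(\omega,\gamma)$ and a closed form $\eta\in\Omega^{n-k}(M)$ one has by definition
\[
\mathscr{L}_I(\omega,\gamma)(\eta)=\int_M\omega\wedge\eta+\int_{\partial M}\gamma\wedge\iota^*\eta=\mathscr{L}(\omega,\gamma;\eta),
\]
where the right-hand side is the pairing of Theorem \ref{LefDu}. Combined with the compatibility $\mathscr{L}_I\circ d=(-1)^k d\circ\mathscr{L}_I$ from (\ref{eq2'}), this shows $\mathscr{L}_I$ descends to a well-defined map in cohomology, and that the composition
\[
H^k_{\dR}(M,\partial M)\xrightarrow{\;\overline{\mathscr{L}_I}\;} H_{n-k}(\mathscr{D}'_*(M))\xrightarrow{\;\mathrm{ev}\;}\bigl(H^{n-k}_{\dR}(M)\bigr)^{*},
\]
where $\mathrm{ev}$ sends the class of a closed current $T$ to the functional $[\eta]\mapsto T(\eta)$, coincides with the map induced by $\mathscr{L}$.

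Since $M$ is compact, all de Rham groups in sight are finite dimensional. Non-degeneracy of $\mathscr{L}$ (Theorem \ref{LefDu}) then tells us that the composition $\mathrm{ev}\circ\overline{\mathscr{L}_I}$ is an isomorphism; in particular $\overline{\mathscr{L}_I}$ is injective and $\mathrm{ev}$ is surjective. It therefore suffices to prove that $\mathrm{ev}$ is also injective, which is the standard Hahn–Banach argument indicated in the introduction (the case of Lemma \ref{HMD}): if a closed current $T\in\mathscr{D}'_{n-k}(M)$ vanishes on every closed form of degree $n-k$, then the functional $d\alpha\mapsto T(\alpha)$ is well-defined and continuous on the subspace $d\Omega^{n-k}(M)\subset\Omega^{n-k+1}(M)$, and by Hahn–Banach extends to some current $S\in\mathscr{D}'_{n-k+1}(M)$; by construction $T=dS$, so $[T]=0$ in $H_{n-k}(\mathscr{D}'_*(M))$.

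Once injectivity of $\mathrm{ev}$ is secured, both maps in the triangle are isomorphisms and hence so is $\overline{\mathscr{L}_I}$. The main obstacle is really just the Hahn–Banach step, which is slightly delicate because one must verify that $d\alpha\mapsto T(\alpha)$ is truly well-defined on $d\Omega^{n-k}(M)$ (it is, using $dT=0$) and continuous in the natural $C^\infty$ topology so that Hahn–Banach produces a genuine current; but this is precisely the argument alluded to as \emph{Repeat the argument in Lemma \ref{HMD}}, applied now to a compact manifold with boundary rather than to compact supports. All remaining verifications—well-definedness in cohomology, commutativity of the triangle, and finite-dimensionality—are routine.
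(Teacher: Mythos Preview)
Your approach is essentially the paper's own: factor $\mathscr{L}_I$ through the evaluation map so that the composite is the Lefschetz pairing of Theorem~\ref{LefDu}, and then invoke the duality $H_{n-k}(\mathscr{D}'_*(M))\simeq (H^{n-k}_{\dR}(M))^*$ of Lemma~\ref{HMD}; the paper's one-line proof ``Use Theorem~\ref{LefDu} and Lemma~\ref{HMD}'' is exactly this triangle. One small slip worth fixing: the well-definedness of $d\alpha\mapsto T(\alpha)$ follows from the \emph{hypothesis} that $T$ vanishes on closed forms (if $d\alpha=d\alpha'$ then $\alpha-\alpha'$ is closed), not from $dT=0$; the latter is only what makes $[T]$ a homology class to begin with, while continuity is where the closed-range/Open Mapping argument of Lemma~\ref{HMD} enters.
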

\begin{proof} Use Theorem \ref{LefDu} and Lemma \ref{HMD} below.
\end{proof}
\begin{lemma}[Homological Duality Lemma]\label{HMD} The following holds:
\[ H_{n-k}(\mathscr{D}'_*(M))\simeq (H^{n-k}_{\dR}(M))^*.
\]
\end{lemma}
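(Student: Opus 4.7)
The plan is to construct a natural evaluation map
\[ \Phi:H_{n-k}(\mathscr{D}'_*(M))\ra (H^{n-k}_{\dR}(M))^*,\qquad [T]\mapsto \bigl([\omega]\mapsto T(\omega)\bigr),\]
and establish its bijectivity via two applications of the Hahn-Banach theorem, mirroring the compactly-supported argument alluded to earlier in the paper. Well-definedness is automatic from the chain differential $dT(\eta)=T(d\eta)$: a closed $T$ vanishes on exact forms, so $T(\omega)$ depends only on $[\omega]$; conversely, if $T=dS$ then $T(\omega)=S(d\omega)=0$ for any closed $\omega$.

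For surjectivity, given $\varphi\in (H^{n-k}_{\dR}(M))^*$ I would pull it back along the natural projection $Z^{n-k}:=\ker d \ra H^{n-k}_{\dR}(M)$ to a linear functional $\widetilde{\varphi}:Z^{n-k}\ra \bR$. Compactness of $M$ makes $H^{n-k}_{\dR}(M)$ finite-dimensional, so $\widetilde{\varphi}$ is automatically continuous, and Hahn-Banach on the Fr\'echet space $\Omega^{n-k}(M)$ extends it to a current $T\in \mathscr{D}'_{n-k}(M)$. Then $dT(\eta)=T(d\eta)=\widetilde{\varphi}(d\eta)=0$, since $d\eta$ is exact and thus mapped to $0$ in $H^{n-k}_{\dR}(M)$. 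So $T$ is closed and $\Phi([T])=\varphi$.

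For injectivity, suppose $T\in Z_{n-k}(\mathscr{D}'_*(M))$ vanishes on all closed $(n-k)$-forms. Define $S_0:B^{n-k+1}:=d\Omega^{n-k}(M)\ra \bR$ by $S_0(d\omega):=T(\omega)$, which is well-defined by hypothesis. The crux is continuity of $S_0$ in the subspace topology inherited from $\Omega^{n-k+1}(M)$: since $H^{n-k+1}_{\dR}(M)$ is finite-dimensional, $B^{n-k+1}$ is closed in the Fr\'echet space $\Omega^{n-k+1}(M)$, and the continuous linear bijection $\widetilde{d}:\Omega^{n-k}(M)/Z^{n-k}\ra B^{n-k+1}$ between Fr\'echet spaces has continuous inverse by the open mapping theorem. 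Writing $S_0=\widetilde{T}\circ \widetilde{d}^{\,-1}$ where $\widetilde{T}$ is the functional induced by $T$ on the quotient gives continuity of $S_0$. Hahn-Banach then extends $S_0$ to a current $S\in \mathscr{D}'_{n-k+1}(M)$, and $dS(\omega)=S(d\omega)=S_0(d\omega)=T(\omega)$ for all $\omega\in\Omega^{n-k}(M)$, so $[T]=0$.

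The principal obstacle is precisely this continuity verification in the injectivity step, which depends on $B^{n-k+1}$ being closed in $\Omega^{n-k+1}(M)$; that closedness is equivalent to finite-dimensionality of $H^{n-k+1}_{\dR}(M)$, itself guaranteed by the compactness of $M$ via the Hodge-Morrey-Friedrichs decomposition already invoked in the proof of Theorem \ref{LefDu}. Without compactness one would be forced to dualize against compactly supported forms and land on Borel-Moore homology, as noted in the introduction.
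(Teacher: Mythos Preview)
Your proof is correct and follows essentially the same route as the paper's: Hahn--Banach for surjectivity, and the Open Mapping Theorem followed by Hahn--Banach for injectivity, with the closed-range hypothesis supplied by compactness of $M$. The only difference is packaging: the paper first states an abstract lemma for a three-term Fr\'echet complex $A\xrightarrow{d}B\xrightarrow{\delta}C$ with closed-range maps and then specializes, whereas you work directly with the de Rham complex; the logical content is identical.
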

\begin{proof} The is based on the following more general fact, which we found in the online notes of B. Lawson and whose proof we include  for the convenience of the reader.

\vspace{0.3cm}

Let $A\stackrel{d}{\longrightarrow}B\stackrel{\delta}{\longrightarrow}C$ be continuous linear maps of Fr\^echet spaces such that $\delta\circ d=0$. Suppose $d$ and $\delta$  have closed range. Then there exists a natural \emph{algebraic} isomorphism:

\[ \frac{\Ker d^*}{\Imag \delta^*}=:H(B^*) \simeq H(B)^*:=\left(\frac{\Ker{\delta}}{\Imag d}\right)^*.
\]

The isomorphism comes from restricting the natural duality pairing  $B^*\times B\ra \bR$ to a pairing on the product $\Ker d^*\times \Ker \delta$ and noting that this pairing vanishes on both $\Ker d^*\times \Imag d$ and on $\Imag \delta^*\times \Ker \delta$, thus inducing a pairing:
\[ \frac{\Ker d^*}{\Imag \delta^*}\times \frac{\Ker \delta}{\Imag d}\ra \bR
\]
 and therefore a linear map $H(B^*)\ra H(B)^*$.
 
 Surjectivity of this map is a straightforward application of Hahn-Banach Theorem, by composing a continuous $\beta:\frac{\Ker \delta}{\Imag d}\ra \bR$ with the continuous projection $\Ker \delta\ra \frac{\Ker \delta}{\Imag d}$ and then extending the resulting map to  the entire $B$.
 
 In order to prove injectivity take $b^*\in B^*$ such that $b^*\bigr|_{\Ker \delta}\equiv 0$. We would like to prove that $b^*=\delta^* (c^*):=c^*\circ \delta$ for some $c^*\in C^*$. For that end, notice that $b^*$ induces a continuous map $\frac{B}{\Ker \delta}\ra \bR$ and that
 \[ \delta: \frac{B}{\Ker \delta}\ra \Imag \delta,
 \]
 is a linear homeomorphism by the Open Mapping Theorem which is valid also for Fr\^echet spaces (\cite{Tr}). This uses in a fundamental way the fact that $\Imag \delta$ is closed. Now $b^*\circ \delta^{-1}:\Imag \delta \ra \bR$ can be extended by the Hahn-Banach Theorem to a a continuous map $c^*:C \ra \bR$ and one checks that $b^*=c^*\circ \delta$.
 
 \vspace{0.3cm}
 In the case under inspection, when $M$ is compact, $\Imag d$ is closed because it is of finite codimension inside the closed subspace $\Ker d$. 
\end{proof}

We complete now Lefschetz duality by describing the other isomorphism. Notice first that we have two short exact sequences of chain complexes. The first one is:
\begin{equation}\label{exseqI} 0\ra{\Omega}^{*-1}(\partial M)\stackrel{a}{\longrightarrow} \Omega^*(M,\partial M)\stackrel{b}{\longrightarrow}  \widetilde\Omega^*(M)\ra 0
\end{equation}
\[ a(\gamma)=(0,\gamma)\qquad\qquad b(\omega,\gamma)=\omega.
\]
where $\widetilde{\Omega}^{*}( M)$ is $\Omega^{*}(M)$ with the differential changed to $-d$ in order for everything to commute.

The second one is:
\begin{equation}\label{exseqII} 0\ra \widetilde{\mathscr{D}}'_*(M)\stackrel{A}{\longrightarrow}  \mathscr{D}'_*(M,\partial M)\stackrel{B}{\longrightarrow}{ \mathscr{D}}'_{*-1}(\partial M)\ra 0
\end{equation}
\[ A(T)=(T,0)\qquad\qquad B(T,S)=S.
\]
where $\widetilde{ \mathscr{D}}'_{*}( M)$ is $\mathscr{D}'_*(M)$ with the differential changed to its negative.

\begin{lemma} The connecting homomorphism in the long exact sequence induced by (\ref{exseqI}), respectively (\ref{exseqII})  is the pull-back:
\[\iota^*: H^k_{\dR}(M)\ra H^{k}_{\dR}(\partial M).
\]
respectively the push-forward:
\[ \iota_*:H_{k}(\mathscr{D}'_*(\partial M))\ra H_k(\mathscr{D}'_*(M)).
\]
\end{lemma}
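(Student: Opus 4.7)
The plan is to apply the standard zig-zag construction of the connecting homomorphism to each of the short exact sequences (\ref{exseqI}) and (\ref{exseqII}), exploiting the very explicit form of the maps $a,b$ and $A,B$.

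As a preliminary step I would verify that $a,b,A,B$ are honest chain maps; this is where the sign flips defining $\widetilde{\Omega}^*(M)$ and $\widetilde{\mathscr{D}}'_*(M)$ become crucial. Indeed, (\ref{deq1}) yields $b(d(\omega,\gamma))=-d\omega=(-d)b(\omega,\gamma)$, so $b$ intertwines $d$ with $-d$ precisely when the quotient complex is $\widetilde{\Omega}^*(M)$; analogously (\ref{deq2}) gives $d(T,0)=(-dT,0)=A(-dT)$, which forces the sub-complex to carry $-d$. The commutation checks for $a$ and $B$ are then immediate.

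For (\ref{exseqI}), given a closed class represented by $\omega\in\widetilde{\Omega}^k(M)$, the obvious preimage under $b$ is the pair $(\omega,0)\in\Omega^k(M,\partial M)$. Applying (\ref{deq1}) would give
\[ d(\omega,0)=(-d\omega,\iota^*\omega)=(0,\iota^*\omega),\]
which lies in the image of $a$ and equals $a(\iota^*\omega)$. Hence the connecting map sends $[\omega]$ to $[\iota^*\omega]$, i.e., it is the pull-back $\iota^*$. The argument for (\ref{exseqII}) is completely parallel: given closed $S\in\mathscr{D}'_{k-1}(\partial M)$, I would lift through $B$ to $(0,S)$, apply (\ref{deq2}) to obtain
\[ d(0,S)=(\iota_*S-0,dS)=(\iota_*S,0)=A(\iota_*S),\]
and conclude that the connecting homomorphism sends $[S]$ to $[\iota_*S]$, i.e., it is the push-forward $\iota_*$.

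I do not expect any substantive obstacle: the whole argument is a routine diagram chase. The only point requiring care is the sign bookkeeping, but the conventions (the $-d$ on the tilded complexes together with the specific differentials (\ref{deq1}) and (\ref{deq2})) have been set up precisely so that the connecting homomorphisms come out as $\iota^*$ and $\iota_*$ on the nose, with no parasitic sign in front.
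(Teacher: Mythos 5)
Your zig-zag computation is exactly the paper's argument: lift $\omega$ to $(\omega,0)$, apply the differential to get $(0,\iota^*\omega)=a(\iota^*\omega)$, and dually lift $S$ to $(0,S)$ to get $(\iota_*S,0)=A(\iota_*S)$. The proposal is correct and follows essentially the same route, merely adding the (routine) verification that $a,b,A,B$ are chain maps, which the paper leaves implicit.
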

\begin{proof} Both relations are immediate consequences of the expression of the boundary homomorphism (see \cite{Br}, page 178) which, for example in the case of the first exact sequence looks like:
\[ \delta[\omega]=[a^{-1}\circ d\circ b^{-1}(\omega)].
\]
One can take $(\omega,0)$ for $b^{-1}(\omega)$ and then $d(\omega,0)=(0,\iota^*\omega)=a(\iota^*\omega)$.
\end{proof}
\begin{remark} One can also describe the connecting homomorphism \linebreak $H^{*-1}_{\dR}(\partial M)\ra H^*(\Omega_D^*(M, \partial M))$ which results by  composing $a$ with the inverse of the isomorphism (\ref{eq1234}). It takes $[\gamma]$ to $[d\alpha]$ where $\alpha$ is any  form on $M$ such that $\iota^*\alpha=\gamma$. Notice that $d\alpha$ is not necessarily exact in $\Omega_D^*(M)$ since $\alpha\notin\Omega_D^*(M)$, unless $\gamma=0$. This is the connecting homomorphism in the long exact sequence associated to:
\[ 0\ra \Omega^*_D(M)\ra\Omega^*(M)\stackrel{\iota^*}{\ra} \Omega^*(\partial M)\ra 0.
\]
\hfill\(\Box\)
\end{remark}
For the next result we will use the following notation: $H_k(\cdot):=H_k(\mathscr{D}'_*(\cdot))$,
 \[ \tau_{n,k}=k(n-k-1) \qquad \mbox\qquad\upsilon_{n,k}=k(n-k).\]
\begin{theorem} The following diagram with the horizontal rows exact commutes up to the indicated signs. Moreover the vertical arrows are isomorphisms:

\begin{equation}\label{commladder2} \xymatrix@C=1.5pc{ \ar[r]^{b\qquad\;}&H^k_{\dR}(M) \ar[r]^{\iota^*}\ar[d]_{\mathscr{L}_{II}} \ar@{}| {(-1)}^{\quad\tau_{n,k}}[dr] & H^k_{\dR}(\partial M) \ar[r]^{a\;\;\;\;} \ar[d]^{\PD_{\partial M}}  \ar@{}| {1}^{\;\; \;\;}[dr]& H^{k+1}_{\dR}(M,\partial M)\ar[r]^{\;\;\;b} \ar[d]_{\mathscr{L}_{I}} \ar@{}| {(-1)\;\;}^{\;\; \;\;\;\;\upsilon_{n,k+1}}[dr] & H^{k+1}_{\dR}(M) \ar[d]^{\mathscr{L}_{II}}\ar[r]^{\qquad\iota^*}&\\
\ar[r]^{A\qquad\quad}&H_{n-k}(M,\partial M)\ar[r]^{B} &H_{n-k-1}(\partial M) \ar[r]^{\iota_*} & H_{n-k-1}(M)\ar[r]^{A\quad} & H_{n-k-1}(M,\partial M)\ar[r]^{\qquad \;\;\;\;B}&
}
\end{equation}

where $\PD_{\partial M}$ is Poincar\'e isomorphism on the boundary.
\end{theorem}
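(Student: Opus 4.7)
The plan has three parts: verifying exactness of the two rows, checking square-by-square commutativity with the indicated signs, and concluding via the Five Lemma that every occurrence of $\mathscr{L}_{II}$ is an isomorphism.

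First I would note that exactness of both rows is immediate from the long exact sequences attached to the short exact sequences of (co)chain complexes (\ref{exseqI}) and (\ref{exseqII}), with the connecting homomorphisms identified with $\iota^*$ and $\iota_*$ by the preceding Lemma. For the leftmost square, with $[\eta]\in H^k_{\dR}(M)$, one has $B\circ\mathscr{L}_{II}([\eta])=\{\gamma\mapsto\int_{\partial M}\gamma\wedge\iota^*\eta\}$ whereas $\PD_{\partial M}(\iota^*[\eta])=\{\gamma\mapsto\int_{\partial M}\iota^*\eta\wedge\gamma\}$, so these two currents on the closed $(n-1)$-manifold $\partial M$ differ precisely by the graded-commutativity sign $(-1)^{k(n-1-k)}=(-1)^{\tau_{n,k}}$. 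The middle square commutes strictly on the nose: both $\mathscr{L}_I(0,\gamma)$ and $\iota_*\PD_{\partial M}(\gamma)$ are, as currents on $M$, the functional $\eta\mapsto\int_{\partial M}\gamma\wedge\iota^*\eta$.

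The third square is the only one where the two candidates differ by a nontrivial boundary. For $(\omega,\gamma)\in\Omega^{k+1}(M,\partial M)$ closed, one has $d\omega=0$ and $\iota^*\omega=-d\gamma$. The key calculation is that $A\circ\mathscr{L}_I(\omega,\gamma)-(-1)^{\upsilon_{n,k+1}}\mathscr{L}_{II}(\omega)$, rewritten as a pair in $\mathscr{D}'_{n-k-1}(M,\partial M)$ after absorbing the sign from $\int\tilde\omega\wedge\omega=(-1)^{\upsilon_{n,k+1}}\int\omega\wedge\tilde\omega$ in the first slot, collapses to $(\iota_*[\gamma],\,-(-1)^{\upsilon_{n,k+1}}[\iota^*\omega])$, where the brackets denote the currents associated to the given forms in the convention used by $\mathscr{L}_I$. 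I would then verify that this pair is precisely $d(0,[\gamma])$ in the current mapping cone, by combining Stokes on the closed manifold $\partial M$ (which yields $d[\gamma]=(-1)^{k+1}[d\gamma]$) with the relation $d\gamma=-\iota^*\omega$ and the convention $dT(\eta)=T(d\eta)$ for currents. The sign bookkeeping and identification of the correct boundary in the mapping cone complex is the main obstacle of the proof.

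Once commutativity up to sign is established, $\PD_{\partial M}$ is an isomorphism by classical Poincar\'e duality on the closed manifold $\partial M$ and $\mathscr{L}_I$ is an isomorphism by the Corollary preceding this theorem. The Five Lemma applied to any five consecutive entries of the ladder then forces each instance of $\mathscr{L}_{II}$ to be an isomorphism as well; the sign discrepancies in individual squares are harmless since they can be absorbed into the vertical maps by $\pm 1$ automorphisms at each node without affecting bijectivity.
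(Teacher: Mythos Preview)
Your proposal is correct and follows essentially the same approach as the paper: the paper's proof consists of the single line ``The signs are checked and explained via the skew-commutativity of the exterior product. Finally, $\mathscr{L}_{II}$ is also an isomorphism because of the Five Lemma.'' You have supplied the details the paper omits, in particular the verification that in the third square the discrepancy $A\circ\mathscr{L}_I(\omega,\gamma)-(-1)^{\upsilon_{n,k+1}}\mathscr{L}_{II}(\omega)$ equals $d(0,[\gamma])$ in the current mapping cone; your sign bookkeeping there is correct.
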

\begin{proof} The signs are checked and explained via the skew-commutativity of the exterior product. Finally, $\mathscr{L}_{II}$ is  also an isomorphism because of the Five Lemma.
\end{proof}
\begin{remark} There is a bit of freedom in choosing the signs in the above definitions. Most of these choices are not independent. For example, one other choice would be to take the differential on $\Omega^k(M,\partial M)$ to be $(\omega,\gamma)\ra (d\omega,\iota^*\omega-d\gamma)$\footnote{This is the convention  that Bott and Tu use in \cite{BT}.} which requires a change of sign in $\mathscr{L}_I$ as the second integral will need a $-$ in front. Then the differential on $\mathscr{D}'_{k}(M,\partial M)$ has to be changed into $(T,S)\ra (\iota_*S+dT,-dS)$. This convention has the "drawback" of making the pair $(N,-\partial N)$  a closed relative current, meaning that we need to orient $\partial N$ using the \emph{inner} normal first in order for the pair $(N,\partial N)$ to be  closed. That, of course, seemed like heresy and therefore we made the choice (\ref{deq1}).   It turns out that once  (\ref{deq2}) is fixed then one does not have too much freedom in playing around with the  signs.
\end{remark}

\section{Lefschetz-de Rham Duality in bigger codimension}\label{LDbcodim}

In this section $K\subset M$ is an oriented, closed $k$-dimensional submanifold of $M$. Poincar\'e Duality on $M\setminus K$ says that the pairing:
\[ \Omega_{\cpt}^i(M\setminus K)\times \Omega^{n-i}(M\setminus K)\ra \bR,\qquad (\omega,\eta)\ra \int_{M\setminus K} \omega\wedge \eta,
\]
descends to a non-degenerate pairing in cohomology. This gives rise to two natural isomorphisms:
\begin{equation}\label{eq5.1} H_{\cpt}^{n-i}(M\setminus K)\simeq H_{i}(\mathscr{E}_*'(M\setminus K))
\end{equation}
and
\begin{equation}\label{eq5.2} H^{n-i}(M\setminus K)\simeq H_{i}(\mathscr{D}_*'(M\setminus K)).
\end{equation}
The groups $\mathscr{E}_*'(M\setminus K)$ are currents in $M\setminus K$ with compact support. 

We would like to produce isomorphisms which involve the relative cohomology/homology groups $H^*(M,K)$ and $H_*(M,K):=H_*(\mathscr{D}_*'(M,K))=H_*(\mathscr{E}_*'(M,K))$. 
We notice the following:
\begin{prop}\label{cptrel} The map
\begin{equation}\label{fiso}\Omega^i_{\cpt}(M\setminus K)\ra \Omega^i(M,K),\qquad \omega\ra(\omega,0),
\end{equation}
induces an isomorphism of cohomology groups where, on the right, $\omega$ is extended by $0$ outside its support.
\end{prop}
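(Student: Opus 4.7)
The plan is to mimic the strategy of Lemma~\ref{drel}, pushing it one geometric step further: instead of merely arranging the relevant forms to vanish on $K$, we must make them vanish on an entire open neighborhood of $K$. This extra push is supplied by the relative Poincar\'e lemma on a tubular neighborhood $U$ of $K$, combined with a cutoff function. Throughout we assume $M$ is compact, which is already implicit in the identification $H_*(\mathscr{D}'_*(M,K))=H_*(\mathscr{E}'_*(M,K))$ recorded above. First one checks that (\ref{fiso}) anti-commutes with $d$: since $\supp\omega\subset M\setminus K$ one has $\iota^*\omega=0$, whence $d(\omega,0)=(-d\omega,0)$.

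\textbf{Injectivity.} Suppose $\omega\in\Omega^i_{\cpt}(M\setminus K)$ is closed and $(\omega,0)=d(\eta,\gamma)$, so $\omega=-d\eta$ and $\iota^*\eta+d\gamma=0$. Exactly as in Lemma~\ref{drel}, extend $\gamma$ by a partition of unity to $\tilde\gamma\in\Omega^{i-2}(M)$ with $\iota^*\tilde\gamma=\gamma$ and set $\eta':=\eta+d\tilde\gamma$, so that $d\eta'=-\omega$ and $\iota^*\eta'=0$. Shrink a tubular neighborhood $U$ of $K$ so that $\omega\equiv 0$ on $U$, possible because $\supp\omega$ is compact in $M\setminus K$. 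Since $K\hookrightarrow U$ is a deformation retract, $H^*(U,K)=0$, and the relative Poincar\'e lemma furnishes $\beta\in\Omega^{i-2}(U)$ with $d\beta=\eta'\bigr|_U$ and $\iota^*\beta=0$. Pick a cutoff $\rho\colon M\to[0,1]$ equal to $1$ on a neighborhood $V\subset U$ of $K$ and with $\supp\rho\subset U$, and define
\[\eta'':=\eta'-d(\rho\beta),\]
where $\rho\beta$ is extended by zero outside $U$. Then $\eta''\equiv 0$ on $V$, so $\supp\eta''\subset M\setminus K$; compactness of $M$ yields $\eta''\in\Omega^{i-1}_{\cpt}(M\setminus K)$; and $d\eta''=d\eta'=-\omega$. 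Thus $\omega$ is exact in $\Omega^*_{\cpt}(M\setminus K)$.

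\textbf{Surjectivity.} Given a closed pair $(\omega,\gamma)\in\Omega^i(M,K)$, extend $\gamma$ to $\tilde\gamma$ on $M$. The identity $d(\tilde\gamma,0)=(-d\tilde\gamma,\gamma)$ yields $(\omega,\gamma)-d(\tilde\gamma,0)=(\omega+d\tilde\gamma,0)=:(\omega',0)$, and closedness forces $\iota^*\omega'=\iota^*\omega+d\gamma=0$ and $d\omega'=0$. Apply the tubular-neighborhood step to $\omega'$: write $\omega'\bigr|_U=d\beta'$ with $\iota^*\beta'=0$ and set $\omega'':=\omega'-d(\rho\beta')$. Then $\omega''$ is closed, vanishes on $V$, hence lies in $\Omega^i_{\cpt}(M\setminus K)$. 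Because $\iota^*(\rho\beta')=0$, the correction $(d(\rho\beta'),0)$ equals $-d(\rho\beta',0)$ in $\Omega^*(M,K)$, so $(\omega,\gamma)\sim(\omega'',0)$, giving surjectivity.

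\textbf{Main obstacle.} The real content is producing the primitive $\beta$ on $U$ satisfying simultaneously $d\beta=\eta'\bigr|_U$ and $\iota^*\beta=0$; a standard Poincar\'e-type homotopy operator built from a retraction $U\to K$ that fixes $K$ pointwise delivers this. The only remaining care is sign bookkeeping: one must verify that the cutoff modification $d(\rho\beta)$ corresponds, via the mapping-cone differential~(\ref{deq1}), to subtracting an exact pair in $\Omega^*(M,K)$, and it is precisely the vanishing $\iota^*\beta=0$ that makes this work.
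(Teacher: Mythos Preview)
Your proof is correct. For injectivity you follow essentially the paper's argument: reduce via Lemma~\ref{drel} to a primitive $\eta'$ with $\iota^*\eta'=0$, invoke the Poincar\'e lemma on a tubular neighborhood $U$ where $\omega$ vanishes, and cut off. (Your extra requirement $\iota^*\beta=0$ is harmless here but not actually needed for injectivity.)

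The genuine difference is in surjectivity. You recycle the same local tool---relative Poincar\'e lemma on $U$ plus cutoff---and this is where your condition $\iota^*\beta'=0$ really earns its keep, since it is exactly what makes $(d(\rho\beta'),0)$ exact in the mapping cone. The paper instead builds a \emph{global} smooth map $\tilde\pi:M\to M$ that radially collapses a neighborhood $U_1$ onto $K$, is the identity outside a larger $U_2$, and is homotopic to $\id_M$; then for closed $\omega\in\Omega^*_D(M,K)$ one has $\tilde\pi^*\omega-\omega$ exact and $\tilde\pi^*\omega\equiv 0$ on $U_1$. Your route is more uniform and self-contained; the paper's route introduces the collapsing map $\tilde\pi$, which it then reuses heavily in the proof of Proposition~\ref{currel} and in verifying the commutative ladder~(\ref{diag2}), so there is an economy-of-tools argument for the paper's choice.
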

\begin{proof} Let $\Omega^*_D(M,K)$ be the space of forms on $M$ whose pull-back to $K$ vanishes. Then, just as in Lemma \ref{drel}  the map
\[ \Omega^*_D(M,K)\ra \Omega^*(M,K),\qquad \omega\ra (\omega,0)
\]
induces an isomorphism in cohomology. It is therefore enough to show the isomorphism in cohomology  for the extension by zero map
 \[\Omega_{\cpt}^*(M\setminus K)\ra \Omega_D^*(M,K).\]

We prove surjectivity first. Let $U_1\subset U_2 $ be two open tubular neighborhoods of $K$ such that $\overline{U}_1\subset U_2$ . There exists a smooth map  $\tilde{\pi}:M\ra M$   map with the following properties:
\begin{itemize}
\item[(i)] $\tilde{\pi}\bigr|_{U_1}$ projects $U_1$ radially to $K$, i.e. up to a diffeomorphism it is the projection of the normal bundle of $K$ to $K$;
\item[(ii)] $\tilde{\pi}\bigr|_{M\setminus U_2}=\id_{M\setminus U_2}$.
\item[(iii)] $\tilde{\pi}$ is homotopic to the identity $\id_M$.
\end{itemize}
Such a map can be constructed first on the normal bundle\footnote{Suppose a Riemannian metric has been fixed on $M$} $\nu K$ by letting 
\[ v\ra \rho(r)v,
\] 
where $\rho:[0,\infty)\ra [0,1]$ is a smooth function with $\rho(r)\equiv 1$ for $r\geq 2$ and $\rho(r)\equiv 0$ for $r\leq 1$. Then use a tubular neighborhood diffeomorphism  to move this map to $M$ and extend by identity.

Let $\omega\in \Omega_D^*(M,K)$ be a closed form.
Then $\tilde{\pi}^*\omega-\omega=d\beta$ for some $\beta$. Also, since  $\omega$ vanishes on $K$ then $\tilde{\pi}^*\omega$ vanishes on $U_1$. Hence $\tilde{\pi}^*\omega$ is the form we were looking for.

For injectivity, let $\omega\in \Omega^k_{\cpt}(M\setminus K)$ be  such that $\omega=d\eta$ with $\iota^*\eta=0$  on $K$.  Choose $K\subset U$, a tubular neighborhood, such that $\omega\bigr|_{U}\equiv 0$. The inclusion map $K\hookrightarrow U$   is a homotopy equivalence and therefore $d\eta\bigr|_U=0$ and $\iota^*\eta=0$  imply that there exists a form $\alpha\in\Omega^{k-2}(U)$ such that $d\alpha=\eta$ on $U$. Now let $\phi:U\ra \bR$ be a function with compact support such that $\phi\equiv 1$ in a neighborhood of $K$. Then $\tilde{\eta}:=\eta -d(\phi \alpha)$ is a well-defined form on $M$ such that $d\tilde{\eta}=\omega$ and $\supp \tilde{\eta}\subset M\setminus K$.

\end{proof}
 An immediate consequence is the next one: \\
\noindent
\emph{Alternative proof to Theorem \ref{LefDu}}: Use Proposition \ref{cptrel} to reduce the proof to the fact that the pairing:
\[ H^i_{\cpt}(M\setminus \partial M)\times H^{n-i}(M),\qquad (\omega,\eta)\ra \int_{M}\omega\wedge \eta
\]
is non-degenerate. But this is Poincar\'e Duality on the non-compact manifold $M\setminus \partial M$.
\hfill\(\Box\)
\vspace{0.3cm}

The  dual result of Proposition \ref{cptrel}  is the following statement for currents:
\begin{prop} \label{currel} The map
\begin{equation}\label{eq5.3} \mathscr{D}_*'(M,K)\ra \mathscr{D}_*'(M\setminus K),\qquad (T,S)\ra T\bigr|_{M\setminus K},
\end{equation}
induces an isomorphism of homology groups.
\end{prop}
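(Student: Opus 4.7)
My approach is to identify the map (\ref{eq5.3}) as the topological transpose of the map (\ref{fiso}) from Proposition \ref{cptrel}, and then to deduce the homological isomorphism from the cohomological one by appealing to the Homological Duality Lemma \ref{HMD}.

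First I would verify that (\ref{eq5.3}) is well-defined and intertwines the differentials up to a sign. Well-definedness is immediate: any $\omega\in\Omega^*_{\cpt}(M\setminus K)$ extends by zero to $\Omega^*_{\cpt}(M)$, on which $T$ is tested. For the chain-map property, apply $d$ from (\ref{deq2}): the first component of $d(T,S)=(\iota_*S-dT,dS)$ restricts to $-dT\bigr|_{M\setminus K}$ on $M\setminus K$, since $\iota_*S$ is supported in $K$. Thus restriction sends $d(T,S)$ to $-d(T\bigr|_{M\setminus K})$, which is the analogue on the current side of the statement in Lemma \ref{drel} that the map $\omega\mapsto(\omega,0)$ anti-commutes with $d$.

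Next I would check that (\ref{eq5.3}) is literally the transpose of (\ref{fiso}). For $\omega\in \Omega^k_{\cpt}(M\setminus K)$ extended by zero and $(T,S)\in\mathscr{D}'_k(M,K)$, the natural pairing computes
\[ \langle(T,S),\,(\omega,0)\rangle = T(\omega)+S(\iota^*0) = T(\omega) = \langle T\bigr|_{M\setminus K},\,\omega\rangle. \]
So (\ref{eq5.3}) is exactly the transpose of the chain map (\ref{fiso}).

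Finally I would apply Lemma \ref{HMD} to both pairs: the complex $\Omega^*(M,K)=\Omega^*(M)\oplus\Omega^{*-1}(K)$ has topological dual $\mathscr{D}'_*(M,K)$, and $\Omega^*_{\cpt}(M\setminus K)$ has topological dual $\mathscr{D}'_*(M\setminus K)$. Lemma \ref{HMD} then identifies $H_{n-*}(\mathscr{D}'_*(M,K))\simeq H^*(M,K)^{*}$ and $H_{n-*}(\mathscr{D}'_*(M\setminus K))\simeq H^*_{\cpt}(M\setminus K)^{*}$, and these identifications are compatible with the transpose maps. Since Proposition \ref{cptrel} gives an isomorphism $H^*_{\cpt}(M\setminus K)\simeq H^*(M,K)$ induced by (\ref{fiso}), dualizing yields the required isomorphism induced by (\ref{eq5.3}). \textbf{Main obstacle.} The hypothesis of Lemma \ref{HMD} requires the differentials to have closed range. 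For the de Rham complex of the compact manifold $M$ and the closed submanifold $K$ this is classical (elliptic theory / Hodge decomposition), but the complex $\Omega^*_{\cpt}(M\setminus K)$ lives on a non-compact manifold, so the closed-range property needs a separate argument; an efficient way to get it is precisely through Proposition \ref{cptrel}, which exhibits $\Omega^*_{\cpt}(M\setminus K)$ as quasi-isomorphic to the Fréchet complex $\Omega^*(M,K)$ and allows one to transfer the closed-range property.
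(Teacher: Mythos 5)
Your route is genuinely different from the paper's. The paper does not dualize Proposition \ref{cptrel}: it writes down an explicit chain--level candidate inverse, the map $\varphi$ of (\ref{inv5.3}) (essentially $\bigl(\tilde{\pi}\bigr|_{U^c}\bigr)_*\circ \mathscr{L}_{II}^{U^c}$), shows that its composition with the restriction map (\ref{eq5.3}) is the Poincar\'e duality map on $M\setminus K$, and then proves it is an isomorphism by chasing the commutative ladder (\ref{diag2}) with the five lemma. That longer path is not just a matter of taste: it produces the explicit inverse of (\ref{eq5.3}) which the paper uses afterwards (see the discussion following Theorem \ref{thldbc}), it keeps the argument at the level of Stokes' theorem, and it works without assuming $M$ compact. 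Your transposition argument is shorter, and its formal parts are correct: (\ref{deq2}) is indeed the transpose of (\ref{deq1}), restriction of the first component is the transpose of extension by zero, both anti-commute with $d$, and the identifications of Lemma \ref{HMD} are natural with respect to transposed chain maps, so an isomorphism in cohomology dualizes to one in homology. (Two small points: the duality of Lemma \ref{HMD} preserves the degree, so it identifies $H_k(\mathscr{D}'_*(M,K))$ with $H^k(M,K)^*$ rather than $H_{n-k}$ with $H^k(M,K)^*$ --- harmless here; and identifying $\mathscr{D}'_*(M,K)$ with the full topological dual of $\Omega^*(M,K)$ uses compactness of $M$ and $K$.)

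The step that does not work as written is your ``main obstacle'' paragraph. Lemma \ref{HMD} is stated for Fr\'echet spaces, and $\Omega^*_{\cpt}(M\setminus K)$ is not Fr\'echet but an inductive limit of Fr\'echet spaces; more importantly, closed range is not a property that a quasi-isomorphism ``transfers.'' What the quasi-isomorphism with $\Omega^*(M,K)$ (or with $\Omega^*_D(M,K)$) actually gives you is finite-dimensionality of $H^*_{\cpt}(M\setminus K)$; to get closed range from that you still need the statement that the image of a continuous linear map which has finite codimension inside a closed subspace is itself closed, and the open-mapping argument behind that statement --- the one invoked at the end of the proof of Lemma \ref{HMD} --- is only justified in the paper for Fr\'echet spaces, not for LF-spaces or for closed subspaces of them. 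So at this point you must either prove an LF-version of that functional-analytic lemma or simply quote the classical de Rham closed-range/duality theorem $H_k(\mathscr{D}'_*(X))\simeq (H^k_{\cpt}(X))^*$ valid on any manifold, which is in effect what underlies (\ref{eq5.2}) and the corresponding identification in Section 2. With that input made explicit your proof closes up, and it is then close in spirit to the alternative chain of isomorphisms sketched in the Remark after the paper's proof; as written, however, the closed-range step is a gap, and the soft argument also forfeits the explicit inverse (\ref{inv5.3}) that the paper is after.
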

\begin{proof} Let  $ U\supset K$ be an open tubular neighborhood. We will use again a smooth map $\tilde{\pi}:M\ra M$ which radially collapses $\overline {U}$ to $K$ and is homotopic to the identity. We orient $\partial U$ as the boundary of $U^c$ and write it $\partial U^c$ to recall this orientation convention. 

Consider the map:
\begin{equation}\label{inv5.3}\varphi:\Omega^i(M\setminus K)\ra \mathscr{D}_{n-i}'(M,K),\end{equation}\[ \varphi(\omega)= \left(\eta\ra \int_{{U}^c}\tilde{\pi}^*\eta\bigr|_{U^c}\wedge\omega\bigr|_{U^c}  ,\; \gamma\ra \int_K  \left(\gamma\wedge \int_{\partial U^c/K}j^*\omega\right) \right),
\]
where $j:\partial U^c\ra M\setminus K$ is the inclusion map. Notice that:
\[ \varphi(\omega)(\eta,\gamma)=\mathscr{L}_{II}^{U^c}(\omega\bigr|_{U^c})\left(\left(\tilde\pi\bigr|_{U^c}\right)^*\eta,\left(\tilde\pi\bigr|_{\partial U^c}\right)^*\gamma\right)\;\mbox{or}
\] 
\[ \varphi=\left(\tilde{\pi}\bigr|_{U^c}\right)_*\circ \mathscr{L}_{II}^{U^c}.
\]
Then by (\ref{eq3}), (\ref{eq4}) and Remark \ref{pd} we have:
\[d[\varphi(\omega)]=(-1)^{n-1-|\omega|}\varphi(d\omega),
\]
and we get a well-defined map 
\begin{equation}\label{eq5.3h} \varphi:H^i(M\setminus K)\ra H_{n-i}(M,K).
\end{equation}
We check that composing  (\ref{eq5.3h}) with the induced map in homology for (\ref{eq5.3}) gives the Poincar\'e Duality pairing
 \begin{equation}\label{eq5.31} H^i(M\setminus K)\ra H_{n-i}(\mathscr{D}_*(M\setminus K)),\qquad
\omega\ra \left(\eta\ra \int_{M\setminus K}\eta\wedge \omega\right).
\end{equation}
Indeed, notice that for $\eta\in \Omega^*(M)$ with $\supp\eta\subset M\setminus K$ :
\[ \int_{U}\tilde{\pi}^*\eta\wedge\omega=0,
\]
Hence for a closed $\omega$ and closed $\eta$ with $\supp\eta\subset M\setminus K$ we have:
\[ \int_{U^c}\tilde{\pi}^*\eta\wedge\omega=\int_M\tilde\pi^*\eta\wedge\omega=\int_M\eta\wedge \omega=\int_{M\setminus K}\eta\wedge \omega,
\]
where in the last equality we used that $\tilde{\pi}$ is homotopic to the identity and therefore $\eta$ and $\tilde{\pi}^*\eta$ differ by an exact form.
This proves surjectivity of   (\ref{eq5.3}) on homology groups.

 The following commutative ladder, together with the $5$-Lemma proves that in fact (\ref{eq5.3h}) is an isomorphism and therefore so is (\ref{eq5.3}):
\begin{equation}\label{diag2}\footnote{Recall $\upsilon_{n,k}=k(n-k)$.}\xymatrix@C=2.2pc{  H^{k-i}(K)\ar[r]^{\tau \wedge \tilde{\pi}^*(\cdot)}\ar[d]_{\PD_K} \ar@{}| {1}[dr]&H^{n-i}(M)\ar[r]^{(\cdot)\large|_{M\setminus K}\;\;\;}\ar[d]_{\PD_M} \ar@{}| {(-1)}^{\quad\upsilon_{n,i}}[dr]   &    H^{n-i}(M\setminus K) \ar[r]^{\beta}\ar[d]_{\varphi} \ar@{}| {(-1)\;\;\;\;}^{\quad\upsilon_{k,i-1}-1}[dr] & H^{k-i+1}(K)\ar[r]^{\tau\wedge \tilde{\pi}^*(\cdot)}\ar[d]^{\PD_K} \ar@{}| {1}[dr]& H^{n-i+1}(M)\ar[d]^{\PD_M}
\\ H_{i}(K)\ar[r] ^{\iota_*}&
 H_i(M)\ar[r]^{A}& H_i(M,K)\ar[r]^B & H_{i-1}(K)\ar[r]^{\iota_*}& H_{i-1}(M) }
\end{equation}
where the bottom horizontal maps come from the analogue of (\ref{exseqII}); $\tau\in\Omega^{n-k}(M)$ is a Thom form for $K$   with \emph{compact} support in $U$,  $\PD_M,$ and $\PD_K$ are the Poincar\'e Duality operators: 
\[\omega\ra\left \{\eta \ra \int_{M}\omega\wedge\eta\right\}.\]
Finally,
\[ \beta (\gamma)=-\int_{\partial U^c/K} j^*\gamma=\int_{\partial U/K}j^*\gamma.\]

The top exact sequence is really the exact sequence of the pair $(M, M\setminus K)$ combined with  excision:
\begin{equation}\label{eq5.4} H^{n-i+1}(M,M\setminus K)\stackrel{\sim}{\ra} H^{n-i+1}(\overline{U},\overline{U}\setminus K),
\end{equation}
and with the Thom isomorphism:
\begin{equation}\label{eq5.5} H^{n-i+1}(\overline{U},\overline{U}\setminus K)\stackrel{\sim}{\ra} H^{k-i+1}(K),
\end{equation}
all made explicit as follows. The map $(\ref{eq5.4})$ is  pull-back of pairs.  The isomorphism  (\ref{eq5.5}) is:
\[ (\omega,\gamma)\ra \int_{\overline{U}/K}\omega-\int_{\partial U^c/K}j^*\gamma
\]
which results as a composition of the pull-back  $H^{n-i+1}(\overline{U},\overline{U}\setminus K)\ra H^{n-i+1}(\overline{U},\partial U^c)$ and the map $\nu:H^{n-i+1}(\overline{U},\partial U)\ra H^{k-i+1}(K)$ defined at (\ref{eq6.3}). Therefore we get the isomorphism
 \[H^{n-i+1}(M,M\setminus K)\ra H^{k-i+1}(K),\;\;\;\;  \tilde{\nu}(\omega,\gamma)=\int_{\overline{U}/K}\omega\bigr|_{U}-\int_{\partial U^c/K}j^*\gamma\bigr|_{U\setminus K},
\]
which composed with $\gamma\ra(0,\gamma)$ gives exactly the map $\beta$. The commutativity of the third square up to the indicated sign is easy to verify.
 
Let us check the commutativity of the last (and the first) square. 
 
  Let $\gamma\in \Omega^{k-i+1}(K)$, $\omega\in \Omega^{n-i+1}(M)$ be two closed forms. Then
\[ \PD_M(\tau\wedge\tilde{\pi}^*(\gamma))(\omega)=\int_M\tau\wedge \tilde{\pi}^*\gamma\wedge\omega=\int_{\overline{U}}\tau\wedge \tilde{\pi}^*\gamma\wedge\omega\stackrel{\#}{=}\int_{\overline{U}}\tau\wedge\tilde{\pi}^*\gamma\wedge \tilde{\pi}^*\iota^*\omega=\]\[=\int_K\left(\int_{\overline{U}/K}\tau\right)\wedge \gamma\wedge \iota^*\omega=\int_K\gamma\wedge \omega=\iota_*(\PD_K(\gamma))(\omega).\]
Equality in $\#$ follows the fact that $\iota \circ \tilde{\pi}:\overline{U}\ra \overline{U}$ is  homotopy equivalent with the identity and hence $\omega$ and $ \tilde{\pi}^*\iota^*\omega$  differ by an exact form $d\theta$. Combine this with Stokes on $U$ for the form $d(\tau\wedge\tilde{\pi}^*\gamma\wedge \theta)$ to get $\#$.

We show now that the natural map 
\[ H^{n-i+1}(M,M\setminus K)\ra H^{n-i+1}(M),\qquad (\omega,\gamma)\ra \omega\] becomes
 \[H^{k-i+1}(K)\ni\gamma\ra \tau\wedge \tilde{\pi}^*(\gamma)\in H^{n-i+1}(M) \]
 after the identification of $H^{n-i+1}(M,M\setminus K)$ with $H^{k-i+1}(K)$ via the Thom isomorphism as above. This means
 \[ \int_M\tau\wedge \tilde{\pi}^*(\tilde\nu(\omega,\gamma))\wedge\eta=\int_{M} \omega\wedge\eta,
 \]
 for all closed $\eta\in\Omega^{i-1}(M)$ and closed pairs $(\omega,\gamma)$.
 In view of the commutativity of the last square this is the same thing as:
 \begin{equation}\label{eq5.7}\int_K\tilde\nu(\omega,\gamma)\wedge\iota^*\eta=\int_{M} \omega\wedge\eta.
 \end{equation}
 The left hand side of (\ref{eq5.7}) is:
 \[ \int_{\overline{U}} \omega\bigr|_{U}\wedge\tilde{\pi}^*\iota^*\eta-\int_{\partial U^c}\gamma\bigr|_{\partial U^c}\wedge\tilde{\pi}^*\iota^*\eta=\int_{\overline{U}}\omega\wedge\tilde{\pi}^*\iota^*\eta-\int_{U^c}d(\gamma\wedge\tilde{\pi}^*\iota^*\eta)=
 \]\[=\int_{\overline{U}}\omega\wedge\tilde{\pi}^*\iota^*\eta+\int_{U^c}\omega\wedge\tilde{\pi}^*\iota^*\eta=\int_M\omega\wedge \tilde{\pi}^*\iota^*\eta=\int_M\omega\wedge\eta.
 \]
 
Finally, we prove the commutativity of the second square of diagram (\ref{diag2}) up to the indicated sign. We have:
\begin{equation}\label{eq59} (-1)^{\upsilon_{n,i}}A\circ \PD_M(\omega)=\left(\eta\ra \int_{M}\eta\wedge \omega,\gamma\ra 0\right)
\end{equation}
Since $\tilde{\pi}\sim \id$ then
\begin{equation}\label{eq529} \eta-\tilde{\pi}^*\eta=d\tilde{\eta}+\tilde{H}(d\eta),
\end{equation}
for some form $\tilde{\eta}$ where $\tilde{H}:\Omega^{i+1}(M)\ra\Omega^i(M)$ is an operator induced by the homotopy between $\tilde{\pi}$ and $\id$ (see (\ref{eq6.17})). Then since $\omega$ is closed:
\[ \int_{M}\eta\wedge\omega=\int_{M}\tilde{\pi}^*\eta\wedge \omega+\int_M\tilde{H}(d\eta)\wedge\omega.
\]
Now
\begin{equation}\label{eq5.20} \left(\int_{U^c}\tilde{\pi}^*\eta\wedge\omega,\int_{\partial U^c}\tilde{\pi}^*\gamma\wedge\omega\right)-\left(\int_{M}\tilde{\pi}^*\eta\wedge \omega,0\right)=\left(-\int_{U}\tilde{\pi}^*\eta\wedge \omega,\int_{\partial U^c}\tilde{\pi}^*\gamma\wedge\omega\right)=
\end{equation}
Let $\omega=\tilde{\pi}^*\omega+d\theta$. Then (\ref{eq5.20}) becomes:
\begin{equation}\label{eq19}\left(-\int_{U}\tilde{\pi}^*\eta\wedge d\theta,\int_{\partial U^c}\tilde{\pi}^*\gamma\wedge d\theta\right)=\end{equation}\[=(-1)^{|\eta|-1}\left(\int_U[d(\tilde{\pi}^*\eta\wedge\theta)-\tilde{\pi}^*d\eta\wedge\theta],\int_{\partial U^c}-d\tilde{\pi}^*\gamma\wedge\theta\right)=\]\[=(-1)^{|\eta|}\left(\int_{\partial U^c}\tilde{\pi}^*\eta\wedge\theta+\int_{U}\tilde{\pi}^*d\eta\wedge\theta,\int_{\partial U^c}\tilde{\pi}^*d\gamma\wedge\theta\right).
\]
If we let
 \[S(\hat{\gamma}):=(-1)^{|\hat{\gamma}|}\int_{\partial U^c}\tilde{\pi}^*\hat{\gamma}\wedge \theta \mbox{~and~}\]\[ T(\hat{\eta}):=\int_{M}\tilde{H}(\hat{\eta})\wedge\omega+(-1)^{|\hat{\eta}|}\int_{U}\tilde{\pi}^*\hat{\eta}\wedge\theta,\]
then putting together (\ref{eq59}), (\ref{eq529}) and (\ref{eq19}) one gets:
\[[\varphi(\omega\bigr|_{M\setminus K})-(-1)^{\upsilon_{n,i}}A\circ \PD_M(\omega)](\eta,\gamma)=(S(\eta)-dT(\eta),dS(\gamma))=d(T,S)(\eta,\gamma).
\]
 \end{proof}
\begin{remark}
  The inverse  in homology  (\ref{inv5.3}) of (\ref{eq5.3}) was initially obtained  via the following sequence of  isomorphisms:
\[ H_i(\mathscr{D}_{*}'(M,K))\;\simeq\; H_i(\mathscr{D}_*'(M,\overline{U}))\;\simeq\;H_i(\mathscr{D}_*'(M\setminus U,\overline{U}\setminus U))\;\simeq\; H^{n-i}(M\setminus U)\;\simeq\;  \]\[{\simeq}\; H^{n-i}(M\setminus \overline{U});\simeq\; (H^i_{\cpt}(M\setminus \overline{U}))^*\;\simeq\; H_i(\mathscr{D}_*'(M\setminus \overline{U}))\;\simeq \;H_i(\mathscr{D}_*'(M\setminus K)).
\]

This sequence of isomorphisms  would constitute an alternative proof provided one first proves the excision property for $H_*(\mathscr{D}_*'(\cdot,\cdot))$ used in the third isomorphism above. Since the later are really the Borel-Moore relative homology groups this is a consequence of the general theory (see \cite{BM}, Section 5).  However, we wanted to stay away from sheaf theory and keep everything as elementary as possible. Notice that a posteriori, excision is also a consequence of Proposition \ref{currel}. 
\end{remark}
\begin{remark} It was long known via sheaf theory that $H_*(\mathscr{D}_*'(M\setminus K))$ fits into a long exact sequence:
\[ H_*(\mathscr{D}_*'(K))\ra H_*(\mathscr{D}_*'(M))\ra H_*(\mathscr{D}_*'(M\setminus K))\ra H_{*-1}(\mathscr{D}_{*}'(K)).
\]
The isomorphism (\ref{eq5.3}) seems to give a direct explanation to this sequence, at least when $K$ is a compact oriented submanifold of $M$.
\end{remark}

Putting together Propositions \ref{cptrel} and \ref{currel} with  (\ref{eq5.1}) and (\ref{eq5.2}) we get:
\begin{theorem}\label{thldbc} There exist Lefschetz Duality isomorphisms:
\begin{itemize}
\item[(a)]
\[ H^{n-i}(M,K)\simeq H_{i}(\mathscr{E}_*'(M\setminus K))=:H_{i}(M\setminus K).
\]
\item[(b)]
\[ H^{n-i}(M\setminus K)\simeq H_{i}(\mathscr{D}_*'(M,K))=:H_{i}(M,K).
\]
\end{itemize}
\end{theorem}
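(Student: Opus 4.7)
The plan is to observe that this theorem is essentially a corollary of the work already done; it amounts to concatenating the isomorphisms established in the preceding propositions with ordinary Poincaré Duality on the open manifold $M\setminus K$. No genuinely new argument is needed, and there is no substantial obstacle — the heavy lifting sits inside Propositions \ref{cptrel} and \ref{currel}.

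For part (a), I would first invoke Proposition \ref{cptrel}, which gives an isomorphism
\[ H^{n-i}(M,K) \;\simeq\; H^{n-i}_{\cpt}(M\setminus K) \]
induced by extension by zero. Composing this with the Poincaré Duality isomorphism (\ref{eq5.1}) on the open manifold $M\setminus K$,
\[ H^{n-i}_{\cpt}(M\setminus K) \;\simeq\; H_{i}(\mathscr{E}_*'(M\setminus K)), \]
yields the stated Lefschetz Duality. For part (b) I would proceed dually: start from Poincaré Duality (\ref{eq5.2}) on $M\setminus K$,
\[ H^{n-i}(M\setminus K) \;\simeq\; H_{i}(\mathscr{D}_*'(M\setminus K)), \]
and then apply Proposition \ref{currel} to rewrite the right-hand side as $H_{i}(\mathscr{D}_*'(M,K))$.

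The only book-keeping worth doing is to identify the explicit pairings implementing the two isomorphisms, so that the Lefschetz duality has the same concrete flavor as Theorem \ref{LefDuintro}. For (b), a closed pair $(T,S)\in \mathscr{D}'_i(M,K)$ pairs with a closed form $\omega\in \Omega^{n-i}(M\setminus K)$ by $\omega \mapsto T|_{M\setminus K}(\omega)$; independence of the representative follows from the differential (\ref{deq2}), using that $\iota_*S$ is supported on $K$ and therefore kills forms supported away from $K$. For (a), the extension-by-zero map of Proposition \ref{cptrel} together with the standard integration pairing on $M\setminus K$ gives the corresponding bilinear form on $\Omega^i(M,K)\times \Omega^{n-i}_{\cpt}(M\setminus K)$. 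Since both Propositions \ref{cptrel} and \ref{currel} have already been proved, and (\ref{eq5.1}), (\ref{eq5.2}) are classical, the theorem follows immediately.
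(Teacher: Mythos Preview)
Your proposal is correct and matches the paper's approach exactly: the paper's proof is a single sentence stating that the theorem follows by combining Propositions \ref{cptrel} and \ref{currel} with the Poincar\'e Duality isomorphisms (\ref{eq5.1}) and (\ref{eq5.2}). Your additional book-keeping about the explicit pairings parallels the paper's subsequent remark that these isomorphisms are not induced \emph{directly} by bilinear pairings and that one must invert (\ref{fiso}) and (\ref{eq5.3}) to obtain them.
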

\vspace{0.3cm}
The isomorphisms in Theorem \ref{thldbc} are not induced \emph{directly} by bilinear pairings. To get indeed pairings, one  has to invert (\ref{fiso}) and (\ref{eq5.3}). While the inverse to (\ref{fiso}) has a rather straightforward description which  can find in the proof,  in the case of (\ref{eq5.3}) the inverse is given by the map (\ref{inv5.3}), provided one can represent a closed current in $M\setminus K$ by a closed form.

\begin{example} One quick application of Theorem \ref{thldbc} is the (homological) definition of the intersection number of two oriented manifolds of complementary dimension, one of them compact without boundary $L\subset M\setminus K$ and the other one with boundary $(S,\partial S)\subset (M,K)$. This is because $L\in H_*(\mathscr{E}'_*(M\setminus K))$ while $(S,\partial S)\in H_*(\mathscr{D}_*'(M,K))$.  This is one way the linking number of a pair of knots $(L,K)$ can be defined, by taking $S$ to be a Seifert surface for $K$.
\end{example}

\section{Homotopy operators on manifolds with boundary}

We make the following important observation: the boundary of $\partial B$ is oriented using the outer normal first convention. Now, the product orientation $\partial_t\wedge \Or{B}$ of $[0,t]\times \partial B$ is the opposite  orientation of $[0,t]\times \partial B$ as a codimension $1$ boundary of $[0,t]\times B$, using the same outer normal first convention. \emph{We will always use the product orientation on $[0,t]\times \partial B$}. Hence Stokes Theorem on the cylinder $[0,t]\times B$ takes the form:
\begin{equation}\label{Steq1}\int_{[0,t]\times B} d\alpha=\int_{B}\iota_t^*\alpha-\int_B\iota_0^*\alpha-\int_{[0,t]\times \partial B}\alpha.
\end{equation}

We consider the following situation. Suppose $(B,\partial B)$ and $(P,\partial P)$ are two \emph{compact} smooth manifolds with boundary with $\dim{B}=n$ and $\phi:[0,t]\times B\ra P$ is a homotopy which is boundary compatible in the sense that 
\[ \Imag\phi\bigr|_{[0,t]\times \partial B}\subset \partial P.
\]
Consider the following homotopy operators:
\[ \eT^{\mathrm{I}}_t:\Omega^k(P,\partial P)\ra \mathscr{D}'_{n-k+1}(B),\]\[ \eT^{\mathrm{I}}_t(\omega,\gamma)(\eta)=-\int_{[0,t]\times B}\phi^*\omega\wedge\pi_2^* \eta+\int_{[0,t]\times\partial B}\phi^*\gamma\wedge\pi_2^*\eta,
\]
\vspace{0.4cm}
\[\eT^{\mathrm{II}}_t:\Omega^k(P)\ra \mathscr{D}'_{n-k+1}(B,\partial B),\]\[\eT^{\mathrm{II}}_t(\eta)(\omega,\gamma)=\left(-\int_{[0,t]\times B}\pi_2^*\omega\wedge \phi^*\eta,\int_{[0,t]\times \partial B}\pi_2^*\gamma\wedge \phi^*\eta\right)
\]
where $\pi_2:[0,t]\times B\ra B$ is the obvious projection. Recalling (\ref{deq1}) and (\ref{deq2}) we have:

\begin{prop}\label{heq}\begin{itemize}
\item[] \begin{equation}\label{heq1} \eT^{\mathrm{I}}_t(d(\omega,\gamma))+(-1)^{k-1}d\eT^{\mathrm{I}}_t(\omega,\gamma)=\mathscr{L}_I(\phi_t^*\omega,\phi_t^*\gamma)-\mathscr{L}_I(\phi_0^*\omega,\phi_0^*\gamma).
\end{equation}
\item[]
\vspace{-0.5cm}
\begin{equation}\label{heq2} (-1)^{n-k} \eT^{\mathrm{II}}_t(d\eta)+d\eT^{\mathrm{II}}_t(\eta)=\mathscr{L}_{II}(\phi_t^*\eta)-\mathscr{L}_{II}(\phi_0^*\eta).
\end{equation}
\end{itemize}
\end{prop}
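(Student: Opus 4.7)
The plan is to derive both homotopy formulas by the same three-step routine: test each side against a form on $B$ (respectively a pair of forms on $(B,\partial B)$); use the graded Leibniz rule to convert every occurrence of $d\eta$ under an integral into a total differential minus an interior correction; and apply Stokes, in the form (\ref{Steq1}) on the cylinder $[0,t]\times B$ together with ordinary Stokes on the side face $[0,t]\times\partial B$ (whose only boundary components are $\{0,t\}\times\partial B$, since $\partial(\partial B)=\emptyset$). The identity
\[
\phi^*\omega\wedge d\pi_2^*\eta=(-1)^{|\omega|}\bigl(d(\phi^*\omega\wedge\pi_2^*\eta)-\phi^*d\omega\wedge\pi_2^*\eta\bigr)
\]
and its analogue for $\phi^*\gamma$ on $[0,t]\times\partial B$ are the only algebraic inputs.

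For (\ref{heq1}), I would test both sides against an arbitrary $\eta\in\Omega^{n-k}(B)$. Expanding the LHS using (\ref{deq1}) produces three integrals: an interior $\phi^*d\omega$-integral over $[0,t]\times B$, a side-face $\phi^*\omega$-integral arising from $\iota^*\omega$, and a side-face $\phi^*d\gamma$-integral. On the other side, $(-1)^{k-1}d\eT^{\mathrm{I}}_t(\omega,\gamma)(\eta)=(-1)^{k-1}\eT^{\mathrm{I}}_t(\omega,\gamma)(d\eta)$ splits into an $\omega$-piece and a $\gamma$-piece. Running Leibniz and Stokes on $[0,t]\times B$ for the $\omega$-piece produces (i) the endpoint integrals $\int_B\phi_t^*\omega\wedge\eta-\int_B\phi_0^*\omega\wedge\eta$, (ii) a side-face $\phi^*\omega$-integral that cancels its counterpart from the LHS, and (iii) an interior $\phi^*d\omega$-integral that cancels the LHS interior term. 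Running Leibniz and Stokes on $[0,t]\times\partial B$ for the $\gamma$-piece yields the endpoint integrals $\int_{\partial B}\phi_t^*\gamma\wedge\iota^*\eta-\int_{\partial B}\phi_0^*\gamma\wedge\iota^*\eta$ together with a side-face $\phi^*d\gamma$-integral that cancels the LHS $d\gamma$ term. What survives is exactly $\mathscr{L}_I(\phi_t^*\omega,\phi_t^*\gamma)(\eta)-\mathscr{L}_I(\phi_0^*\omega,\phi_0^*\gamma)(\eta)$.

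The proof of (\ref{heq2}) mirrors this componentwise, unfolding $d\eT^{\mathrm{II}}_t(\eta)$ through the pair-differential (\ref{deq2}) into $(\iota_*S-dT,\,dS)$. Testing the first slot against $\omega\in\Omega^{n-k}(B)$ and running the same Leibniz/Stokes routine on $[0,t]\times B$ produces $\int_B\omega\wedge\phi_t^*\eta-\int_B\omega\wedge\phi_0^*\eta$ (the first slot of $\mathscr{L}_{II}(\phi_t^*\eta)-\mathscr{L}_{II}(\phi_0^*\eta)$), a side-face contribution that exactly supplies the $\iota_*S(\omega)$ term, and an interior correction that cancels the first slot of $(-1)^{n-k}\eT^{\mathrm{II}}_t(d\eta)$. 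The second-slot identity, tested against $\gamma\in\Omega^{n-k-1}(\partial B)$, then falls out from Stokes on $[0,t]\times\partial B$ alone, contributing the $\partial B$-endpoint integrals that assemble into the second slot of $\mathscr{L}_{II}(\phi_t^*\eta)-\mathscr{L}_{II}(\phi_0^*\eta)$.

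The only real difficulty is sign bookkeeping. The global factors $(-1)^{k-1}$ in (\ref{heq1}) and $(-1)^{n-k}$ in (\ref{heq2}) must match the signs produced by Leibniz (which depend on $|\omega|$ or $|\eta|$), the side-face sign attached to $[0,t]\times\partial B$ in (\ref{Steq1}) under the orientation convention fixed at the start of this section, and the sign conventions (\ref{deq1}), (\ref{deq2}) for the pair differentials. Verifying this slot by slot is unavoidable but routine, and is essentially a parametrized version of the already-verified compatibility (\ref{eq2'}).
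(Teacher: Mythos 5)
Your proposal is correct and follows essentially the same route as the paper: test $\eT^{\mathrm{I}}_t$ and $\eT^{\mathrm{II}}_t$ against forms (resp.\ pairs), combine the $d\omega$-, $d\gamma$- and $d\eta$-terms into total differentials via the graded Leibniz rule, and apply Stokes in the form (\ref{Steq1}) on $[0,t]\times B$ plus ordinary Stokes on the cylinder $[0,t]\times\partial B$, with the side-face terms cancelling the $\iota^*\omega$ (resp.\ $\iota_*S$) contributions and only the endpoint integrals surviving. The cancellation pattern and sign conventions you describe match the paper's computation exactly.
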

\begin{proof} For (\ref{heq1}):
\[\eT^{\mathrm{I}}_t(d(\omega,\gamma))(\eta)=\eT^{\mathrm{I}}_t(-d\omega,\iota^*\omega+d\gamma)(\eta)=\]\[=
\left(\int_{[0,t]\times B}d\phi^*\omega\wedge\pi_2^*\eta+\int_{[0,t]\times \partial B}\phi^*\iota^*\omega\wedge \pi_2^*\eta\right)+\int_{[0,t]\times\partial B}d\phi^*\gamma\wedge\pi_2^*\eta.
\]
\[(-1)^{k-1}\eT^{\mathrm{I}}_t(\omega,\gamma)(d\eta)=(-1)^k\int_{[0,t]\times B}\phi^*\omega\wedge d\pi_2^*\eta+(-1)^{k-1}\int_{[0,t]\times \partial B}\phi^*\gamma\wedge d\pi_2^*\eta.
\]
Summing up and using Stokes (\ref{Steq1}) one gets:
\[\eT^{\mathrm{I}}_t(d(\omega,\gamma))(\eta)+(-1)^{k-1}\eT^{\mathrm{I}}_t(\omega,\gamma)(d\eta)=\int_B\phi_t^*\omega\wedge \eta-\int_{B}\phi_0^*\omega\wedge\eta+\int_{\partial B}\phi_t^*\gamma\wedge\eta-\int_{\partial B}\phi_0^*\gamma\wedge\eta
\]
For (\ref{heq2}):
 \[ (-1)^{n-k}\eT^{\mathrm{II}}_t(d\eta)(\omega,\gamma)=(-1)^{n-k}\left( -\int_{[0,t]\times B}\pi_2^* \omega\wedge d\phi^*\eta, \int_{[0,t]\times \partial B} \pi_2^*\gamma\wedge d\phi^*\eta \right).
\]
\[  \left(d\eT^{\mathrm{II}}_t(\eta)\right)(\omega,\gamma)=\left(\int_{[0,t]\times \partial B}\pi_2^*\iota^*\omega\wedge \phi^*\eta+\int_{[0,t]\times B}d\pi_2^*\omega\wedge \phi^*\eta,\int_{[0,t]\times \partial B} d\pi_2^*\gamma\wedge\phi^*\eta\right)
\]
Summing up and using Stokes again one gets:
\[(-1)^{n-k}\eT^{\mathrm{II}}_t(d\eta)(\omega,\gamma)+\left(d\eT^{\mathrm{II}}_t(\eta)\right)(\omega,\gamma)=\left(\int_B\omega\wedge(\phi_t^*\eta-\phi_0^*\eta),\int_{\partial B}\gamma\wedge(\phi_t^*\eta-\phi_0^*\eta)\right)
\]
\end{proof}

\begin{corollary}
\begin{itemize}
\item[(a)] If $\phi:[0,t]\times B\ra P$ is a boundary compatible homotopy and $(\omega,\gamma)\in \Omega^k(P,\partial P)$ is a closed pair then $\mathscr{L}_I(\phi_t^*\omega,\phi_t^*\gamma)$ and $\mathscr{L}_I(\phi_0^*\omega,\phi_0^*\gamma)$ represent the same homology class in $B$.
\item[(b)] If $\phi:[0,t]\times B\ra P$ is a boundary compatible homotopy and $\eta\in \Omega^k(P,\partial P)$ is a closed form then $\mathscr{L}_{II}(\phi_t^*\eta)$ and $\mathscr{L}_{II}(\phi_0^*\eta)$ represent the same relative homology class in $B$.
\end{itemize}
\end{corollary}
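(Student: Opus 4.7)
The plan is to derive both statements as direct consequences of Proposition \ref{heq}. The key observation is that specializing equations (\ref{heq1}) and (\ref{heq2}) to the closed case makes the right-hand difference exact in the appropriate current complex.

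For part (a): since $(\omega,\gamma)$ is closed we have $d(\omega,\gamma)=0$, so (\ref{heq1}) reduces to
\[
(-1)^{k-1}\, d\,\eT^{\mathrm{I}}_t(\omega,\gamma)=\mathscr{L}_I(\phi_t^*\omega,\phi_t^*\gamma)-\mathscr{L}_I(\phi_0^*\omega,\phi_0^*\gamma),
\]
exhibiting the difference as a boundary in $\mathscr{D}'_*(B)$. Before concluding, I would first check that both sides are legitimate homology classes, i.e.\ that $(\phi_s^*\omega,\phi_s^*\gamma)$ is a closed pair in $\Omega^k(B,\partial B)$ for $s=0,t$. This is where boundary compatibility of $\phi$ enters: it guarantees $\phi_s\circ\iota=\iota\circ(\phi_s|_{\partial B})$, whence
\[
\iota^*\phi_s^*\omega+d\phi_s^*\gamma=(\phi_s|_{\partial B})^*(\iota^*\omega+d\gamma)=0,
\]
and $d\phi_s^*\omega=\phi_s^* d\omega=0$. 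Combined with the intertwining (\ref{eq2'}) of $\mathscr{L}_I$ with the differentials, this makes both $\mathscr{L}_I(\phi_s^*\omega,\phi_s^*\gamma)$ closed currents and the displayed identity the assertion that they are homologous.

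For part (b): since $\eta$ is closed, (\ref{heq2}) reduces to
\[
d\,\eT^{\mathrm{II}}_t(\eta)=\mathscr{L}_{II}(\phi_t^*\eta)-\mathscr{L}_{II}(\phi_0^*\eta),
\]
which is precisely the statement that the two relative currents differ by a boundary in $\mathscr{D}'_*(B,\partial B)$. Closedness of $\phi_s^*\eta$ follows from $d\phi_s^*\eta=\phi_s^*d\eta=0$, and combined with the fact that $\mathscr{L}_{II}$ commutes with $d$ up to sign this guarantees that each $\mathscr{L}_{II}(\phi_s^*\eta)$ is a closed relative current.

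There is no serious obstacle here: the heavy lifting was already done in Proposition \ref{heq}. The only point requiring a moment's attention is ensuring that the objects $(\phi_s^*\omega,\phi_s^*\gamma)$ and $\phi_s^*\eta$ represent genuine classes in the target complexes, which is precisely what boundary compatibility of the homotopy ensures.
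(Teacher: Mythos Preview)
Your argument is correct and is exactly the intended one: the paper states this Corollary without proof precisely because it is immediate from Proposition~\ref{heq} upon setting $d(\omega,\gamma)=0$ (resp.\ $d\eta=0$) in (\ref{heq1}) (resp.\ (\ref{heq2})). Your extra check that the pulled-back pairs are themselves closed, using boundary compatibility and (\ref{eq2'}), is a welcome clarification but not a departure from the paper's approach.
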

In the next section we will analyze a situation where  $\mathscr{L}_{II}(\phi_t^*\eta)$ and $\mathscr{L}_I(\phi_t^*\omega,\phi_t^*\gamma)$ degenerate as $t\ra \infty$ to rectifiable currents.

\section{Chern-Gauss-Bonnet on manifolds with boundary}

 If $M$ is an oriented, Riemannian manifold with boundary of dimension $n$ with $n$ even and $\nabla$ is the Levi-Civita connection then Chern proved in \cite{Ch2} the following:
\begin{equation}\label{eq5.01} \chi(M)=\int_M\Pf(\nabla)-\int_{\partial M}\TPf(\nabla).
\end{equation}
Here $\chi(M)$ is the Euler characteristic of the manifold, $\Pf(\nabla)$ is the Pfaffian form associated to the curvature $F(\nabla)$, while $\TPf(\nabla)$ is a transgression form. Recall that transgression forms for an invariant polynomial $P$ arise in general by "comparing" the closed forms associated to  two different connections via  $P$. The construction in the case of the Pfaffian proceeds as follows. Let $\nabla^1,\nabla^2$ be two \emph{metric compatible} connections on a vector bundle $E\ra B$ and let 
\begin{equation}\label{tina} \tina=\frac{d}{dt}+(1-t)\nabla^1+t\nabla^2
\end{equation}
be a connection on $\pi_2^*E\ra B$ where $\pi_2:[0,1]\times B\ra B$ is the projection. The operator $\frac{d}{dt}$ is a partial connection:
\[ \frac{d}{dt}s:=\frac{\partial s}{\partial t}dt
\]
and extends via the usual Leibniz rule to act on forms with values in $E$. Then
\begin{equation}\label{transdef} \TPf(\nabla^1,\nabla^2):=\int_{[0,1]}\Pf(\tilde{\nabla}),
\end{equation}
where $\int_{[0,1]}$ represents integration over the fiber of $\pi_2$. Then
\begin{equation}\label{hfor} \Pf(\nabla^2)-\Pf(\nabla^1)=d\TPf(\nabla_1,\nabla_2),
\end{equation}
which follows from  the homotopy formula that we recall. Let $H:[0,1]\times N\ra M$ be a smooth homotopy between smooth manifolds and assume that $N$ compact. Then:
\begin{equation}\label{eq6.17} \int_{[0,1]}H^*d\omega+d\left(\int_{[0,1]}H^*\omega\right)=H_1^*\omega-H_0^*\omega,\qquad \forall \omega\in \Omega^*(M).
\end{equation}
In order to get (\ref{hfor}), one applies this to $\id:I\times B\ra I\times B$ and $\omega=\Pf(\tilde{\nabla})$  taking into account that
\[ F(\tilde{\nabla})=dt\wedge(\nabla^2-\nabla^1)+F(\nabla_t),\qquad \nabla_t=(1-t)\nabla^1+t\nabla^2.
\]
\[ \iota_t^*(F(\tilde{\nabla}))=F(\nabla_t),\;\;\qquad\forall \iota_t \; \mbox{slice inclusion}.
\]

 In the context of Chern's theorem one considers $\nabla^2=\nabla$ to be the Levi-Civita connection on $M$. Use the normal exponential    in order to identify a collar neighborhood $\partial M\subset U\simeq \partial M\times [0,1)$ of $\partial M$. Put  the  product metric on the collar, and consider $\nabla^{\nu}$ to be the Levi-Civita connection of the product metric . Notice that the unit normal vector becomes parallel and  consequently:
\[ \Pf({\nabla}^{\nu})=0.
\]
Along the collar neighborhood $U$ we get according to (\ref{hfor})
\[ \Pf(\nabla)=d\TPf(\nabla^{\nu},\nabla).
\]
The form $\TPf(\nabla):=\TPf(\nabla^{\nu},\nabla)$ is what appears in (\ref{eq5.01}).

Chern Theorem on a boundaryless manifold is  a combination of a Poincar\'e Duality statement which says that the Poincar\'e dual of the Pfaffian is the $0$-dimensional curent obtained by counting the isolated zeros of a vector field $X:M\ra TM$ in generic (transversal conditions) and Poincar\'e-Hopf Theorem  which insures that:
\[ \chi(M)=\#([X^{-1}(0)]),
\]
where $\#$ is the (signed) counting function. In the case of a manifold with boundary the situation is no different. However, when generalizing this to oriented, Riemannian vector bundles $E\ra B$ of even rank one has to pay attention to one point. The construction of the boundary integrand  in (\ref{eq5.01}) depends in an essential way on the fact that there exists a natural non-vanishing section of $TM\bigr|_{\partial M}$.

 For a general oriented vector bundle $E\ra B$  there exists a topological obstruction for that to happen, namely $e(E\bigr|_{\partial B})\in H^k(\partial B;\bR)$  might  be non-zero. In fact, that is the only obstruction. We proceed now to proving such a general version.

\begin{definition}\label{transdef} Let $B$ and $P$ be two manifolds with boundary an let $s:(B,\partial B)\ra (P,\partial P)$ be a smooth map of pairs, i.e. $s(\partial B)\subset \partial P$. Let $(N,\partial N)\subset(P,\partial P)$ be a submanifold with boundary. Then $s$ is  transversal to  $N$ if both  $s\bigr|_{\inte{M}}$ and $s\bigr|_{\partial M}$ are transverse to $N$ respectively $\partial N$.
\end{definition}
It is not hard to show that in the conditions of  Definition \ref{transdef}, 
\[\left(s^{-1}(N),\left(s\bigr|_{\partial B}\right)^{-1}(\partial N)\right)\subset (B,\partial B)\]
 is a submanifold with boundary.

 When $\pi:E\ra B$ is a Riemannian vector bundle of  rank $2k$ endowed with a connection $\nabla$ and $s:\partial B\ra E$ is a non-vanishing section then consider on $E\bigr|_{\partial B}$ the connection $\nabla^1$ defined as follows. Over the line bundle $\langle s\rangle$ trivialized via the normalized section $\frac{s}{|s|}$
 put the trivial connection. Over $\langle s\rangle^{\perp}$ consider the orthogonal projection $\iota^*\nabla^{\perp}$ of $\nabla$. Then
 \[ \nabla^1=d\oplus \nabla^{\perp}.
 \] 
 
   Now just as in (\ref{hfor}) there exists a form $\TPf(\nabla,s)\in\Omega^{2k-1}(\partial B)$  such that:
   \[ d\TPf(\nabla,s)=\Pf(\iota^*\nabla)-\Pf(\nabla^1)=\iota^*\Pf(\nabla).
 \]
 \begin{remark} We emphasize that it is important to trivialize the line bundle $\langle s\rangle$ via the \emph{normalized} section $\frac{s}{|s|}$ if $\nabla^1$ is to be metric compatible.
 \end{remark}
 
 The main result of this section is:

\begin{theorem}\label{CGB} Let $E\ra B$ be an oriented Riemannian vector bundle of even rank $2k$ endowed with a metric compatible connection $\nabla$ over the compact manifold with boundary $B$. Let $s:B\ra E$ a section which is transversal to the zero section of $E$.
\begin{itemize} 
\item[(a)]  Then $s^{-1}(0)$ and $\mathscr{L}_{II}(\Pf(\nabla))$ determine the same class in $H_{n-2k}(\mathscr{D}'_*(B,\partial B))$ .
\item[(b)] If  $s\bigr|_{\partial B}$ does not vanish, then  there exists a form $\TPf(\nabla,s)$ such that \linebreak $\mathscr{L}_I(\Pf(\nabla),-\TPf(\nabla,s))$ and $s^{-1}(0)$ represent the same class in $ H_{n-2k}(\mathscr{D}'_*(B))$.
\end{itemize}
\end{theorem}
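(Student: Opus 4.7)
My plan is to treat both (a) and (b) by a single argument that combines the canonical transgression of the Pfaffian on $E$ minus its zero section with a local Poincar\'e-Hopf computation around a shrinking tubular neighborhood of $s^{-1}(0)$. First, the tautological section $s^\tau(v)=v$ of $\pi^*E\to E\setminus 0$ is nowhere zero, so one forms the metric connection $\nabla^1 = d\oplus (\pi^*\nabla)^\perp$ on $\pi^*E|_{E\setminus 0}$, for which $\Pf(\nabla^1)\equiv 0$, and the associated transgression $\TPf(\pi^*\nabla, s^\tau)\in \Omega^{2k-1}(E\setminus 0)$ satisfying $d\TPf(\pi^*\nabla, s^\tau)=\Pf(\pi^*\nabla)$. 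Pulling back via $s$ gives $\TPf(\nabla, s):=s^*\TPf(\pi^*\nabla, s^\tau)$ on $B\setminus s^{-1}(0)$, a $(2k-1)$-form whose differential is $\Pf(\nabla)$ there and whose restriction to $\partial B$ (in the setting of (b)) is the transgression form appearing in the statement. In particular, the pair $(\Pf(\nabla), -\TPf(\nabla,s))$ is closed in $\Omega^{2k}(B,\partial B)$.

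For (b), fix a closed $\eta\in\Omega^{n-2k}(B)$ and an $\epsilon$-tube $W_\epsilon\subset\mathrm{int}(B)$ of $s^{-1}(0)$. Applying Stokes to $\Pf(\nabla)\wedge\eta=d(\TPf(\nabla,s)\wedge\eta)$ on $B\setminus W_\epsilon$ expresses $\int_B\Pf(\nabla)\wedge\eta$, up to an $o(1)$ contribution from $W_\epsilon$, as the $\partial B$--integral $\int_{\partial B}\TPf(\nabla,s)\wedge\iota^*\eta$ minus a boundary integral over $\partial W_\epsilon$. Transversality identifies the normal bundle of $s^{-1}(0)$ in $B$ with $E|_{s^{-1}(0)}$ via $ds$, realises $s$ in a tubular neighborhood as the radial section, and reduces the $\partial W_\epsilon$--integral to fibre integration over $S^{2k-1}_\epsilon$; Chern's original sphere computation then produces, as $\epsilon\to 0$, the value $\int_{s^{-1}(0)}\eta$ (the sign being absorbed by the minus in $-\TPf(\nabla,s)$). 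The resulting identity is $\mathscr{L}_I(\Pf(\nabla),-\TPf(\nabla,s))(\eta)=[s^{-1}(0)](\eta)$ for every closed $\eta$, and non-degeneracy of the Lefschetz pairing (Theorem \ref{LefDu}) promotes this to equality of classes in $H_{n-2k}(\mathscr{D}'_*(B))$.

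For (a), $s$ may vanish on $\partial B$ and $s^{-1}(0)$ acquires a boundary $\partial s^{-1}(0)\subset\partial B$. Repeat the Stokes argument simultaneously on $B$, with a tube $W_\epsilon$ of $s^{-1}(0)$, and on $\partial B$, with a tube $W_\epsilon^\partial$ of $\partial s^{-1}(0)$. For a closed pair $(\omega,\gamma)\in\Omega^{n-2k}(B,\partial B)$ the relation $\iota^*\omega=-d\gamma$ is used to exchange the Dirichlet term arising from Stokes in $B$ for one to which Stokes on $\partial B\setminus W_\epsilon^\partial$ can be applied again; the two sphere-bundle limits contribute $\int_{s^{-1}(0)}\omega$ and $\int_{\partial s^{-1}(0)}\gamma$ respectively, yielding
\[
\int_B\omega\wedge\Pf(\nabla)+\int_{\partial B}\gamma\wedge\iota^*\Pf(\nabla)=\int_{s^{-1}(0)}\omega+\int_{\partial s^{-1}(0)}\gamma.
\]
This is the pairing of $\mathscr{L}_{II}(\Pf(\nabla))$ and $([s^{-1}(0)],[\partial s^{-1}(0)])$ on every closed test pair, hence equality of classes in $H_{n-2k}(\mathscr{D}'_*(B,\partial B))$ by Lefschetz duality.

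The main obstacle is the rigorous sphere-bundle limit
\[
\lim_{\epsilon\to 0}\int_{\partial W_\epsilon}\TPf(\nabla,s)\wedge\eta=\pm\int_{s^{-1}(0)}\eta
\]
and its analogue on $\partial B$. Although morally the local Poincar\'e-Hopf formula, one has to show that the deviation between $\TPf(\nabla,s)$ in a tubular neighborhood and its ``straight radial'' model contributes $o(1)$ after integration against the $O(\epsilon^{2k-1})$--volume fibre: uniform boundedness of the transgression on the shrinking sphere suffices since the integrand has degree $2k-1$ equal to the fibre dimension. Careful bookkeeping of signs (the orientation of $\partial W_\epsilon$ as boundary of $W_\epsilon$ versus boundary of $B\setminus W_\epsilon$, and the minus in front of $\TPf(\nabla,s)$ in the Lefschetz pair) is what makes the final identity come out cleanly, and this is the same flat-topology-of-currents limit argument in the Harvey-Lawson tradition referenced in the introduction.
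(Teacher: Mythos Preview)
Your approach is sound and takes a genuinely different route from the paper. You work directly on $B$: having the universal transgression $\TPf(\pi^*\nabla,s^\tau)$ on $E\setminus 0$ with $d\TPf=\Pf(\pi^*\nabla)$, you pull back by $s$, apply Stokes on the complement of a shrinking tube around $s^{-1}(0)$, and identify the residual sphere-bundle integral via the classical fibre computation $\int_{S^{2k-1}}\TPf=-1$. For part (a) you iterate Stokes on $\partial B$ using $\iota^*\omega=-d\gamma$. This is essentially Chern's 1945 method transported to the relative setting.

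The paper instead compactifies fibrewise to $P=S(\bR\oplus E)\to B$, takes on it the family Pfaffian $\Pf(\nabla^{\tau^\perp})$, and \emph{flows} the section $s$ (embedded by inverse stereographic projection) under the vertical gradient of the height function. The technical core is Theorem~\ref{transfor}, which computes $\lim_{t\to\pm\infty}\mathscr{L}_I$ and $\lim_{t\to\pm\infty}\mathscr{L}_{II}$ of the flowed pull-backs as explicit rectifiable currents via a blow-up of $s^{-1}(0)$ in $B$ and of the pole sections in $P$; the homotopy operators of Proposition~\ref{heq} show all these limits are homologous, and comparing $t\to+\infty$ with $t\to-\infty$ gives the result (a factor of $2=\chi(S^{2k})$ appears and cancels). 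Your route is more elementary---no compactification, no blow-ups, only ordinary convergence of smooth integrals---while the paper's route packages the limit computation as a reusable statement about vertical flows, which is precisely what is recycled in Section~7 for the odd-rank case. Your closing remark that the sphere-bundle limit ``is the same flat-topology-of-currents limit argument'' somewhat understates the difference: yours is a classical $\epsilon\to 0$ estimate, theirs a genuine current limit under a flow.

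One point to make explicit in your write-up of (a): deducing equality in $H_{n-2k}(\mathscr{D}'_*(B,\partial B))$ from agreement of the pairing against every closed $(\omega,\gamma)\in\Omega^{n-2k}(B,\partial B)$ requires $H_k(\mathscr{D}'_*(B,\partial B))\cong (H^k(B,\partial B))^*$, the relative analogue of Lemma~\ref{HMD}. This is not stated separately in the paper but follows by the same Hahn--Banach argument, or equivalently from the fact that $\mathscr{L}_{II}$ is an isomorphism combined with Theorem~\ref{LefDu}.
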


The set-up for the proof is as follows.  Let $P:=S(\bR\oplus E)\ra B$ be the even dimensional spherical bundle associated to $\bR\oplus E$. Notice that $P$ is a compact manifold with boundary $\partial P$ such that the projection $\pi:P\ra B$ restricts to a fiber bundle map: $\pi:\partial P\ra \partial B$.

 Clearly $E$ embeds into $P$ via the inverse to the stereographic projection which we take to be:
\[\eSt: E\hookrightarrow S(\bR\oplus E),\qquad \eSt(v)=\frac{1}{1+|v|^2}(1-|v|^2,2v),\quad\quad v\in E.
\]
Notice that $0\in E$ corresponds to $(1,0)\in \bR\oplus E$. 

Denote by $\varphi_0:B\ra P$ the section $s$ with target $P$.

Now consider the vertical gradient flow  of the (negative height) function:
\[f:S(\bR\oplus E)\ra \bR,\qquad f(t,v)=-t
\]
Denote by $[0]$ and $[\infty]$ the two critical manifolds of  $X:=\nabla^V f$, the vertical gradient of $f$, which correspond to the north pole, resp. south pole sections of $P\ra B$. Notice that $P\ra B$ comes with two canonical smooth sections, namely the north pole/south pole sections. We sometimes  call them the zero/infinity sections for the obvious reasons.

  The field $X$ is undoubtedly the simplest example of a vertical, tame, horizontally constant, Morse-Bott-Smale vector field, using the terminology of \cite{Ci}. The flow it generates $\Theta:\bR\times P\ra P$ is boundary compatible, i.e. $\Theta(\bR\times \partial P)\subset \partial P$.  As a consequence, the stable manifolds:
\[ S([0])=[0]; \qquad S([\infty])=P\setminus [0],
\]
and the unstable manifolds
\[ U([0])=P\setminus [\infty];\qquad U([\infty])=[\infty].
\]
are all manifolds with boundary contained in $\partial P$.

Let $\varphi_t:B\ra P$ be the $1$-parameter family of sections induced by the flow  of $X$:
\[ \varphi_t(b):=\Theta_t(\varphi_0(b)).
\]

 If $\omega$ is a smooth form of degree $k$ on $B$ and $N$ is an oriented compact submanifold of dimension $p\geq k$ then
\[ \omega\wedge [N](\eta):=\int_N\omega\wedge \eta,\qquad \forall \eta \in\Omega^{p-k}(B),
\]
while
\[ [N]\wedge \omega(\eta):=\int_N \eta\wedge \omega,\qquad \forall \eta \in\Omega^{p-k}(B).
\]
The next Theorem parallels the general results obtained in \cite{Ci} in the case of the particular vertical flow induced by $X$. 
\begin{theorem} \label {transfor}Let $\eta\in \Omega^{2k}(P)$ be a closed form of degree equal to the dimension of the fiber of $P\ra B$ and let $(\omega,\gamma)\in \Omega^{2k}(P,\partial P)$ be a closed pair. Suppose $\varphi_0$ is transversal to $[0]$. Then
\begin{itemize}
\item[(a)]
\begin{equation}\label{flim}\lim_{t\ra \infty}\mathscr{L}_{I}(\varphi_t^*\omega,\varphi_t^*\gamma)=\left(\int_{P/B}\omega\right)\cdot [\varphi^{-1}_0[0]]+\widehat{\omega_{[\infty]}}\wedge [B]+\iota_*(\widehat{\gamma_{[\infty]}}\wedge [\partial B]),
\end{equation}
where $\widehat{\omega_{[\infty]}}$ and $\widehat{\gamma_{\infty}}$ are the restrictions of $\omega$ and $\gamma$  to $[\infty]$, pulled back via the infinity section to $B$ and $\partial B$ respectively.
\item[(b)]
\begin{equation}\label{seclim} \lim_{t\ra \infty}\mathscr{L}_{II}(\varphi_t^*\eta)=\left(\left(\int_{P/B} \eta\right)\cdot [\varphi_0^{-1}[0]]+[B]\wedge \widehat{\eta_{[\infty]}},\left(\int_{\partial P/\partial B} \eta\right)\cdot [\varphi_{0,\partial B}^{-1}[0]]+[\partial B]\wedge \widehat{\eta_{[\infty]}}\right)
\end{equation}
\end{itemize}
\end{theorem}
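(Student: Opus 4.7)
My plan is to compute both limits pointwise outside the zero locus $\varphi_0^{-1}[0] = s^{-1}(0)$, and to handle separately a delta-type concentration near it. The starting ingredient I would isolate is the explicit form of the flow: on a single sphere fiber $S^{2k}$, the vertical gradient of $f(t,v) = -t$ integrates in stereographic coordinates to fiberwise dilation, so
\[
 \Theta_r(\eSt(v)) = \eSt(e^r v), \qquad v \in E,
\]
as one checks from the ODE $\dot\theta = \sin\theta$ on $S^1$ and its analogue in every dimension. Hence $\varphi_r = \eSt \circ m_{e^r} \circ s$, where $m_\lambda\colon E \to E$ denotes fiberwise multiplication by $\lambda$.

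Second, I would establish pointwise convergence away from $s^{-1}(0)$. For each $b$ with $s(b)\neq 0$ one has $\varphi_r(b) \to [\infty](b)$ at rate $O(e^{-r})$, and a chain-rule computation shows that the horizontal component of $d\varphi_r|_b$ equals the identity on $T_bB$ while its vertical component is $O(e^{-r})$. Consequently $\varphi_r^*\omega \to \widehat{\omega_{[\infty]}}$ and $(\varphi_r|_{\partial B})^*\gamma \to \widehat{\gamma_{[\infty]}}$ uniformly on compact subsets of $B\setminus s^{-1}(0)$ and $\partial B\setminus s^{-1}(0)$ respectively; analogously for $\eta$ and $\eta|_{\partial P}$. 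Paired against test forms supported outside $s^{-1}(0)$, these deliver the contributions $\widehat{\omega_{[\infty]}}\wedge [B]$ and $\iota_*(\widehat{\gamma_{[\infty]}}\wedge [\partial B])$ in (\ref{flim}), together with the parallel ``away'' terms in (\ref{seclim}).

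Third, I would carry out the concentration near $s^{-1}(0)$. By transversality of $\varphi_0$ to $[0]$, equivalently of $s$ to the zero section of $E$, choose a tubular neighborhood $U \supset s^{-1}(0)$ and a local framing of its normal bundle so that, on a component $Z$ of $s^{-1}(0)$, one has coordinates $(x,y) \in Z \times D^{2k}_\rho$ in which $s(x,y) = L(x)y + O(|y|^2)$ for an invertible linear map $L(x)$. Then $\varphi_r^*\omega|_U = s^* m_{e^r}^* \eSt^*\omega$, and for a test form $\beta \in \Omega^{n-2k}(B)$ supported in $U$ the change of variable $y' = e^r y$ combined with the smoothness of $\eSt^*\omega$ on the compactified fiber $P_b$ gives
\[
 \int_U \varphi_r^*\omega \wedge \beta \;\xrightarrow[r \to \infty]{}\; \int_Z \iota^*\!\Bigl(\int_{P/B}\omega\Bigr) \cdot \iota^*\beta .
\]
The reason is that $m_{e^r}^*\eSt^*\omega$ is a fiberwise-top form whose total fiberwise integral is the constant $\int_{P_b}\omega$ and which concentrates on $y=0$ as $r \to \infty$; the remainder $O(|y|^2)$ and the $x$-dependence of the framing are absorbed in the limit because the concentration takes place at $y=0$ only. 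This produces the term $\bigl(\int_{P/B}\omega\bigr) \cdot [\varphi_0^{-1}[0]]$.

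For the boundary in part (a), the form $\gamma$ has fiber-degree $2k-1$, one below the fiber dimension, so an analogous dimension count shows that $m_{e^r}^*\eSt^*\gamma$ paired against a complementary test form is $O(e^{-r})$; no delta concentration arises on $\partial B$, and only the away limit $\widehat{\gamma_{[\infty]}}\wedge[\partial B]$ survives, packaged by $\iota_*$ because $\mathscr{L}_I$ pushes the boundary factor into $\mathscr{D}'_*(B)$. Part (b) is entirely parallel: on $\partial B$, the restriction $\eta|_{\partial P}$ retains fiber-top degree $2k$ along $\partial P \to \partial B$ and so contributes both the concentration $\bigl(\int_{\partial P/\partial B}\eta\bigr)\cdot[\varphi_{0,\partial B}^{-1}[0]]$ and the away term $[\partial B]\wedge \widehat{\eta_{[\infty]}}$. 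I expect the principal obstacle to be the concentration step: controlling simultaneously the non-linear correction in $s$, the variation of the local framing, and the passage to the limit as convergence of currents. These points are handled by dominated convergence applied to the dilated densities, using integrability of $\eSt^*\omega$ on each fiber $E_b$, which follows from smooth extendibility of $\omega$ across $[\infty]$.
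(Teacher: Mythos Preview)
Your approach is genuinely different from the paper's and, at the level of ideas, sound. The paper resolves the limit geometrically: it blows up $\varphi_0^{-1}[0]$ in $B$ and $[0]\cup[\infty]$ in $P$, obtaining compact manifolds with corners $\hat B$ and $\hat P=[-1,1]\times S(E)$, lifts $\varphi_t$ to maps $\hat\varphi_t$ that extend smoothly to $t=\infty$, and then reads off the limit current as (the $\Bl$-pushforward of) the boundary of the swept-out region $Q=\overline{\bigcup_t\hat\varphi_t(\hat B)}$ via Stokes. Your route instead exploits the explicit identification of the flow with fiberwise dilation $v\mapsto e^rv$ in stereographic coordinates (which is correct), splitting into pointwise convergence off the zero locus and a rescaling computation near it. Each buys something: the blow-up is conceptually cleaner, delivers convergence in the flat (indeed mass) topology with no estimates, and visibly generalizes to other vertical Morse--Bott flows; your argument is more hands-on, needs no blow-up machinery, and makes the concentration mechanism explicit. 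Your dimension count for why $\gamma$ produces no boundary concentration (its fiber degree is $2k-1<2k$, forcing the rescaled test form to carry at least one factor $e^{-r}$) is also correct.

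The weak point is exactly where you place it, but your proposed fix is not quite enough. After the substitution $y'=e^ry$ the domain expands to $Z\times D^{2k}_{e^r\rho}$, and you need a majorant on $Z\times\bR^{2k}$ independent of $r$. Pointwise integrability of $\eSt^*\omega$ on each fiber does not supply this: for $|y'|$ of order $e^r$ the remainder $e^{-r}|y'|^2$ in your expansion of $s$ is comparable to the linear term $L(x)y'$, so the rescaled map is \emph{not} close to $\eSt(L(x)y')$ there, and the fiberwise decay of the limit integrand says nothing about the actual integrand. What does work is a uniform mass bound for the graph currents $(\varphi_r)_*[U]$ inside the compact manifold $P$ (equivalently, a direct estimate on $\sqrt{\det(I+d\varphi_r^{\,T}d\varphi_r)}$ over $U$, splitting at the scales $|y|\sim e^{-r}$ and $|y|\sim e^{-r/2}$); this is true and, combined with your pointwise identification, closes the argument by compactness of integral currents. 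The paper's blow-up is precisely the device that makes this step automatic: on $\hat P$ everything is smooth up to the boundary and no integrability issue ever appears.
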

\begin{remark} All fiber integrals which appear in the statement of Theorem \ref {transfor} are in fact constant functions, since all forms to be integrated are closed. 
\end{remark}
\begin{proof} We will use the same geometric idea for both parts. We blow up $\varphi_0^{-1}[0]$ in $B$ and $[0]\sqcup [\infty]$ in $P$. 

Since $B_0:=\varphi_0^{-1}[0]$ is a submanifold with boundary in $(B,\partial B)$ the result of the oriented blow-up is a manifold with corners, having only corner-strata in codimension $1$ and $2$. We denote it by $\hat{B}$.    There exists a projection map
\[ \rho:\hat{B}\ra B
\]
which is a diffeomorphism on $\hat{B}\setminus \eE$ and is the projection of the spherical normal bundle $S(\nu B_0)\ra B_0$. We denoted by $\eE:=\rho^{-1}(B_0)\subset \hat{B}$ the exceptional locus, diffeomorphic to $S(\nu B_0)$. We denote by $\widehat{\partial B}\subset \hat{B}$ the result of blowing up $\partial B$ along $\partial B\cap B_0$. In other words:
\[ \widehat{\partial B}=\rho^{-1}(\partial B).
\]
It is a manifold with boundary that intersects transversely  $\eE$ along its boundary. Its boundary is diffeomorphic to $\partial S(\nu B_0)=S(\nu_{\partial B} (B_0\cap \partial B))$.

The  blow-up of $P$,  denoted  by $\hat{P}$ can be given  a direct description:
\[ \hat{P}=[-1,1]\times S(E),
\]
with projection map:
\[ \Bl:\hat{P}\ra P,\quad (t,v,b)\ra (t,v\sqrt{1-t^2},b), \;\;b\in B, v\in S(E_b).
\]
We let $\widehat{\partial P}:=\Bl^{-1}(\partial P)=[-1,1]\times S(E\bigr|_{\partial B})$.
 
The map $\varphi_0$ being transversal to $[0]$ (and to [$\infty$] which it does not touch) has a lift to a map:
\[ \hat{\varphi_0}:\hat{B}\ra \hat{P}
\]
and the same is true for $\varphi_t$ since flowing preserves transversality. We therefore get a family of commutative diagrams:
\begin{equation}\label{eq4.2} \xymatrix{ \hat{B}\ar[r]^{\hat{\varphi}_t} \ar[d]_{\rho}& \hat{P}\ar[d]^{\Bl}\\
    B\ar[r]^{\varphi_t}& P     }
\end{equation}
The lifts are described as follows:
\begin{equation} \label{eq7}\hat{\varphi}_t:=\left\{\begin{array}{cc}\hat{b}\ra \Bl^{-1}\circ\varphi_t\circ \rho(\hat{b}) & \mbox {for}\; \hat{b}\in \hat{B}\setminus \eE\\  \\
             (b,v)\ra \displaystyle\left(1,\frac{P_E\circ d_b\varphi_t(v)}{|P_E\circ d_b\varphi_{t}(v)|}\right)&\;\;\;\mbox{for}\; b\in B_0,\; v\in S(\nu_b B_0).\end{array}\right.
             \end{equation}
             
         In the last line of (\ref{eq7}) we use the fact that $T_{[0]}P$ is naturally isomorphic to $E\oplus TB$, therefore it makes sense to decompose $d_b\phi_t$ for $b \in B_0$  into its $E_b$ and $T_bB$ components. Hence $P_E$ stands for the projection onto the $E$ component which is normal to the blow-up locus.
         
      Each of the maps $\hat{\varphi}_t$ is an embedding.   Away from $\eE$, $\hat{\varphi}_t$ is essentially the section $\varphi_t$. Moreover, due to the transversality of $\varphi_t$ with $[0]$ we have that $P_E\circ d\varphi_t$ induces an isomorphism between the normal bundles of the blow-up loci. It should be clear that when restricted to the spherical normal bundle this gives again a diffeomorphism onto its image. 
      
  Notice that on $\hat{P}\subset \bR\times S(E)$ we can consider two  flows (vertical with respect to the natural projection $\hat{P}\ra S(E)$): 
\begin{itemize}
\item[I:] the lift of  $\Theta$ from the spherical bundle $S(\bR\oplus E)$: this is a flow with critical manifolds $\{\pm 1\}\times S(E)$; denote it by $\widehat{\Theta}$.
\item[II:] the flow without critical points generated by $-\frac{\partial}{\partial t}$; denote it by $\Psi$, properly speaking this is  a flow on $\bR\times S(E)$ but we will make use of it to flow  sets inside $\hat{P}$.
\end{itemize} 
It is not hard to check that $\hat{\varphi}_0$ is transverse to $\Psi$, meaning that the vector field $-\frac{\partial}{\partial t}$ is not of the type $d\hat{\varphi}_0(w)$. This is clear for points on the complement of $\eE$. For points on $\eE$, one notices that $\Imag\left(\hat{\varphi_0}\bigr|_{\eE}\right)\subset \{+1\}\times S(E)$ and the later set is transversal to the flow $\Psi$.

 Due to this transversality and the fact that $\Psi$ does not have critical points we infer  that when flowing $\hat{\varphi}_0(\hat{B})$ via $\Psi$ down to level $\{-1\}\times S(E)$\footnote{level which lies strictly after $\hat{\varphi}_0(\hat{B})$ in the direction of the flow} one gets  a manifold with corners $Q$ of dimension $n+1$ inside $\hat{P}$. This manifold is diffeomorphic by the Flowout Theorem to the product:
\[  \hat{B}\times I,
\]
where $I$ is an interval.   We argue that:
\begin{equation}\label{Qdef}Q=\overline{\bigcup_{t\in [0,\infty)}\hat{\Theta}_t(\hat\varphi_0(\hat{B}))}=\overline{\bigcup_{t\in[0,\infty)}\hat{\varphi}_t(\hat{B})}.\end{equation}
Notice first that $Q$ is a closed set. Moreover, $\bigcup_{t\in [0,\infty)}\hat{\Theta}_t(\hat\varphi_0(\hat{B}))$ is dense in $Q$. The only points that are in $Q$ and not in the former set are of type $(t,p)\in \hat{P}$ with $p\in \Imag\hat\varphi_0\bigr|_{\eE}$ and $t\in [-1,1]$. Now, each unstable trajectory of a point $p \in \Imag \varphi_0(B_0)\subset [0]$ is limit of trajectories corresponding to non-critical points that lie in $\Imag \varphi_0(B\setminus B_0)$. This follows again from transversality, by writting the flow in local coordinates around $p$ (the vector field is linearizable in suitable coordinates ).

We can conclude from (\ref{Qdef}) the following equality of currents:
\[ \lim_{t\ra \infty} \hat{\varphi}_*([0,t]\times \hat{B})=Q.
\]
This is because $\hat{\varphi}$ restricts a diffeomorphism from the $n+1$-dimensional manifold $(0,\infty)\times \hat{B}\setminus \partial \hat{B}$ to a subset of $Q$ of full $\mathcal{H}^{n+1}$-measure (the dim $n+1$ stratum of $Q$). The limit holds in fact in the mass topology of currents. It follows  that:
\[\lim_{t\ra \infty}d\hat{\varphi}_*([0,t]\times \hat{B})=d Q,
\]
the limit holding in the flat topology of currents. Apply now $\Bl_*$ to both sides to conclude that:
\begin{equation}\label{eq5.02}\lim_{t\ra \infty}d \varphi_*(\id\times \rho)_*([0,t]\times \hat{B})= \Bl_*(d Q).
\end{equation}
Since $\rho$ is a diffeomorphism from an open set of full measure to an open set of full measure we conclude that: $\rho_*\hat{B}=B$. Hence the left hand side of (\ref{eq5.02}) becomes
\[\lim_{t\ra\infty}d \varphi_*([0,t]\times B)=\lim_{t\ra\infty}(\varphi_t)_*B-(\varphi_0)_*B.
\]
The last relation being a consequence of Stokes on $[0,t]\times B$. Therefore:
\begin{equation}\label{lim1} \lim_{t\ra\infty}(\varphi_t)_*B=(\varphi_0)_*B+\Bl_*(d Q).
\end{equation}
Now since $Q$ is a compact manifold with corners we conclude that $d Q$ coincides with its codimension $1$ boundary:
\[ d Q=\hat{\varphi}_{\infty}(\hat{B}\setminus \partial \hat{B})+[-1,1]\times\hat{\varphi}_0(\partial\hat{B})-\hat{\varphi}_0(\hat{B}),
\]
where $\hat{\varphi}_{\infty}:\hat{B}\setminus \partial \hat{B}\ra \{-1\}\times S(E)$ is the pointwise limit $\hat{\varphi}_\infty(b)=\displaystyle\lim_{t\ra \infty}\hat{\varphi}_t(b)$. We conclude that:
\begin{equation}\label{lim2}\Bl_*(d Q)=\varphi_{\infty}(B\setminus B_0)+U(\varphi_0(B_0))-\varphi_0(B),
\end{equation}
where $U(\varphi_0(B_0))\ra \varphi_0(B_0)$ is the fiber bundle of unstable trajectories of the submanifold $\varphi_0(B_0)\subset [0]$. From (\ref{lim1}) and (\ref{lim2}) we conclude that:
\begin{equation}\label{lim3} \lim_{t\ra\infty}(\varphi_t)_*(B)=\varphi_{\infty}(B\setminus B_0)+U(\varphi_0(B_0)).
\end{equation}
In a completely analogous manner one proves that:
\begin{equation}\label{lim4} \lim_{t\ra\infty}(\varphi_t)_*(\partial B)=\varphi_{\infty}(\partial B\setminus \partial B_0)+U(\varphi_0(\partial B_0))
\end{equation}
Notice now that since the flow on $P$ is vertical we get by  a change of variables:
\begin{equation}\label{equ1}\mathscr{L}_{I}(\varphi_t^*\omega,\varphi_t^*\gamma)(\eta)=(\varphi_t)_*(B)(\omega\wedge\pi^*\eta)+(\varphi_t)_*(\partial B)(\gamma\wedge\pi^*\eta)
\end{equation}
We pass to limit in (\ref{equ1}) and using (\ref{lim3}) and (\ref{lim4}) we get:
\[ \lim_{t\ra\infty}\mathscr{L}_{I}(\varphi_t^*\omega,\varphi_t^*\gamma)(\eta)=\int_{\varphi_{\infty}(B\setminus B_0)}\omega\wedge \pi^*\eta+\int_{U(\varphi_0(B_0))}\omega\wedge \pi^*\eta+\]\[+\int_{\varphi_{\infty}(\partial B\setminus \partial B_0)}\gamma\wedge \pi^*\eta+\int_{U(\varphi_0(\partial B_0))}\gamma\wedge \pi^*\eta.
\]
Using the fact that $\varphi_{\infty}:B\setminus B_0\ra [\infty]$, resp. $\varphi_{\infty}\bigr|_{\partial B\setminus \partial B_0}$ are diffeomorphisms onto their images and that $\pi:{U(\varphi_0(B_0))}\ra B_0$ is a fiber bundle and using the adjunction formula we finally get (\ref{flim}). 

Part (b) follows in a similar manner from (\ref{lim3}) and (\ref{lim4}).
\end{proof}

\noindent
\emph{Proof of Theorem \ref{CGB}}

 For part (a) we take $\eta$ to be the family Pfaffian corresponding to the Levi-Civita connection of each fiber of $P\ra B$ constructed as follows. \footnote{This is the same proof we gave in \cite{Ci} for the boundaryless case, modulo computation of the boundary term.} 
 
 Let $d\oplus \pi^*\nabla$ be a connection on $\bR \oplus\pi^*E\ra P$ and let $\bR \oplus\pi^*E=\tau\oplus \tau^\perp$ be the  decomposition into the tautological line bundle and its orthogonal complement. Denote by $\nabla^{\tau^{\perp}}$ the connection on $\tau^{\perp}\ra P$ resulting by orthogonally projecting $\pi^*\nabla\oplus d$ onto $\tau^{\perp}$. Since $\tau^{\perp}\simeq VP$ one can interpret  $\nabla^{\tau^{\perp}}$ as the family Levi-Civita connection of the fiber bundle $P\ra B$.

Let  $\eta:=\Pf(\nabla^{\tau^{\perp}})$. We  compute via Theorem \ref{transfor}:
\[ \lim_{s\ra -\infty}\mathscr{L}_{II}(\varphi_s^*\eta)-\lim_{t\ra\infty}\mathscr{L}_{II}(\varphi_s^*\eta)= (\alpha(B,\eta),\alpha(\partial B,\iota^*\eta)),\]
where\[ \alpha(B,\eta)=B\wedge \widehat{\eta_{[0]}}-\left(\int_{P/B}\eta\right)[\varphi_0^{-1}(0)]-B\wedge\widehat{\eta_{[\infty]}}.
\]
On the other hand, due to Proposition \ref{heq} part (b) we infer that $\mathscr{L}_{II}(\varphi_s^*\eta)$ and $\mathscr{L}_{II}(\varphi_t^*\eta)$ for all $s,t$ represent the same  class. It is not hard to see that this stays true when $s\ra -\infty$ and $t\ra\infty$ since  $\eT^{\mathrm{II}}_t(\eta)$ have a limit as well which is
\[\lim_{\stackrel{t\ra\infty}{s\ra -\infty}}\eT^{\mathrm{II}}_t(\eta)=\left((\omega,\gamma)\ra -\int_{\bR\times B}\omega\wedge \varphi^*\eta,\int_{\bR\times \partial B}\gamma\wedge\varphi^*\eta\right).
\]
The convergence of the integrals is insured by the fact that the flow has a resolution as a compact manifold with corners: this is the closure of $\bigcup_{t\in \bR} \hat{\varphi}_t(B\setminus B_0)$.

Now,  $\int_{P/B}\eta=2$ by the Chern-Gauss-Bonnet theorem for the even dimensional  sphere with the round metric. Moreover $(\tau^{\perp}\bigr|_{[0]},\nabla^{\tau^{\perp}}\bigr|_{[0]})\simeq (E,\nabla)$ as pairs of bundles with connections. We have also $(\tau^{\perp}\bigr|{[\infty]},\nabla^{\tau^{\perp}}\bigr|_{[\infty]})\simeq (E,\nabla)$. One has to be careful here that when identifying $B$ with $[0]$ and $[\infty]$ respectively the orientations are different. In the end:
\[ \lim_{s\ra -\infty}\mathscr{L}_{II}(\varphi_s^*\eta)-\lim_{t\ra\infty}\mathscr{L}_{II}(\varphi_s^*\eta)=2\left(\Pf(\nabla)-s^{-1}(0), \Pf\left(\nabla\bigr|_{E\bigr|_{\partial B}}\right)-\partial s^{-1}(0)\right),
\]
from which the claim follows.

\vspace{0.5cm}

For part (b) one proceeds in a similar manner but with a bit of care. We set $\omega:=\Pf(\nabla^{\tau})$. Now, unfortunately there is no \emph{globally defined} transgression form $\gamma$ on $\partial P$, since  there is no  non-vanishing section on $\tau^{\perp}\ra S(\bR\oplus E)\bigr|_{\partial B}$  since $\tau^{\perp}_b\ra S(\bR\oplus E)_b$ is the tangent bundle of an even-dimensional sphere for every $b\in B$. 

However, one does not need the forms to be defined everywhere in order to apply Theorem \ref{transfor}. In our context we know that $\varphi_0$ is not vanishing along the boundary, hence there exists $U\subset \partial P$ an open neighborhood of $\infty$ which contains  $\Imag \varphi_t$ and $U\cap [0]\bigr|_{\partial P}=\emptyset$. One notices that it is enough for the form $\gamma$ to be defined in this neighborhood $U$, maintaining the pair $(\omega,\gamma)$ closed, in order to conclude that (\ref{flim}) still holds. 

Since we want to be able to take both limits $\displaystyle\lim_{t\ra \pm\infty}$ we will use two forms $\gamma$ depending on the case. As we already mentioned  $\tau^{\perp}$ restricted to $[0]$ or $[\infty]$ can be identified in a canonical manner with $E$. Hence there exist non-vanishing sections denoted $s_{[0]}$ and $s_{[\infty]}$ of the restrictions of $\tau^{\perp}$ to $[0]\bigr|_{\partial B}$ and $[\infty]\bigr|_{\partial B}$ by transferring the original section $s\bigr|_{\partial B}$ to $\tau^{\perp}$. Now extend $s_{[0]}$ and $s_{[\infty]}$ to nonvanishing sections on $\partial P\setminus [\infty]$ and $\partial P\setminus [0]$ respectively. Use $s_{[0]}$ and $s_{[\infty]}$ and $\nabla^{\tau^{\perp}}$ to define two transgression forms $\gamma_1\in \Omega^{2k-1}(\partial P\setminus [\infty])$ and $\gamma_2\in \Omega^{2k-1}(\partial P\setminus [0])$ as before such that $d\gamma_i=\Pf(\nabla^{\tau^{\perp}})$ where that makes sense.

Then we compute:
\[\lim_{s\ra -\infty}\mathscr{L}_I(\varphi_s^*\omega,\varphi_s^*\gamma_1)-\lim_{t\ra\infty}\mathscr{L}_I(\varphi_t^*\omega,\varphi_t^*\gamma_2)
\]
just as before and we are done.
\hfill\(\Box\)

\section{Thom isomorphisms and Chern-Gauss-Bonnet}
In this section, we show how the results we proved so far give us a different perspective on Thom isomophism. In deRham cohomology this celebrated result takes the following form:
\[ H^*_{\cpt}(E)\simeq H^{*-k}(B),
\] 
where $\pi:E\ra B$ is an oriented vector bundle of rank $k$. Both "arrows" are explicit.  The $\longrightarrow$ direction is integration over the fiber of differential forms, while the $\longleftarrow$ is:
\[ \omega\ra \tau \wedge \pi^*\omega,
\]
where $\tau\in \Omega^k_{\cpt}(E)$ is a closed form such that $\int_{E_b}{\tau}=1$ for all $b\in B$.

The purpose for the rest of this note is to discuss some other isomorphisms called by the same name. More precisely we want to make explicit the maps in the following diagram.
\begin{equation}\label{eq6.2} \xymatrix{ H^{i+k}(E, E_0)\ar[r]^{\iota^*}\ar[d]_{\mu} & H^{i+k}(\overline{DE},SE)\ar[r]^{LD ~ I}\ar@/_0.5pc/[d]_{\nu} & H_{n-i}(\mathscr{D}_*(\overline{DE}))\ar@/^0.5pc/[d]^{\pi_*}\\
             H^{i+k}_{\cpt}(E) \ar@/^0.5pc/[r]^{\int} &  H^{i}(B)\ar@/_0.5pc/[u]_{\nu^{-1}}\ar@/^0.5pc/[l]^{\tau \wedge\pi^*( \cdot)}\ar[r]^{PD\qquad} &  H_{n-i}(\mathscr{D}_*(B))\ar@/^0.5pc/[u]^{\iota_*}.}
\end{equation}
We used the standard notation $E_0:=E\setminus \{0\}$. The lower left horizontal isomorphisms are the classical Thom map and its inverse. The top horizontal map is the isomorphism induced by the inclusion of pairs:
\[ (\overline{DE},SE)\subset (E,E_0)
\]
The last vertical isomorphisms are induced by the homotopy equivalence $\pi:\overline{DE}\ra B$. The top right isomorphism is Lefschetz Duality while the bottom right isomorphism is Poincar\'e Duality:
\[ \omega\ra \left\{\eta\ra \int_B \omega\wedge\eta\right\}
\]
 One is left explaining what are the middle vertical arrows and the first vertical arrow.

We will start with the top-down middle vertical arrow. We claim that the map:
\begin{equation}\label{eq6.3} \Omega^*(\overline{DE},SE)\ra \Omega^{*-k}(B),\qquad \nu(\omega,\gamma)=\int_{\overline{DE}/B}\omega+\int_{SE/B}\gamma
\end{equation}
commutes up to a sign with the differential operators and therefore descends to a map in cohomology. To be more precise:
\begin{equation}\label{eq6.1}\nu(d(\omega,\gamma))=(-1)^kd\nu(\omega,\gamma).
\end{equation}
Indeed suppose $\deg{\omega}=i+k$. Then Stokes Theorem gives:
\[ \int_B\left(\int_{\overline{DE}/B}-d\omega\right)\wedge\eta+\int_B\left(\int_{SE/B}d\gamma+\iota^*\omega\right)\wedge \eta=\]\[=-\int_{\overline{DE}}d(\omega\wedge\pi^*\eta)+(-1)^{i+k-1}\int_{\overline{DE}}\omega\wedge \pi^*d\eta+\int_{SE}\iota^*\omega\wedge \iota^*\pi^*\eta+\]\[+\int_{SE}d(\gamma\wedge\iota^*\pi^*\eta)+(-1)^{i+k-1}\int_{SE}\gamma\wedge \iota^*\pi^*d\eta=\]\[=(-1)^{k}  \int_{B}(-1)^{i-1}\left(\int_{\overline{DE}/B}\omega+\int_{SE/B}\gamma\right)\wedge d\eta =(-1)^k\int_Bd\left(\int_{\overline{DE}/B}\omega+\int_{SE/B}\gamma\right )\wedge \eta.
\]
This proves (\ref{eq6.1}) and it is straighforward to check that the right square of (\ref{eq6.2}) commutes. It follows in particular that $\nu$ induces an isomorphism in cohomology.

Before we go on to describe the inverse of $\nu$, which is the more interesting part of this section, let us complete the diagram by saying what is the first vertical arrow. It turns out that this isomorphism appears implicitly in the proof that Nicolaescu gives in \cite{Ni} to the Chern-Gauss-Bonnet for closed manifolds. 

Assume that $E$ is endowed with a Riemannian metric and let $r:E\ra\bR$ be the radius function, i.e. $r(v)=|v|$. Consider $\rho:[0,\infty)\ra [0,\infty)$ to be a smooth function such that 
$\rho$ has compact support and $\rho\bigr|_{[0,1]}\equiv 1$. 

Define
\begin{equation}\label{eq6.4} \mu:\Omega^*(E,E_0)\ra \Omega_{\cpt}^*(E),\qquad \mu(\omega,\gamma)=\rho(r)\omega-\rho'(r)dr\wedge \gamma.
\end{equation}
Notice that 
\[ \mu(d(\omega,\gamma))=-\rho(r)d\omega-\rho'(r)dr\wedge (\omega+d\gamma)=-d(\rho(r)\omega)+d(\rho'(r)dr\wedge \gamma)=-d\mu(\omega,\gamma),
\]
hence we get a well-defined map in cohomology. For convenience $\rho$ will actually denote  $\rho\circ r$ from now on.
\begin{prop} The left square of diagram (\ref{eq6.2}) is commutative. 
\end{prop}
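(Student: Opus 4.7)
The plan is to show that the left square of (\ref{eq6.2}) commutes at the level of forms on $B$, not merely in cohomology. Concretely, for any closed pair $(\omega,\gamma)\in\Omega^{i+k}(E,E_0)$, I will establish the equality of forms
\[ \int_{E/B}\mu(\omega,\gamma)\;=\;\int_{\overline{DE}/B}\omega+\int_{SE/B}\gamma\;=\;\nu\bigl(\iota^*(\omega,\gamma)\bigr) \]
on $B$. Since $\mu$ and $\nu$ are both chain maps (the former by the computation preceding this Proposition, the latter by (\ref{eq6.1})), this pointwise identity on closed representatives will imply commutativity of the square in cohomology.

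The first step is to unpack closedness: from $d(\omega,\gamma)=(-d\omega,\iota^*\omega+d\gamma)=0$ I read off $d\omega=0$ on $E$ and $d\gamma=-\omega|_{E_0}$ on the punctured total space. This lets me rewrite $\mu(\omega,\gamma)$ on the annular region $\{r\geq 1\}$, where $\rho(r)\gamma$ is smooth and fiberwise compactly supported. The Leibniz rule gives
\[ d\bigl(\rho(r)\gamma\bigr)=\rho'(r)\,dr\wedge\gamma+\rho(r)\,d\gamma=\rho'(r)\,dr\wedge\gamma-\rho(r)\omega, \]
so $\mu(\omega,\gamma)=-d\bigl(\rho(r)\gamma\bigr)$ on $\{r\geq 1\}$.

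The second step is to split fiber integration according to $E_b=\overline{DE_b}\cup\bigl(E_b\cap\{r\geq 1\}\bigr)$. On $\overline{DE_b}$, $\rho\equiv 1$ and $\rho'\equiv 0$, so $\mu(\omega,\gamma)$ reduces to $\omega$, producing the contribution $\int_{\overline{DE}/B}\omega$. On $\{r\geq 1\}\cap E_b$, which by compact support of $\rho$ is really $\{1\leq r\leq R_0\}\cap E_b$ for some $R_0$, I will apply fiberwise Stokes to $-d(\rho(r)\gamma)$; the outer boundary at $r=R_0$ contributes zero since $\rho(R_0)=0$, and the inner boundary is $SE_b$.

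The main technical point is the orientation bookkeeping at this inner boundary. As $\partial\bigl(\{r\geq 1\}\cap E_b\bigr)$ with the outer-normal-first convention, $SE_b$ inherits the orientation opposite to its standard one as $\partial\overline{DE_b}$, since the outer normal now points radially inward. That sign cancels the explicit minus in $-d(\rho(r)\gamma)$, yielding exactly $+\int_{SE/B}\rho(1)\gamma=\int_{SE/B}\gamma$ because $\rho(1)=1$. Combining the two fiber contributions gives the desired form-level identity, and hence the commutativity of the left square.
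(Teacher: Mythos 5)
Your reduction of $\mu(\omega,\gamma)$ to $-d(\rho\gamma)$ on $\{r\geq 1\}$ is correct, as is the splitting of the fiber integral into the disk part and the annular part; the flaw is in the step where you ``apply fiberwise Stokes'' and keep only the boundary contribution. Since $\rho\gamma$ is not of top degree along the fibers, integration over the fiber of an exact form is not just a boundary integral: the correct fiberwise Stokes formula has the shape
\[
\int_{\{r\geq 1\}/B} d(\rho\gamma)\;=\;\pm\, d\!\int_{\{r\geq 1\}/B}\rho\gamma\;\pm\int_{SE/B}\rho\gamma ,
\]
and the first term is an exact but in general nonzero form on $B$, so the pointwise identity you assert is false. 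Concretely, take $B=S^1$, $E=S^1\times\bR^2$, $\gamma=h(\theta)\,dx\wedge dy$, $\omega=-h'(\theta)\,d\theta\wedge dx\wedge dy$ (a closed pair). Then $\int_{E/B}\mu(\omega,\gamma)=-2\pi\bigl(\int_0^\infty\rho(r)\,r\,dr\bigr)h'(\theta)\,d\theta$, while $\int_{\overline{DE}/B}\omega+\int_{SE/B}\gamma=-\pi\,h'(\theta)\,d\theta$; the two differ exactly by $d$ of $\int_{\{r\geq1\}/B}\rho\gamma$ and are not equal as forms whenever $h'\not\equiv 0$.

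The gap is repairable in one sentence: the omitted term is exact, so your computation still shows that $\int_{E/B}\mu(\omega,\gamma)$ and $\nu(\iota^*(\omega,\gamma))$ are \emph{cohomologous}, which is all the Proposition needs (both $\mu$ and $\nu$ being chain maps up to sign). Once corrected, your route genuinely differs from the paper's: there, one writes $\omega=\pi^*\beta+d\theta$ using the homotopy equivalence $H^*(E)\simeq H^*(B)$, pairs both candidate forms against an arbitrary closed $\eta\in\Omega^{n-i}(B)$, applies ordinary Stokes on $E$ and on $DE^c$, and concludes equality in $H^i(B)$ from the nondegeneracy of the Poincar\'e pairing. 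Your corrected argument is more direct, avoiding both the decomposition of $\omega$ and Poincar\'e duality, but you must state the fiberwise Stokes formula with its $d\!\int_{\{r\geq1\}/B}$ term and pin down the fiber-integration and orientation conventions, since the boundary sign you argue for (inward normal on $SE$ as boundary of the annulus cancelling the minus in $-d(\rho\gamma)$) depends on exactly those conventions.
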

\begin{proof} Let $(\omega,\gamma)\in \Omega^{k+i}(E,E_0)$ be a closed pair. Then $d\omega=0$ and it follows from the homotopy equivalence $H^*(E)\simeq H^*(B)$ that:
\[ \omega=\pi^*\beta+d\theta,
\]
for some closed $\beta\in \Omega^{k+i}(B)$ and $\theta\in \Omega^{k+i-1}(E)$. Then for every \emph{closed form} $\eta\in \Omega^{n-i}(B)$:
\[\int_B\left( \int_{E/B}\rho \omega-\int_{E/B}d\rho\wedge\gamma\right)\wedge\eta =\int_{E} \rho\pi^*(\beta\wedge \eta)+\int_E\left(\rho d\theta-d\rho\wedge \gamma\right)\wedge\pi^*\eta=
\]
\[=\int_Ed(\rho\theta)\wedge \pi^*\eta-\int_E (d\rho\wedge (\theta+\gamma))\wedge \pi^*\eta=-\int_{DE^c}(d\rho\wedge (\theta+\gamma))\wedge \pi^*\eta,
\]
where $DE^c=\{v~|~|v|\geq 1\}$. We used Stokes for
 \[\int_Ed(\rho\theta)\wedge \pi^*\eta=\int_Ed(\rho\theta\wedge \pi^*\eta).\]
 Use Stokes again:
 \[-\int_{DE^c}(d\rho\wedge (\theta+\gamma))\wedge \pi^*\eta=-\int_{DE^c}d(\rho(\theta+\gamma)\wedge\pi^*\eta)=\int_B\left(\int_{SE/B}\rho(\theta+\gamma)\right)\wedge \eta=
 \]
 \[=\int_{SE}\theta\wedge\pi^*\eta+\int_B\left(\int_{SE/B}\gamma\right)\wedge \eta.
 \]
 Now use the fact that
 \[ \int_{DE}\omega\wedge\pi^*\eta=\int_{DE}d\theta\wedge\pi^*\eta=\int_{DE}d(\theta\wedge \pi^*\eta)=\int_{SE}\theta\wedge\pi^*\eta,
 \]
 to conclude that 
 \[ \int_{E/B}\mu(\omega,\gamma)=\nu(\iota^*(\omega,\gamma)).
 \]
 since both sides are equal when paired with a closed $\eta$. By Poincar\'e Duality the two sides have to be equal in $H^i(B)$.
\end{proof}
Notice that the commutativity of the diagram implies in particular that $\mu$ induces an isomorphism in cohomology.
\begin{remark} The only necessary requirements for $\rho$ in order for the left square of the Thom isomorphism diagram to be commutative is that $\rho$ have compact support, and be equal to $1$ in a neighborhood of $0$. Notice that it is necessary that $\rho$ be constant in a neighborhood of $0$ in order for $\mu$ to be well-defined, i.e. $\rho'(r)=0$ in a neighborhood of $0$. If we choose any other constant than $1$ then we will have to multiply by the same constant the isomorphism $\nu$. Nevertheless,  it is not essential that $\rho=1$ on $[0,1]$, since one can check easily that:
\[\int_{DE(r_1)/B}\omega+\int_{SE(r_1)/B}\gamma=\int_{DE(r_2)/B}\omega+\int_{SE(r_2)/B}\gamma,\qquad \forall \;r_1,r_2\in(0,\infty).
\]
for a closed pair $(\omega,\gamma)$.
\hfill\(\Box\)
\end{remark}
\vspace{0.3cm}
In order to understand what is $\nu^{-1}$ we can start by looking for a pair of forms in $(\overline{DE},SE)$ which is Lefschetz dual to the zero section in $\overline{DE}$. Notice that the zero section in $\overline {DE}$ corresponds via the last vertical arrow to the fundamental class of $B$, which corresponds via Poincar\'e duality to $1$ in $H^0(B)$. The answer to the question is then straightforward when the rank is even. The zero section is the zero locus of the tautological section $s^{\tau}:\overline{DE}\ra \pi^*E$. It is easily checked that $s^{\tau}$ is transversal to the zero section. Now if rank $k$ is even,  Theorem \ref{CGB} says that if $\nabla$ is a connection on $E$ then $(\pi^*\Pf(\nabla),-\TPf(\pi^*\nabla,s^{\tau}))$ is Lefschetz dual to $(s^{\tau})^{-1}(0)$.

In view of this we have:
\begin{prop}\label{evrTis} If rank of $E$ is even then the map:
\[ \nu^{-1}(\eta):= (\pi^*\Pf(\nabla)\wedge\pi^*\eta,-\TPf(\pi^*\nabla,s^{\tau})\wedge\pi^*\eta)
\]
is the inverse of $\nu$.
\end{prop}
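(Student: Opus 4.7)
The plan is to verify that $\nu^{-1}(\eta)$ is a closed element of the mapping cone complex $\Omega^*(\overline{DE},SE)$ and then to check that $\nu(\nu^{-1}(\eta))=\eta$. Since the commutativity of diagram (\ref{eq6.2}) together with the fact that $\nu$ commutes with the differentials (shown just above the proposition) already forces $\nu$ to be an isomorphism in cohomology, exhibiting a right inverse is enough.

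For closedness, I would apply (\ref{deq1}) directly to the pair. The first slot gives $-d(\pi^*\Pf(\nabla)\wedge\pi^*\eta)$ and vanishes because $\Pf(\nabla)$ and $\eta$ are both closed. The second slot is
\[
\iota^*\bigl(\pi^*\Pf(\nabla)\wedge\pi^*\eta\bigr)-d\bigl(\TPf(\pi^*\nabla,s^\tau)\wedge\pi^*\eta\bigr);
\]
using the defining identity $d\TPf(\pi^*\nabla,s^\tau)=\iota^*\pi^*\Pf(\nabla)$ (valid because $s^\tau|_{SE}$ is non-vanishing) together with closedness of $\pi^*\eta$, the two terms cancel exactly.

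For the main identity I would expand $\nu(\nu^{-1}(\eta))$ using (\ref{eq6.3}) and pull the basic factor $\pi^*\eta$ out of the two fibre integrals. The term $\int_{\overline{DE}/B}\pi^*\Pf(\nabla)\wedge\pi^*\eta$ vanishes automatically, since the entire integrand is pulled back from $B$ while the fibre of $\overline{DE}\to B$ is $2k$-dimensional. What remains is $-c\cdot\eta$, where $c:=\int_{SE/B}\TPf(\pi^*\nabla,s^\tau)$ is a closed $0$-form on $B$, hence a constant. To compute $c$, I would evaluate it on a single fibre, which reduces to the model situation of the trivial rank-$2k$ bundle over the closed disk $\overline{D^{2k}}$ with flat connection $d$ and tautological section $s^\tau(v)=v$ (whose restriction to $S^{2k-1}$ is non-vanishing). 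Theorem \ref{CGB}(b) applied to this model, together with $\Pf(d)=0$ and $\#(s^\tau)^{-1}(0)=1$, yields
\[
1=\int_{\overline{D^{2k}}}\Pf(d)-\int_{S^{2k-1}}\TPf(d,s^\tau)=-\int_{S^{2k-1}}\TPf(d,s^\tau),
\]
so $c=-1$ and therefore $\nu(\nu^{-1}(\eta))=\eta$.

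The main obstacle I expect is pure sign bookkeeping: making the conventions in (\ref{deq1}), (\ref{eq6.3}) and the section-based definition of $\TPf$ line up so that no spurious sign appears in either the closedness cancellation or the disk computation. A slicker alternative that avoids the explicit fibre calculation is to apply Theorem \ref{CGB}(b) directly to $\pi^*E\to\overline{DE}$ with the tautological section $s^\tau$, which is transverse to the zero section and non-vanishing along $SE$; this identifies $\mathscr{L}_I(\nu^{-1}(\eta))$ with the push-forward of $\eta$ along the zero section $B\hookrightarrow\overline{DE}$, and the commutativity of (\ref{eq6.2}) then forces $\nu^{-1}$ to be the two-sided inverse of $\nu$.
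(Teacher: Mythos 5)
Your proposal is correct and follows essentially the same route as the paper: compose with $\nu$, observe that $\int_{\overline{DE}/B}\pi^*\Pf(\nabla)\wedge\pi^*\eta=0$ because the integrand is basic, and evaluate $\int_{SE/B}\TPf(\pi^*\nabla,s^{\tau})=-1$, which the paper justifies exactly by your fibrewise reduction, i.e.\ by applying the Chern--Gauss--Bonnet Theorem \ref{CGB}(b) to the trivial bundle over the unit ball with the tautological (radial) section. The extra checks you include (closedness of the pair, constancy of the fibre integral, and the remark that a right inverse suffices since $\nu$ is already an isomorphism) are correct but only make explicit what the paper leaves implicit.
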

\begin{proof} We compose with $\nu$ and notice that
\[ \int_{\overline{DE}/B}\pi^*\Pf(\nabla)\wedge\pi^*\eta=0,
\]
while 
\[ \int_{SE/B}\TPf(\pi^*\nabla,s^{\tau})=-1.
\]
The last relation follows either by a direct computation (see \cite{Ni}, Lemma 8.3.18) or by applying Chern-Gauss-Bonnet Theorem to (the tangent bundle of) the unit ball $D^{2n}\subset \bR^{2n}$. 
\end{proof}
Notice that the pair $(\pi^*\Pf(\nabla),-\TPf(\pi^*\nabla,s))$ makes sense as a closed pair on $(E,E_0)$. By applying $\mu$ and using the commutativity of the diagram we get thus the following explicit representative for a Thom form with compact support. The statement appears also in \cite{Ni} (see Lemma 8.3.18):
\begin{corollary}[Nicolaescu] \label{Niccor}The form $\rho(r)\pi^*\Pf(\nabla)+d(\rho\circ r)\wedge\TPf(\pi^*\nabla,s^{\tau})$ is a Thom form with compact support on the even rank, oriented Riemannian bundle $E$.
\end{corollary}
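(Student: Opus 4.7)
The plan is to realize the proposed form as the image under $\mu$ of a natural lift to $(E,E_0)$ of the closed pair $\nu^{-1}(1)$ identified in Proposition \ref{evrTis}, and then to invoke the commutativity of the left square of diagram (\ref{eq6.2}) to conclude that it represents the Thom class.

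First, I would observe that the tautological section $s^{\tau}$ of $\pi^*E$, originally used in Proposition \ref{evrTis} on the unit sphere bundle $SE$, extends verbatim to a nowhere vanishing section of $\pi^*E$ over \emph{all} of $E_0=E\setminus\{0\}$, simply by the assignment $v\mapsto (v,v)$. The transgression construction that defines $\TPf(\pi^*\nabla, s^{\tau})$ only requires a metric compatible connection and a non-vanishing section, so $\TPf(\pi^*\nabla, s^{\tau})\in\Omega^{k-1}(E_0)$ is well-defined and restricts to the form already denoted the same way on $SE$. Consequently, the pair
\[(\Omega,\Gamma):=(\pi^*\Pf(\nabla),\,-\TPf(\pi^*\nabla,s^{\tau}))\in \Omega^k(E)\oplus\Omega^{k-1}(E_0)=\Omega^k(E,E_0)\]
is defined and closed in $\Omega^*(E,E_0)$: one has $d\Omega=0$ since the Pfaffian is closed, and $\iota^*\Omega+d\Gamma=\pi^*\Pf(\nabla)\bigr|_{E_0}-d\TPf(\pi^*\nabla,s^{\tau})=0$ from the basic transgression identity (\ref{hfor}).

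Next, I would apply $\mu$ to this pair using its definition (\ref{eq6.4}):
\[\mu(\Omega,\Gamma)=\rho(r)\pi^*\Pf(\nabla)-\rho'(r)\,dr\wedge\bigl(-\TPf(\pi^*\nabla,s^{\tau})\bigr)=\rho(r)\pi^*\Pf(\nabla)+d(\rho\circ r)\wedge\TPf(\pi^*\nabla,s^{\tau}),\]
which is precisely the form in the statement. Since the inclusion $\iota:(\overline{DE},SE)\hookrightarrow(E,E_0)$ sends $(\Omega,\Gamma)$ to the pair $(\pi^*\Pf(\nabla),-\TPf(\pi^*\nabla,s^{\tau}))$ on $(\overline{DE},SE)$, Proposition \ref{evrTis} yields $\nu(\iota^*[\Omega,\Gamma])=1\in H^0(B)$.

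Finally, the commutativity of the left square of (\ref{eq6.2}), already established in the paragraph preceding the corollary, gives
\[\mu[\Omega,\Gamma]=\tau\wedge\pi^*\bigl(\nu(\iota^*[\Omega,\Gamma])\bigr)=\tau\wedge\pi^*(1)=\tau\quad\text{in }H^k_{\cpt}(E),\]
so $\mu(\Omega,\Gamma)$ represents the Thom class, finishing the proof. There is no real obstacle here since the heavy lifting was done in Theorem \ref{CGB} and Proposition \ref{evrTis}; the only point deserving attention is the routine verification that $(\Omega,\Gamma)$ is a well-defined closed lift of $\nu^{-1}(1)$ to the pair $(E,E_0)$, which is why I made the first paragraph address it explicitly.
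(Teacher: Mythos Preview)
Your proof is correct and follows exactly the approach the paper sketches in the sentence preceding the corollary: lift the closed pair of Proposition~\ref{evrTis} to $(E,E_0)$, apply $\mu$, and use the commutativity of the left square of~(\ref{eq6.2}). You have simply spelled out the details the paper leaves implicit, including the verification that the pair is closed on $(E,E_0)$ and the explicit computation of $\mu(\Omega,\Gamma)$.
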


\section{The odd rank Thom isomorphism for pairs}

The challenge now is to find the odd dimensional counterpart of Proposition \ref{evrTis}.

We consider $\overline{DE}\subset S(\bR\oplus E)$ via the same inverse of the stereographic projection we used in the proof of Thereom \ref{CGB}.

 On $S(\bR\oplus E)$ we have a bundle of \emph{even rank}, namely $\bR\oplus \pi^*E $.\footnote{ We will use $\pi$ for all fiber bundle projections $S(\bR\oplus E)$, $E$, $SE$, $\overline{DE}$ to $B$. It should be clear from the context which one we mean.} This bundle is endowed with two connections:
\[\nabla^1:=d\oplus \nabla^{\tau^{\perp}} \qquad\mbox{and}\qquad \nabla^2:=d\oplus \pi^*\nabla ,
\]
where for $\nabla^1$ we use the tautological section of $\bR \oplus\pi^*E\ra S(\bR\oplus E)$ for the splitting. Since the Pfaffian of both $\nabla^1$ and $\nabla^2$ are zero as they both have parallel sections, we get by (\ref{hfor}) a closed form $\TPf(\nabla_1,\nabla_2)$ on $S(\bR\oplus E)$.

\vspace{0.3cm}

We claim that along the equator $SE\subset S(\bR\oplus E)$ the form $\TPf(\nabla_1,\nabla_2)$ is exact. We start with the following:
\begin{lemma}[The n-homotopy lemma.] Let $\Delta^m\subset \bR^m$ be the standard $n$-simplex, $B$ a closed oriented manifold and let $H:\Delta^m\times B\ra M$ be a smooth map (an $n$-homotopy). Then for all forms $\omega\in \Omega^k(M)$,  $k\geq m$ the following holds:
\[\int_{\Delta^m}H^*d\omega+(-1)^{m-1}d\int_{\Delta^m}H^*\omega=\int_{\partial \Delta^m}H^*\omega.
\]
\end{lemma}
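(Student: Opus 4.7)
The plan is to view the statement as the Stokes-type formula for integration over the fiber of the projection $\pi:\Delta^m\times B\to B$, applied to the pulled-back form $\alpha:=H^*\omega\in \Omega^k(\Delta^m\times B)$. In these terms the three quantities appearing in the lemma are the fiber integrals $\pi_*(d\alpha)$, $\pi_*\alpha$, and $\pi_*^\partial\alpha$, where $\pi_*^\partial$ denotes integration over the boundary fiber $\partial\Delta^m\times B$, and the desired identity reads
\[ \pi_*(d\alpha)+(-1)^{m-1}d\pi_*\alpha=\pi_*^\partial\alpha \]
as an equality in $\Omega^{k-m+1}(B)$.

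I would prove this identity by testing against an arbitrary $\eta\in\Omega^{n-k+m-1}(B)$, where $n=\dim B$. Since $B$ is closed, $\partial(\Delta^m\times B)=\partial\Delta^m\times B$, and Stokes on the manifold with corners $\Delta^m\times B$ applied to the form $\alpha\wedge\pi^*\eta$ of degree $n+m-1$ yields
\[ \int_{\Delta^m\times B}d\alpha\wedge\pi^*\eta+(-1)^k\int_{\Delta^m\times B}\alpha\wedge\pi^*d\eta=\int_{\partial\Delta^m\times B}\alpha\wedge\pi^*\eta. \]
The projection formula $\pi_*(\beta\wedge\pi^*\eta)=(\pi_*\beta)\wedge\eta$ combined with Fubini rewrites this as
\[ \int_B\pi_*(d\alpha)\wedge\eta+(-1)^k\int_B(\pi_*\alpha)\wedge d\eta=\int_B\pi_*^\partial(\alpha)\wedge\eta. \]
Since $B$ is closed, $\int_Bd\bigl((\pi_*\alpha)\wedge\eta\bigr)=0$, which together with the Leibniz rule $d((\pi_*\alpha)\wedge\eta)=d(\pi_*\alpha)\wedge\eta+(-1)^{k-m}(\pi_*\alpha)\wedge d\eta$ produces $\int_B(\pi_*\alpha)\wedge d\eta=(-1)^{k-m+1}\int_Bd(\pi_*\alpha)\wedge\eta$. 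Combining with the $(-1)^k$ prefactor the total sign becomes $(-1)^{2k-m+1}=(-1)^{m-1}$, so
\[ \int_B\bigl(\pi_*(d\alpha)+(-1)^{m-1}d(\pi_*\alpha)-\pi_*^\partial(\alpha)\bigr)\wedge\eta=0 \]
for every such $\eta$. Non-degeneracy of the wedge pairing between forms of complementary degree on the oriented manifold $B$ then gives the pointwise form identity.

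The only delicate step is the sign bookkeeping, and that is what I expect to be the main obstacle. A useful sanity check is $m=1$: the sign $(-1)^{m-1}$ becomes $+1$ and the identity reduces exactly to the classical homotopy formula \eqref{eq6.17} already recalled in the paper. An alternative route that avoids using closedness of $B$ is a direct coordinate computation on $\Delta^m\times B$: decompose $\alpha=\alpha'+\alpha''$ by $dt$-degree, with $\alpha'$ containing the top factor $dt_1\wedge\cdots\wedge dt_m$ and $\alpha''$ of $dt$-degree at most $m-1$. Only $\alpha'$ contributes to $\pi_*\alpha$, only the $dt$-degree $(m-1)$ piece of $\alpha''$ contributes to $\pi_*^\partial\alpha$, and evaluating $d\alpha'$ and $d\alpha''$ pointwise—separating the $\Delta^m$-tangential and $B$-tangential parts of the exterior derivative—matches the coefficients directly, proving the identity as a form identity on $B$ with no integration required.
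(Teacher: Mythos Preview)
Your main argument is correct and is essentially the same as the paper's proof: both test the identity against an arbitrary $\eta\in\Omega^{n+m-k-1}(B)$, apply Stokes on $\Delta^m\times B$ to $H^*\omega\wedge\pi_2^*\eta$, and then integrate by parts on the closed manifold $B$ to obtain the sign $(-1)^{m-1}$. The paper leaves the final non-degeneracy step implicit, whereas you state it explicitly; your alternative coordinate computation is a legitimate bonus but not the route the paper takes.
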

\begin{proof} Let $\pi_2:\Delta^m\times B\ra B$ be the obvious projection. Stokes Theorem gives for $\eta\in \Omega^{n+m-k-1}(B)$:
\[\int_{\Delta^m\times B}d(H^*\omega\wedge \pi_2^*\eta)=\int_{\partial \Delta^m}H^*\omega\wedge \pi_2^*\eta=\int_{B}\left(\int_{\partial \Delta^m}H^*\omega\right)\wedge\eta.
\]
The left hand side can be written as:
\[\int_{B}\left(\int_{\Delta^m}H^*\omega\right)\wedge\eta+(-1)^k\int_{B}\left(\int_{\Delta^m}H^*\omega\right)\wedge d\eta=\]\[=\int_{B}\left(\int_{\Delta^m}H^*\omega\right)\wedge\eta+(-1)^{m-1}\int_Bd\left(\int_{\Delta^m}H^*\omega\right)\wedge\eta.
\]
\end{proof}

Suppose now $\pi:E\ra B$ is a Riemannian vector bundle and $\nabla^1,\nabla^2,\nabla^3\in\mathscr{A}(E)$ are $3$ \emph{metric compatible} connections. On $\pi_2^*E\ra \Delta^2\times B$ define the connection:
\begin{equation}\label{tina2} \tilde{\nabla}=\frac{d}{ds}+\frac{d}{dt}+\nabla^1+s(\nabla^2-\nabla^1)+t(\nabla^3-\nabla^1).
\end{equation}
Applying the homotopy formula to $\omega:=\Pf(\tilde{\nabla})$ and $H=\id_{\Delta^2\times B}$ we get:
\begin{lemma}\label{SSl} \[-d\int_{\Delta^2}\Pf(\tilde{\nabla})=\TPf(\nabla^1,\nabla^2)+\TPf(\nabla^2,\nabla^3)+\TPf(\nabla^3,\nabla^1).\]
\end{lemma}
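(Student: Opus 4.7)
The plan is to apply the $n$-homotopy lemma with $m=2$, $H=\id_{\Delta^2\times B}$, and $\omega=\Pf(\tilde{\nabla})$. Since the Pfaffian of any connection is a closed form (a standard Chern-Weil fact, which one can check directly from the Bianchi identity applied to the curvature of $\tilde{\nabla}$), the left-hand side of the $n$-homotopy identity reduces to $-d\int_{\Delta^2}\Pf(\tilde{\nabla})$. Thus it suffices to identify
\[
\int_{\partial\Delta^2}\Pf(\tilde{\nabla})=\TPf(\nabla^1,\nabla^2)+\TPf(\nabla^2,\nabla^3)+\TPf(\nabla^3,\nabla^1).
\]

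To do this I would fix the standard realization $\Delta^2=\{(s,t):s,t\geq 0,\ s+t\leq 1\}$ with the induced orientation from $\bR^2$, so that $\partial\Delta^2$ decomposes into three oriented edges traversed counter-clockwise: $e_1=\{t=0,\ s:0\to 1\}$, $e_2=\{s+t=1,\text{ from }(1,0)\text{ to }(0,1)\}$, and $e_3=\{s=0,\ t:1\to 0\}$. On each edge I would restrict the connection $\tilde{\nabla}$ from \eqref{tina2} and compare with the definition \eqref{transdef}. On $e_1$, the restriction is $\frac{d}{ds}+\nabla^1+s(\nabla^2-\nabla^1)$, yielding $\TPf(\nabla^1,\nabla^2)$. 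On $e_2$, parametrizing by $u=t$ with $s=1-u$ gives $\frac{d}{du}+\nabla^2+u(\nabla^3-\nabla^2)$, yielding $\TPf(\nabla^2,\nabla^3)$. On $e_3$ the restriction is $\frac{d}{dt}+\nabla^1+t(\nabla^3-\nabla^1)$, but traversed in the direction $t:1\to 0$, so fiber integration produces $-\TPf(\nabla^1,\nabla^3)=\TPf(\nabla^3,\nabla^1)$.

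Summing these three contributions yields the required identity for $\int_{\partial\Delta^2}\Pf(\tilde{\nabla})$, and combining with the $n$-homotopy formula proves the lemma. The only point requiring real care is the sign/orientation accounting on $e_3$ together with the $(-1)^{m-1}=-1$ factor in the $n$-homotopy lemma; everything else is immediate from the definition of the transgression form as a fiber integral over the $1$-simplex. No delicate analytic estimates are needed.
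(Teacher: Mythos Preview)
Your proposal is correct and follows essentially the same approach as the paper: apply the $n$-homotopy lemma with $m=2$, $H=\id_{\Delta^2\times B}$, and $\omega=\Pf(\tilde{\nabla})$, then identify the restriction of $\tilde{\nabla}$ to each oriented edge of $\partial\Delta^2$ with the corresponding one-parameter transgression connection. The paper's proof is a one-line remark to this effect; you have simply spelled out the edge-by-edge parametrizations and the orientation/sign bookkeeping (including $-\TPf(\nabla^1,\nabla^3)=\TPf(\nabla^3,\nabla^1)$) that the paper leaves implicit.
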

\begin{proof} When pulling back to the edges of the $2$-simplex the bundle $\pi_2^*E$ and $\tilde{\nabla}$ one gets the corresponding connections as in (\ref{tina}).
\end{proof}
Although we will not need it we include the following
\begin{lemma}
\begin{equation}\label{eqn6} F(\tilde{\nabla})=ds\wedge(\nabla^2-\nabla^1)+dt\wedge(\nabla^3-\nabla^1)+\pi_2^*F(\nabla^{s,t}),
\end{equation}
where
\[ \nabla^{s,t}=\nabla^1+s(\nabla^2-\nabla^1)+t(\nabla^3-\nabla^1).
\]
\end{lemma}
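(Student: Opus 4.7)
The plan is to compute $F(\tilde\nabla) = \tilde\nabla \circ \tilde\nabla$ by brute force on a well-chosen test section and then invoke tensoriality to promote the identity to all of $\pi_2^*E$. Set
\[
\nabla^{s,t} := \nabla^1 + s(\nabla^2-\nabla^1) + t(\nabla^3 - \nabla^1),
\]
so that $\tilde\nabla = \frac{d}{ds} + \frac{d}{dt} + \nabla^{s,t}$ on $\pi_2^*E \to \Delta^2\times B$. Here $\frac{d}{ds} + \frac{d}{dt}$ is the partial exterior derivative in the $\Delta^2$-direction, extended to $E$-valued forms by the conventions of (\ref{tina}); in particular it squares to zero and satisfies the usual graded Leibniz rule.

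Let $\sigma$ be a section of $\pi_2^*E$ that is the pull-back of a section of $E \to B$, so that $\partial_s\sigma = \partial_t\sigma = 0$. Then the first application of $\tilde\nabla$ just gives $\tilde\nabla\sigma = \nabla^{s,t}\sigma$, since $\frac{d}{ds}\sigma = \frac{d}{dt}\sigma = 0$. Applying $\tilde\nabla$ a second time and using the graded Leibniz rule extension of $\nabla^{s,t}$ to $E$-valued forms,
\begin{align*}
\tilde\nabla^2\sigma
&= \left(\tfrac{d}{ds}+\tfrac{d}{dt}\right)\!\bigl(\nabla^{s,t}\sigma\bigr) + \nabla^{s,t}\bigl(\nabla^{s,t}\sigma\bigr)\\
&= ds\wedge\partial_s(\nabla^{s,t}\sigma) + dt\wedge\partial_t(\nabla^{s,t}\sigma) + F(\nabla^{s,t})\sigma.
\end{align*}
Because $\nabla^{s,t}$ is affine in $(s,t)$ with $\partial_s\nabla^{s,t} = \nabla^2-\nabla^1$ and $\partial_t\nabla^{s,t} = \nabla^3-\nabla^1$, and $\sigma$ carries no $(s,t)$-dependence, the first two terms become $ds\wedge (\nabla^2-\nabla^1)\sigma$ and $dt\wedge(\nabla^3-\nabla^1)\sigma$ respectively. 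The last term, read off pointwise in $(s,t)$, is the pullback $\pi_2^* F(\nabla^{s,t})\sigma$ of the family of curvatures on $B$. This gives (\ref{eqn6}) on such $\sigma$; a general local section has the form $f\cdot \sigma$ with $f \in C^\infty(\Delta^2\times B)$, and the curvature is $C^\infty$-linear, so the identity extends verbatim.

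There is essentially no obstacle here: the computation is direct. The only point requiring slight care is bookkeeping the graded Leibniz signs when $\tilde\nabla$ acts on forms that already carry a $ds$ or $dt$ factor; the choice of a section $\sigma$ with vanishing $\partial_s,\partial_t$ avoids those terms entirely. Note also that no $ds\wedge dt$ contribution appears, which is consistent with the fact that the map $(s,t)\mapsto \nabla^{s,t}$ is affine, so that $\partial_s\partial_t\nabla^{s,t}=0$.
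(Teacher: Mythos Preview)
Your proof is correct and follows essentially the same approach as the paper: both compute $F(\tilde\nabla)=\tilde\nabla^2$ directly by expanding the square. The only cosmetic difference is that the paper organizes the computation via the operator splitting $\tilde\nabla=L+\Omega$ with $L=\frac{d}{ds}+\frac{d}{dt}+\nabla^1$ and $\Omega=s(\nabla^2-\nabla^1)+t(\nabla^3-\nabla^1)$, expanding $F=L^2+[L,\Omega]+\Omega\wedge\Omega$, whereas you split as $(\frac{d}{ds}+\frac{d}{dt})+\nabla^{s,t}$ and evaluate on a pulled-back test section; the underlying arithmetic is identical.
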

\begin{proof}
To see that (\ref{eqn6}) holds write $\tilde{\nabla}=L+\Omega$ where $L=\frac{d}{ds}+\frac{d}{dt}+\nabla^1$ and
\[\qquad\qquad \Omega=s\Omega^{21}+t\Omega^{31},\qquad\quad \Omega^{i1}:=\nabla^i-\nabla^1,\;i=2,3.
\]
Then
\[F(\tilde{\nabla})=L^2+[L,\Omega]+\Omega\wedge\Omega.
\]
Now it is not hard to see that $L=\pi_2^*\nabla^1$ and therefore $L^2=\pi_2^*F(\nabla^1)$. On the other hand:
\[ [L,\Omega]=\left[\frac{d}{ds},\Omega\right]+\left[\frac{d}{dt},\Omega\right]+[\nabla^1,\Omega]=ds\wedge\Omega^{21}+dt\wedge\Omega^{31}+[\nabla^1,\Omega].
\]One checks easily that
\[ \pi_2^*F(\nabla^1)+[\nabla^1,\Omega]+\Omega\wedge\Omega=\pi_2^*F(\nabla^{s,t}),
\]
from which (\ref{eqn6}) follows.
\end{proof}

Lemma \ref{SSl} reminds one of a result by Simons and Sullivan. Namely if $\phi:S^1\ra  \mathscr{A}(E)$ is a smooth family of connections  then  there exists a closed form:
\[ \TPf(\phi):=\int_{[0,1]}\Pf(\tilde{\nabla}^{\phi}),
\]
where now $\tilde{\nabla}^{\phi}:=\frac{d}{dt}+\phi(t)$. Then  Proposition 1.6  from \cite{SS} says the following
\begin{prop}[Simons-Sullivan]\label{extrans} The form $\TPf(\phi)$ is exact. 
\end{prop}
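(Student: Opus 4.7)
The plan is to mimic the proof of Lemma \ref{SSl} but with the $2$-simplex replaced by the $2$-disk $D^2$. The circle $S^1=\partial D^2$ bounds $D^2$, so the family $\phi\colon S^1\to\mathscr{A}(E)$ should fiber-integrate to an exact form provided we can extend it to a family over $D^2$. Such an extension is possible because $\mathscr{A}(E)$ is an affine space (over skew-adjoint $\End(E)$-valued $1$-forms, if we insist on metric compatibility), hence contractible: pick any $\nabla_0\in\mathscr{A}(E)$ and define $\Phi\colon D^2\to\mathscr{A}(E)$ by $\Phi(\xi)=\nabla_0+\rho(|\xi|)(\phi(\xi/|\xi|)-\nabla_0)$ with a cutoff $\rho$, or just radially interpolate; any smooth extension with $\Phi|_{S^1}=\phi$ will do.

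Next I form the connection $\widetilde{\nabla}^{\Phi}:=d_{D^2}+\Phi$ on the pulled-back bundle $p_2^{*}E\to D^2\times B$, where $p_2$ is the projection to $B$. Since $\widetilde{\nabla}^{\Phi}$ is metric compatible (the partial connection $d_{D^2}$ preserves the pulled-back metric), Chern--Weil guarantees that $\Pf(\widetilde{\nabla}^{\Phi})\in\Omega^{2k}(D^2\times B)$ is a closed form. Moreover, the restriction of $\widetilde{\nabla}^{\Phi}$ to $\partial D^2\times B=S^1\times B$ is exactly $\widetilde{\nabla}^{\phi}=\tfrac{d}{dt}+\phi(t)$, so
\[
\int_{S^1}\Pf(\widetilde{\nabla}^{\Phi})\big|_{S^1\times B}=\TPf(\phi).
\]

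The key step is the analogue of the $n$-homotopy lemma for the $2$-disk: for any $\omega\in\Omega^{*}(D^2\times B)$,
\[
\int_{D^2}d\omega-d\int_{D^2}\omega=\int_{S^1}\omega,
\]
with $S^1=\partial D^2$ carrying the induced orientation. This is proved verbatim as the $\Delta^2$ version already presented in the paper: start from Stokes' theorem on $D^2\times B$ applied to $\omega\wedge p_2^{*}\eta$ for a test form $\eta$ on $B$, split the $d$ into its $D^2$ and $B$ components, and push the $B$-differential across via integration by parts. Applying this identity to $\omega=\Pf(\widetilde{\nabla}^{\Phi})$ and using that this form is closed, we obtain
\[
\TPf(\phi)=\int_{S^1}\Pf(\widetilde{\nabla}^{\Phi})=-\,d\!\int_{D^2}\Pf(\widetilde{\nabla}^{\Phi}),
\]
which exhibits $\TPf(\phi)$ as exact.

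The main (and only real) obstacle is verifying this $D^2$-fiber-integration-with-boundary formula rigorously; but it is a routine adaptation of the $\Delta^2$ argument already laid out, and no new analytic input is needed. Everything else is a direct consequence of the affine structure of $\mathscr{A}(E)$ and the closedness of Chern--Weil forms.
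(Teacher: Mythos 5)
Your proposal is correct and follows essentially the same route as the paper: extend $\phi$ over $D^2$ using the affine structure of $\mathscr{A}(E)$, form $\tilde{\nabla}^{\tilde{\phi}}=d_{D^2}+\tilde{\phi}$ on the pulled-back bundle over $D^2\times B$, and apply the fiber-integration Stokes identity to the closed Pfaffian to get $\TPf(\phi)=-d\int_{D^2}\Pf(\tilde{\nabla}^{\tilde{\phi}})$. Your extra care about the cutoff near the origin and metric compatibility of the extension is a harmless elaboration of the paper's one-line appeal to the affine structure.
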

\begin{proof}  Since $\mathscr{A}(E)$ is an affine space we can assume that there exists an extension of $\phi$ to a smooth map $\tilde{\phi}:D^2\ra\mathscr{A}(E)$.
On $D^2\times B$ consider the bundle $\pi_2^*E$ and the connection:
\[ \tilde{\nabla}^{\tilde{\phi}}:=d_{D^2}+\tilde{\phi}(z).
\]
Then
\[ \int_{S^1}\Pf(\tilde{\nabla}^{\tilde{\phi}})=-d\int_{D^2}\Pf(\tilde{\nabla}^{\tilde{\phi}})
\]
The left hand side is just $\TPf(\phi)$.
\end{proof}

\vspace{0.5cm}

We go back to our story. Consider the bundle $\bR \oplus\pi^*E$ over $SE$. 
It has a natural splitting:
\[ \bR \oplus\pi^*E= (\tau\oplus \bR)\oplus \tau^{\perp},
\]
where $\tau$ is determined by the tautological section of $\pi^*E\ra SE$. Consider the connection relative to this decomposition:
\begin{equation}\label{defnab3} \nabla^3:=d\oplus \nabla^{\tau^{\perp}}.
\end{equation}

We stress out that the trivial connection $d$ is defined on the trivial bundle $\tau\oplus \bR$ by using $s^{\tau}$ and $(0,1)$ for trivialization which is the same thing as declaring the sections parallel.

It is clear that $\Pf(\nabla^3)=0$. But, more interesting,  we also have:
\begin{lemma} \label{perspars}
\[ \TPf(\nabla^1,\nabla^3)=0= \TPf(\nabla^2,\nabla^3).
\]
\end{lemma}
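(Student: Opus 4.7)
The plan for $\TPf(\nabla^{1},\nabla^{3})=0$ is to verify that the two connections actually \emph{agree} on $SE$, which makes the transgression tautologically zero. At a point $(0,v)\in SE\subset S(\bR\oplus E)$ the tautological line of $\bR\oplus\pi^{*}E\to S(\bR\oplus E)$ is spanned by $(0,v)$, so its restriction to $SE$ coincides with $\{0\}\oplus\tau\subset\bR\oplus\pi^{*}E$ and its orthogonal complement is $\bR\oplus\tau^{\perp}$. Since the $\bR$-factor is orthogonal to $\pi^{*}E$, orthogonally projecting $d\oplus\pi^{*}\nabla$ onto $\bR\oplus\tau^{\perp}$ produces exactly $d\oplus\nabla^{\tau^{\perp}}$. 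Passing to the common refinement $\bR\oplus\pi^{*}E|_{SE}=\tau\oplus\bR\oplus\tau^{\perp}$, one reads off
\[\nabla^{1}|_{SE}=(\text{trivial, with }s^{\tau}\text{ parallel})\oplus d\oplus\nabla^{\tau^{\perp}},\]
and this coincides with $\nabla^{3}=d_{\tau\oplus\bR}\oplus\nabla^{\tau^{\perp}}$ because $d_{\tau\oplus\bR}$ is by definition the trivial connection making $s^{\tau}$ and $(0,1)$ parallel. Hence $\nabla^{1}|_{SE}=\nabla^{3}$, and the transgression between them is zero.

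For the second identity the key observation is that the family $\nabla_{t}=(1-t)\nabla^{2}+t\nabla^{3}$ on $\bR\oplus\pi^{*}E|_{SE}$ admits a common parallel non-vanishing section, namely $(1,0)\in\bR\oplus\pi^{*}E$. Indeed $\nabla^{2}=d\oplus\pi^{*}\nabla$ annihilates $(1,0)$ at once, while under $\nabla^{3}$ the vector $(1,0)$ lies in the $(\tau\oplus\bR)$-summand and is exactly the $(0,1)$ member of the parallel frame defining $d_{\tau\oplus\bR}$; linearity then gives $\nabla_{t}(1,0)=0$ for every $t$. Pulling $(1,0)$ back to a $t$-constant section of $\pi_{2}^{*}(\bR\oplus\pi^{*}E|_{SE})$ therefore produces a parallel non-vanishing section for the family connection $\tilde{\nabla}=d/dt+\nabla_{t}$.

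I would then finish by invoking the elementary principle that a metric compatible connection on an oriented Riemannian vector bundle of \emph{even} rank admitting a parallel non-vanishing section $e$ has vanishing Pfaffian. The short justification is that metric compatibility together with $\nabla e=0$ forces $\nabla$ to preserve both $\langle e\rangle$ and $\langle e\rangle^{\perp}$, so in a local oriented orthonormal frame with $e$ as first vector the skew-symmetric curvature matrix has zero first row and column, and every monomial in the Pfaffian polynomial necessarily picks up a zero factor from that row. Applied to $\tilde{\nabla}$ on the rank-$2k$ bundle $\pi_{2}^{*}(\bR\oplus\pi^{*}E|_{SE})$ this yields $\Pf(\tilde{\nabla})\equiv 0$ on $[0,1]\times SE$, whence $\TPf(\nabla^{2},\nabla^{3})=\int_{[0,1]}\Pf(\tilde{\nabla})=0$.

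The only mildly delicate step is the identification $\nabla^{1}|_{SE}=\nabla^{3}$, since the two connections are packaged via different orthogonal splittings of $\bR\oplus\pi^{*}E|_{SE}$; passing to the common refinement $\tau\oplus\bR\oplus\tau^{\perp}$ dissolves the apparent discrepancy. The rest is a routine manipulation with the Pfaffian of a block matrix having a trivial line summand.
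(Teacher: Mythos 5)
Your proof is correct. For $\TPf(\nabla^{2},\nabla^{3})=0$ you argue exactly as the paper does: the unit section of the trivial $\bR$-factor (your $(1,0)$, the paper's $(0,1)$) is parallel for every connection $(1-t)\nabla^{2}+t\nabla^{3}$, hence the $t$-constant section is parallel for the auxiliary connection $\tilde{\nabla}$ on $[0,1]\times SE$, and a metric connection on an even rank bundle admitting a nonvanishing parallel section has $\Pf=0$, so the fiber integral defining the transgression vanishes. For $\TPf(\nabla^{1},\nabla^{3})=0$ you take a genuinely different (and stronger) route: the paper only exhibits the common parallel section $s^{\tau}$ of $\nabla^{1}$ and $\nabla^{3}$ and applies the same Pfaffian-vanishing principle, whereas you prove the equality $\nabla^{1}\bigr|_{SE}=\nabla^{3}$, which makes the path of connections constant and the transgression tautologically zero. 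Your stronger claim is indeed true: since orthogonal projection is pointwise, restricting the projected connection $\nabla^{\tau^{\perp}}$ to the equator is the same as projecting $(d\oplus\pi^{*}\nabla)\bigr|_{SE}$ onto $\tau^{\perp}\bigr|_{SE}=\bR\oplus\tau_{E}^{\perp}$ (writing $\tau_{E}$ for the tautological line of $\pi^{*}E\to SE$), and because $d\oplus\pi^{*}\nabla$ is already block diagonal with respect to $\bR\oplus\pi^{*}E$, this projection is $d\oplus P_{\tau_{E}^{\perp}}\pi^{*}\nabla$; in particular the unit $\bR$-section is $\nabla^{1}$-parallel along $SE$, which is the one point your one-line justification leaves implicit but is immediate from this computation. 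What your route buys is a cleaner, ``no residue to compute'' reason for the first vanishing together with a small extra geometric fact about the equator; the paper's route has the advantage of disposing of both identities uniformly with the single persistent-parallel-section principle. Either argument is fully adequate for the lemma.
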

\begin{proof} The reason for the vanishing of the two transgression classes is the existence of a persistent parallel section both for the family:
\begin{equation}\label{fafff} (1-t)\nabla^1+t\nabla^3
\end{equation}
as for the family
\begin{equation}\label{secafff} (1-t)\nabla^2+t\nabla^3.
\end{equation}
In the  case  of (\ref{secafff}), this is $s\equiv (0,1)$ while in the case of (\ref{fafff}) this is the tautological section $s^{\tau}$ of $\pi^*E\ra SE$. To see that this fact implies the vanishing of the transgression class take a look at the definition (\ref{transdef}). The section $(t,b)\ra s(b)$ is a parallel section of $p_2^*\pi^*E\ra [0,1]\times SE$ for the connection $\tilde{\nabla}$ and hence $\Pf(\tilde\nabla)=0$. 
\end{proof}

 By putting together  Lemma \ref{SSl} and Lemma \ref {perspars} we get:
\[ \TPf(\nabla^1,\nabla^2)\bigr|_{SE}=-d\int_{\Delta^2}\Pf(\tilde{\nabla}),
\] 
where $\tilde{\nabla}$ was defined at (\ref{tina2}). To keep the notation simple, let
 \begin{equation}\label{defsectra}\TPf(\nabla^1,\nabla^2,\nabla^3):=\int_{\Delta^2}\Pf(\tilde{\nabla})\end{equation} and call it a secondary transgression class.
We now prove:
\begin{theorem} Let $\rank{E}=k$ be odd.  The  pair $(-\TPf(\nabla^1,\nabla^2),-\TPf(\nabla^1,\nabla^2,\nabla^3))\in \Omega^{k}(\overline{DE},SE)$ is closed and is  Lefschetz dual to the zero section $B\hookrightarrow \overline{DE}$.
\end{theorem}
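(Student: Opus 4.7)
The pair is closed by a direct computation. Using $d(\omega,\gamma) = (-d\omega,\iota^*\omega+d\gamma)$, the first slot gives $d\TPf(\nabla^1,\nabla^2) = \Pf(\nabla^2)-\Pf(\nabla^1) = 0$, since both connections admit a nonvanishing parallel unit section — the fixed $(1,0)$ for $\nabla^2$ and the tautological $s^\tau$ for $\nabla^1$. The second slot gives $-\iota^*\TPf(\nabla^1,\nabla^2) - d\TPf(\nabla^1,\nabla^2,\nabla^3)$, which vanishes by the identity $\iota^*\TPf(\nabla^1,\nabla^2) = -d\TPf(\nabla^1,\nabla^2,\nabla^3)$ already established via Lemma~\ref{SSl} combined with Lemma~\ref{perspars}.

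For the Lefschetz duality claim I would invoke the commutativity of the right-hand square of diagram~(\ref{eq6.2}): the zero section of $\overline{DE}$ corresponds under $\pi_*$ to the fundamental class of $B$, and this class corresponds under Poincar\'e duality to $1 \in H^0(B)$. It therefore suffices to verify
\[
\nu\bigl(-\TPf(\nabla^1,\nabla^2),\,-\TPf(\nabla^1,\nabla^2,\nabla^3)\bigr) \;=\; -\!\int_{\overline{DE}/B}\!\!\TPf(\nabla^1,\nabla^2)\;-\!\int_{SE/B}\!\!\TPf(\nabla^1,\nabla^2,\nabla^3) \;=\; 1
\]
in $H^0(B)$. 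Both terms are fiber integrals of closed forms, hence locally constant on $B$; moreover, the total quantity is a topological invariant independent of the auxiliary connection $\nabla$ on $E$, so one may reduce to a single fiber over a point, trivialize $E$, and take $\nabla = d$. The identity then becomes a universal statement on $S^{2k-1}$, with the stereographic image of $D^{2k-1}$ as the upper hemisphere and equator $S^{2k-2}$.

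My preferred strategy for this fiber identity parallels the flow-based proof of Theorem~\ref{CGB}: apply Theorem~\ref{transfor} to the vertical gradient flow on $S(\bR\oplus E)$ of the height function, exploiting the key observation that both $\nabla^1$ and $\nabla^2$ coincide along the critical loci $[0]$ and $[\infty]$ (where the tautological and fixed $\bR$-factors span the same line), so that $\TPf(\nabla^1,\nabla^2)$ vanishes there and the limiting currents from the flow localize cleanly on the zero section. The hemisphere decomposition $S(\bR\oplus E) = \overline{DE}\cup_{SE}\overline{DE}'$, combined with the Stokes-type identity $\TPf(\nabla^1,\nabla^2)|_{SE} = -d\TPf(\nabla^1,\nabla^2,\nabla^3)$, lets the equatorial secondary transgression supply exactly the boundary correction needed to upgrade the hemisphere integral to the correct topological invariant. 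The main obstacle I anticipate is the careful bookkeeping of signs and of the universal constant — one must check that the hemisphere contribution and the equatorial correction sum to $1$ rather than some other rational multiple, which ultimately reduces to an explicit evaluation on $S^{2k-1}$ of the transgression between the tautological and the constant framings of $\bR^{2k}$, together with the compatible matching of the secondary transgression along $SE$.
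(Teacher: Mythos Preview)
Your verification of closedness is fine and matches the paper.

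Your reduction to the fiber identity $\nu(\text{pair})=1$ via the right square of diagram~(\ref{eq6.2}) is valid and is conceptually cleaner than what the paper does. However, you do not actually establish the identity; you explicitly flag the constant as ``the main obstacle'' and leave it unresolved. The sketch you offer for computing it does not work as written: Theorem~\ref{transfor} takes as input a \emph{section} $\varphi_0:B\to P$ of a sphere bundle $P\to B$ and a closed pair on $(P,\partial P)$, and outputs a limiting current on $B$. Your pair lives on $(\overline{DE},SE)$, not on $B$, so flowing a section of $S(\bR\oplus E)\to B$ cannot detect it. To invoke the flow machinery you must first realize the pair as the pullback of something via a section of a sphere bundle whose base is $\overline{DE}$.

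This is exactly what the paper does, and it is the step your proposal is missing. The paper passes one level up to $P:=S(\bR\oplus\pi^*E)\to\overline{DE}$, constructs lifted connections $\hat\nabla^1,\hat\nabla^2,\hat\nabla^3$ on $\bR\oplus\rho^*\pi^*E\to P$ so that the tautological section $s^\tau:\overline{DE}\to P$ pulls the lifted transgressions back to the given pair, and then applies Theorem~\ref{transfor} to the flow of $s^\tau$. The residue over the fiber sphere is handled by the same observation as in Proposition~\ref{evrTis} (namely $\int_{S(\bR\oplus E_b)}\TPf(\hat\nabla^1,\hat\nabla^2)=-1$), which is the ``universal constant'' you were worried about. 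Two genuine technical issues arise that your sketch does not anticipate: $\hat\nabla^3$ is only defined away from the pole sections $[0]\cup[\infty]$, forcing a blow-up of $P$ to make sense of the limit $\lim_{t\to\infty}(s^\tau_t)^*\gamma$; and one must show the extra residue $\int_{P/\overline{DE}}\TPf(\hat\nabla^3,\hat\nabla^1)$ vanishes, which the paper handles by an antipodal symmetry argument (Proposition~\ref{trandsym}). Your approach would, if completed, still have to confront analogues of both of these---the secondary transgression on $SE$ involves $\nabla^3$, whose construction is tied to the nontrivial geometry of the tautological section, and a direct evaluation of $\int_{SE/B}\TPf(\nabla^1,\nabla^2,\nabla^3)$ is not simpler than what the paper does.
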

\begin{proof} The pair is closed due to the definition of $\TPf(\nabla^1,\nabla^2,\nabla^3)$.
We try to put ourselves in the conditions of Theorem \ref{transfor}. Let
\[  P:=S(\bR \oplus\pi^*E)\stackrel{\rho}{\longrightarrow} \overline{DE}\subset S(\bR\oplus E).
\]
be the fiberbundle projection. Over $P$ we have the bundle $\bR \oplus\rho^*\pi^*E$, the connections $\hat\nabla^2:=d\oplus\rho^*\pi^*\nabla$ and its cousin  $\hat{\nabla}^1:=\hat\nabla^{\tau}\oplus\hat{\nabla}^{\tau^{\perp}}$ according to the decomposition of $\bR \oplus\rho^*\pi^*E$ into its tautological line bundle and its orthogonal complement. The transgression form $\TPf(\hat{\nabla}^1,\hat{\nabla}^2)$ is closed. Moreover, the tautological section $s^{\tau}:\overline{DE}\ra P$ satisfies:
\[ (s^{\tau})^*\TPf(\hat{\nabla}^1,\hat{\nabla}^2)=\TPf({\nabla}^1,{\nabla}^2).
\]
This happens  $s^{\tau}$ pulls back $\bR \oplus\rho^*\pi^*E$ together with its tautological decomposition to $\bR \oplus\pi^*E$ and  its corresponding decomposition and $\TPf$ behaves as expected with respect to pull-back.

Now comes the more interesting part of defining $\hat{\nabla}^3$. Recall that we need a metric compatible connection. We will define $\hat{\nabla}^3$ on
  \[ F:=\bR \oplus\rho^*\pi^*E\bigr|_{S(\bR \oplus\pi^*E)\setminus ([0]\cup [\infty])}\]
   as follows: consider the plane bundle $\mathscr{P}\subset F$ generated by $\langle (1,0); s^{\tau}\rangle$. It is clear that $\mathscr{P}$ is trivializable. We fix such a trivialization by using as a basis the unit vectors $(1,0)$ and 
\begin{equation}\label{eq7.1} \frac{s^{\tau}-\langle s^{\tau},(1,0)\rangle (1,0)}{|s^{\tau}-\langle s^{\tau},(1,0)\rangle (1,0)|}.
\end{equation}
Via this trivialization we get a trivial connection on $\mathscr{P}$ denoted as always by $d$. Define:
\[\hat{\nabla}^3:=d\oplus \hat\nabla^{\perp}
\]
relative to the decomposition $F=\mathscr{P}\oplus \mathscr{P}^{\perp}$ where $\hat\nabla^{\perp}$ is the projection of $d\oplus \rho^*\pi^*\nabla$ onto $\mathscr{P}^{\perp}$. We will deal later with the unpleasant fact that $\hat{\nabla}^3$ is only defined away from the north and south pole section in $P$.

Notice that by the naturality of the constructions:
\[ \left(s^{\tau}\bigr|_{SE}\right)^*\TPf(\hat\nabla^1,\hat\nabla^2,\hat\nabla^3)=\TPf(\nabla^1,\nabla^2,\nabla^3).
\]
This makes sense since the image of $s^{\tau}\bigr|_{SE}$ is contained in the "equator"  of $S(\bR \oplus\pi^*E)$ where $(0,1)$ and the tautological section are actually orthogonal.

Notice also that  by Lemma \ref{SSl}:
\[-d\TPf(\hat\nabla^1,\hat\nabla^2,\hat\nabla^3)=\TPf(\hat\nabla^1,\hat{\nabla}^2)+\TPf(\hat{\nabla}^3,\hat{\nabla}^1),
\]
This suggests we  flow the closed pair:
 \[(\omega,\gamma):=(-\TPf(\hat\nabla^1,\hat{\nabla}^2)-\TPf(\hat{\nabla}^3,\hat{\nabla}^1),-\TPf(\hat\nabla^1,\hat\nabla^2,\hat\nabla^3)).\]

 Thinking of $\overline{DE}$ as a submanifold of $S(\bR\oplus E)$, let $s^{\tau}_t:\overline{DE}\ra P$ be $s^{\tau}$ flown by the fiberwise height flow to moment $t$.

We claim that
\[\lim_{t\ra \infty}\mathscr{L}_I((s^{\tau}_t)^*\omega,(s^{\tau}_t)^*\gamma)=[B].
\]
 where  $[B]$ is the zero section in $\overline{DE}$.
 
By Theorem \ref{transfor} we need to look at $\omega\bigr|_{[\infty]}$. But
 \begin{equation}\label{eq7.2} \omega\bigr|_{[\infty]}=0.
 \end{equation}
  This is because along the infinity (south pole) section as well as along the zero  (north pole) section the tautological decomposition of $\bR \oplus \rho^*\pi^*E$ coincides with the natural decomposition into $\rho^*\pi^*E$ and $\bR$, hence $\hat{\nabla}^1$ and $\hat{\nabla}^2$ coincide when everything is pulled back to these closed submanifolds of $P$. Similarly along the infinity section the connections $\hat\nabla^3$ and $\hat\nabla^1$ have a common parallel section namely $(0,1)=s^{\tau}$. Hence $\TPf(\hat{\nabla}^3,\hat{\nabla^1})\bigr|_{\infty}=0.$Taking into account that $\TPf$ commutes with pull-back we get (\ref{eq7.2}).
  
  In order to compute the residue
  \[ \int_{P/\overline{DE}}\omega,
  \]
  one uses again the observation in Proposition \ref{evrTis} to infer that
  \[\int_{S(\bR\oplus E_b)}\TPf(\hat\nabla^1,\hat{\nabla}^2)=-1.\] 

We will  deal with
\begin{equation} \label{2issues}\lim_{t\ra \infty}(s^{\tau}_t)^*\gamma\;\; \mbox{and}\;\;\int_{P/\overline{DE}}-\TPf(\hat{\nabla}^3,\hat{\nabla}^1)
\end{equation}
together. As we said before, $\gamma\bigr|_{\infty}$ is not apriori defined since $\hat{\nabla}^3$  makes no sense at $[\infty]$. To remedy this, we will blow-up the south and the north pole of $S(\bR \oplus\pi^*E)$ just as we did in the proof of Theorem \ref{transfor} and move everything to this space. Consider therefore the fiber bundle projection:
 \[\mu:[-1,1]\times S(\pi^*E)\ra \overline {DE},\qquad (t,v,w,b)\ra (w,b)\]
  and the smooth map (morphism of bundles over $\overline{DE}$):
\[ \qquad\qquad\mathscr{B}l: [-1,1]\times S(\pi^*E) \ra S(\bR \oplus\pi^*E),\]\[ \mathscr{B}l(t,v,w,b)= (t,v\sqrt{1-t^2},w,b),\qquad \forall v, w\in E_b, |v|=1, |w|\leq 1, t\in[-1,1].
\]
Since $\rho\circ \mathscr{B}l=\mu$ we get that $\mathscr{B}l^*\rho^*(\bR \oplus\pi^*E)$ is naturally isomorphic to $\bR\oplus \mu^*\pi^*E$.

On $\bR\oplus \mu^*\pi^*E\ra [-1,1]\times S(\pi^*E)$ there are two natural line bundles, namely $\bR\oplus 0$ and $\tilde{\tau}\subset \mu^*E$, the tautological bundle  induced by the tautological section of $\mu^*\pi^*E\ra S(\pi^*E) $, constant in $t$. These two line bundles span a \emph{global} plane subbundle $\tilde{\mathscr{P}}$ of  $\bR\oplus\mu^*\pi^*E$. On $\tilde{\mathscr{P}}$ consider the connection  $\nabla^{\tilde{\mathscr{P}}}$ which equals the trivial connections on $\bR\oplus\tilde{\tau}$ generated by the obvious sections and equals the projection of $d\oplus \mu^*\pi^*\nabla$ on the orthogonal complement of $\bR\oplus\tilde{\tau}$. One checks rather easily  that away from $\{\pm 1\}\times S(\pi^*E)$ the connection $\nabla^{\tilde{\mathscr{P}}}$ coincides with $\mathscr{B}l^*(\hat{\nabla}^3)$.

 There exists a third line subbundle $\mathscr{B}l^*\tau\subset \bR\oplus \mu^*\pi^*E$. Its fiber at $(t,v,w,b)$ is just
 \[\bR(t,v\sqrt{1-t^2})\subset \bR\oplus E_b.\]
 It is plain  to see that in fact $\mathscr{B}l^*{\tau}\subset \tilde{\mathscr{P}}$ since the fiber of $\tilde{\mathscr{P}}$ at $(t,v,w,b)$ is:
 \[ \langle (0,v),(1,0)\rangle.
 \]
 Now, just as before, the sections $(1,0)$ and $\mu^*s^{\tau}$ give rise to two connections  on $\bR\oplus\mu^*\pi^*E$.   These two connections coincide in fact with $\mathscr{B}l^*\hat{\nabla}^2$ and $\mathscr{B}l^*\hat{\nabla}^1$.

 Using $\mathscr{B}l^*\hat{\nabla}^1$, $\mathscr{B}l^*\hat{\nabla}^2$ and $\nabla^{\tilde{\mathscr{P}}}$, one defines a secondary transgression class $\TPf^{1,2,3}$ as in (\ref{defsectra}). Notice that via the tautological section $s^{\tau}: SE\ra S(\bR \oplus\pi^*E)$ seen as a section of $[-1,1]\times S(\pi^*E)$ by composing with $\mathscr{B}l^{-1}$,  the form $\TPf^{1,2,3}$ pulls back to  $\TPf(\nabla^1,\nabla^2,\nabla^3)$, again due to the naturality of the constructions. 
 
 Let $\hat{s}_t^{\tau}:SE\ra S(\pi^*E)\times [-1,1]$ be the section $\mathscr{B}l^{-1}\circ s^{\tau}_t$. Notice that when $t\ra \infty$, $\hat{s}_t^{\tau}\bigr|_{SE}$ converges smoothly to the section:
 \[ \hat{s}_{\infty}^{\tau}(w,b):=(w,-1,w,b)\in S(\pi^*E)\times \{-1\}.
 \]
One obviously has:
\begin{equation}\label{stau} (s^{\tau}_t)^*\gamma=(\hat{s}_t^{\tau})^*\TPf^{1,2,3}.
\end{equation}
Making  $t\ra \infty$ in (\ref{stau}) we are led to consider  $(\hat{s}_{\infty}^{\tau})^*\TPf^{1,2,3}$ which means we have to look at the three connections $\mathscr{B}l^*\hat{\nabla}^1$, $\mathscr{B}l^*\hat{\nabla}^2$ and $\nabla^{\tilde{\mathscr{P}}}$ when $\bR\oplus \mu^*\pi^E$ gets restricted to $\{-1\}\times S(\pi^*E)$. It is now straightforward to notice that along  $\{-1\}\times S(E)$, the connections $\mathscr{B}l^*\hat{\nabla}^1$ and $\mathscr{B}l^*\hat{\nabla}^2$ coincide and they also coincide with $d\oplus \mu^*\pi^*\nabla$. Moreover the constant section $(1,0)$ of $\bR\oplus \mu^*\pi^*E\ra \{-1\}\times S(\pi^*E)$   is also parallel for $\nabla^{\tilde{\mathscr{P}}}$. Hence all three connections under inspection have a common parallel section. This implies that the secondary transgression class they determine is $0$ since the Pfaffian of the auxiliary connection $\tilde{\nabla}$ in (\ref{tina2}) is zero as the later is easily seen to have  a parallel section.

 Hence $\displaystyle\lim_{t\ra \infty}(s^{\tau})_t^*\gamma=0$ which takes care of the first part of (\ref{2issues}).

For the second part, we observe that
\begin{equation}\label{mupush}\int_{P/\overline{DE}} -\TPf(\hat{\nabla}^3,\hat{\nabla}^1)=-\mu_*\TPf(\nabla^{\tilde{\mathscr{P}}},\mathscr{B}l^*\hat{\nabla}^1).
\end{equation}
We claim that the right hand side of (\ref{mupush}) is $0$ because of symmetry. Consider the bundle morphism:
\[ (\psi,\varphi):(\bR\oplus \mu^*\pi^*E, [-1,1]\times S(\pi^*E))\ra(\bR\oplus \mu^*\pi^*E, [-1,1]\times S(\pi^*E))
\]
\[\psi(t,u,s,w,v,b)= (-t,u,-s,w,v,b),\]\[ \varphi(s,w,v,b)=(-s,w,v,b),\qquad u,w,v\in E_b,\; |w|=1, |v|\leq 1
\]
It is relatively straighforward to check that the conditions of Proposition \ref{trandsym} are fullfilled and therefore:
\[\varphi^*\TPf(\nabla^{\tilde{\mathscr{P}}},\mathscr{B}l^*\hat{\nabla}^1)=\TPf(\nabla^{\tilde{\mathscr{P}}},\mathscr{B}l^*\hat{\nabla}^1).
\]
Hence,  integrating $\TPf(\nabla^{\tilde{\mathscr{P}}},\mathscr{B}l^*\hat{\nabla}^1)$ over $[-1,1]\times S(E_b)$ one gets $0$.
\end{proof}
\begin{remark} Technically speaking, in the proof above  one should work directly on $[-1,1]\times S(\pi^*E)$ as we did in the last part  rather than on  $S(\bR\times \pi^*E)$ as one has to justify also the existence of a homotopy formula "at infinity", in the same spirit to what was done in the proof of Theorem \ref{transfor}. The details are straightforward. 
\end{remark}

\begin{appendix}
\section{Characteristic forms and symmetry}
It is many times useful to know how certain symmetries of the connection reflect into symmetries of the characteristic forms associated to it.

Let $\pi:E\ra B$ be a vector bundle associated to a $G$-principal bundle $P\ra B$ where $G$ is a group via the action of $G$ on the vector space $V$. Let $Q$ be a $G$-invariant polynomial on $\End(E)$. Assume that $E$ is endowed with $G$-compatible connection $\nabla$, i.e. it is induced by a $G$-principal connection $\omega$ on $P$.

\begin{prop} \label{conact}
Let  $(\psi,\varphi):(E,B)\ra (E,B)$ be a bundle isomorphism that makes the diagram
\[ \xymatrix{E \ar[r]^{\psi}\ar[d]_{\pi} & E\ar[d]^{\pi}\\
 B\ar[r]^{\varphi} & B}
\] 
commutative such that
\begin{itemize}
\item[(i)] $\tilde\psi$ is a $G$-isomorphism.
\item[(ii)]$ \tilde{\psi}^{-1}\circ(\varphi^*\nabla)\circ\tilde{\psi}=\nabla.$
\end{itemize}
where $\tilde{\psi}:E\ra \varphi^*E$ is the induced isomorphism via the universal property of a cartesian product. Then
\[ \varphi^*Q(\nabla)=Q(\nabla).
\]
\end{prop}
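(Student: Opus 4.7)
The strategy is to reduce the statement to the defining $G$-invariance of $Q$ by carefully tracking how curvature and characteristic polynomials behave under pull-back and conjugation. First, I would verify the naturality statement
\[
\varphi^{*} Q(\nabla) \;=\; Q(\varphi^{*}\nabla),
\]
which follows from the fact that $F(\varphi^{*}\nabla) = \varphi^{*} F(\nabla)$ (curvature is natural under pull-back of connections) combined with the fact that applying a polynomial $Q$ to an $\End(E)$-valued $2$-form commutes with pulling the form back to the base.

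Next I would use hypothesis (ii) in the form
\[
\varphi^{*}\nabla \;=\; \tilde{\psi}\circ \nabla \circ \tilde{\psi}^{-1},
\]
to compute the curvature of $\varphi^{*}\nabla$: a direct calculation (or invoking the transformation law for connections under a bundle automorphism) yields
\[
F(\varphi^{*}\nabla)\;=\;\tilde{\psi}\circ F(\nabla)\circ \tilde{\psi}^{-1}.
\]
In other words, the curvature of the pulled-back connection is the conjugate of $F(\nabla)$ by the bundle automorphism $\tilde{\psi}$.

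Finally, I would invoke hypothesis (i): since $\tilde{\psi}$ is a $G$-isomorphism of principal $G$-bundles (equivalently, acts fiberwise as a section of the adjoint bundle, and in local $G$-trivializations by elements of $G$), its conjugation action on $\End(E)$ is by elements of $G$ at every point. Because $Q$ is $G$-invariant on $\End(V)$, this conjugation preserves $Q$ pointwise, so
\[
Q\bigl(\tilde{\psi}\circ F(\nabla)\circ \tilde{\psi}^{-1}\bigr) \;=\; Q\bigl(F(\nabla)\bigr).
\]
Stringing the identities together gives $\varphi^{*}Q(\nabla) = Q(\varphi^{*}\nabla) = Q(\nabla)$, as desired. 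The main conceptual step (and the only place any real work hides) is unpacking what ``$\tilde{\psi}$ is a $G$-isomorphism'' means concretely enough to conclude that fiberwise conjugation by $\tilde{\psi}$ acts by $G$ on $\End(E)$; the rest is formal manipulation of curvature and pull-back.
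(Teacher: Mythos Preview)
Your argument is correct and follows exactly the same route as the paper's proof, which simply records the identity $Q(\varphi^*\nabla)=\varphi^*Q(\nabla)$, combines it with hypothesis (ii), and invokes the $G$-invariance of $Q$ under conjugation. You have merely spelled out the intermediate curvature computation that the paper leaves implicit.
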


The map $\tilde{\psi}:E\ra \varphi^*E$ is called a $G$-isomorphism if it arises from an isomorphism of principal bundles $\hat{\psi}:P\ra \varphi^* P$ by taking $\hat{\psi}\times \id_V$ and moding out the action of $G$.
\begin{proof} Straightforward by combining
\[Q(\varphi^*\nabla)=\varphi^*Q(\nabla)\]
with (ii) and the invariance of $Q$ under conjugation.
\end{proof}
Proposition \ref{conact} says that if $\nabla$ is invariant under the action of a $G$-bundle isomorphism pair $(\psi,\varphi)$ then any characteristic form associated to it is invariant under the action of $\varphi$.
\begin{example} If $B=S^{2n}$ is the even-dimensional sphere and  $A\in \SO(2n+1)$, then the Levi-Civita connection $\nabla$ is invariant under the action of the pair $(A,dA)$ hence the Pfaffian $\Pf(\nabla)$ satisfies:
\[ A^*\Pf(\nabla)=\Pf(\nabla).
\]
In other words, the Pfaffian is a rotationally invariant form on $S^{2n}$, hence it has to be a constant multiple of the volume form, constant that can be determined at any given point.
\end{example}
The same property can be investigated  for transgression forms. Assume for simplicity that $G= O(n)$. Consider two splittings
\[E=L_i\oplus L_i^{\perp},\qquad i=1,2
\]
where each line bundle $L_i$  is trivialized by a section $s_i:B\ra E$. Consider the connections induced by the splittings 
\[ \nabla^i=d\oplus \nabla^{L_i^{\perp}},\;\; i=1,2.
\] 
where $\nabla^{L_i^{\perp}}$ is the projection of $\nabla$ onto $L_i^{\perp}$.
\begin{prop}\label{trandsym} Suppose that in addition to the properties of Proposition \ref{conact}, the isomorphism $(\psi,\varphi)$ satisfies:
\begin{itemize}
\item[(iii)] $s_i\circ \varphi=\psi\circ s_i$, for all $b\in  B$ for both $i=1,2$.
\end{itemize}
Then
\[ \varphi^*\TPf(\nabla^1,\nabla^2)=\TPf(\nabla^1,\nabla^2).
\]
\end{prop}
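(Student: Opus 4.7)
The plan is to reduce the statement to an application of Proposition \ref{conact}, but applied on the auxiliary bundle $p_2^*E \to [0,1]\times B$ used to define $\TPf(\nabla^1,\nabla^2)=\int_{[0,1]}\Pf(\tilde\nabla)$.

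First I would verify that each of the intermediate connections $\nabla^i=d\oplus\nabla^{L_i^\perp}$ is itself invariant under $(\psi,\varphi)$ in the sense of hypothesis (ii) of Proposition \ref{conact}, i.e.\ $\tilde\psi^{-1}\circ(\varphi^*\nabla^i)\circ\tilde\psi=\nabla^i$. Condition (iii) says $\psi\circ s_i=s_i\circ\varphi$, which forces $\psi$ to map the line subbundle $L_i=\mathrm{span}(s_i)$ to itself and to preserve the chosen trivialization; since $G=O(n)$, the $G$-isomorphism $\psi$ is moreover an isometry, so it also preserves the orthogonal complement $L_i^\perp$. Thus the splitting $E=L_i\oplus L_i^\perp$ is $(\psi,\varphi)$-invariant. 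Invariance of the trivial connection $d$ on $L_i$ is immediate from (iii) (the trivializing section is matched to itself), while invariance of $\nabla^{L_i^\perp}$ follows from combining hypothesis (ii) for $\nabla$ with the fact that orthogonal projection onto $L_i^\perp$ commutes with $\tilde\psi$.

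Next I would lift the whole picture to $[0,1]\times B$. Define the bundle morphism $(\Psi,\Phi):=(\id_{[0,1]}\times\psi,\;\id_{[0,1]}\times\varphi)$ acting on $(p_2^*E,[0,1]\times B)$. Since $\tilde\nabla=\tfrac{d}{dt}+(1-t)\nabla^1+t\nabla^2$ is an affine combination of $\nabla^1$ and $\nabla^2$, and the operator $\tfrac{d}{dt}$ trivially commutes with $\Psi$ (which acts as the identity in the $[0,1]$ factor), the invariance of the two endpoint connections established above propagates to $\tilde\nabla$, giving $\widetilde\Psi^{-1}\circ(\Phi^*\tilde\nabla)\circ\widetilde\Psi=\tilde\nabla$. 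Proposition \ref{conact} applied to the $O(n)$-invariant polynomial $\Pf$ and the connection $\tilde\nabla$ then yields $\Phi^*\Pf(\tilde\nabla)=\Pf(\tilde\nabla)$.

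Finally I would conclude by integrating along the $[0,1]$-fiber. Because $\Phi$ is a product map that restricts to the identity on $[0,1]$, fiber integration commutes with pull-back, i.e.\ $\int_{[0,1]}\Phi^*\alpha=\varphi^*\int_{[0,1]}\alpha$ for any $\alpha\in\Omega^*([0,1]\times B)$; applied to $\alpha=\Pf(\tilde\nabla)$ this gives $\varphi^*\TPf(\nabla^1,\nabla^2)=\TPf(\nabla^1,\nabla^2)$. The only step that is not immediately mechanical is the first one, namely checking that $\psi$ simultaneously respects both splittings $E=L_i\oplus L_i^\perp$; but this is a direct consequence of (iii) together with $\psi$ being an isometry, so the main content of the argument is really the observation that invariance passes through the affine interpolation and then through fiber integration.
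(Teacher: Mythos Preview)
Your proof is correct and follows essentially the same approach as the paper: both arguments hinge on establishing $\tilde\psi^{-1}(\varphi^*\nabla^i)\tilde\psi=\nabla^i$ from (iii) and the isometry property of $\psi$, and then deduce the invariance of the transgression. The only cosmetic difference is that you unpack the naturality of $\TPf$ by explicitly lifting to $[0,1]\times B$ and invoking Proposition~\ref{conact} for $\Pf(\tilde\nabla)$, whereas the paper simply asserts $\varphi^*\TPf(\nabla^1,\nabla^2)=\TPf(\tilde\psi^{-1}(\varphi^*\nabla^1)\tilde\psi,\tilde\psi^{-1}(\varphi^*\nabla^2)\tilde\psi)$ as following directly from the definition.
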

\begin{proof} On one hand it follows directly from the definition that
\[ \varphi^*\TPf(\nabla^1,\nabla^2)=\TPf(\varphi^*\nabla^1,\varphi^*\nabla^2)=\TPf(\tilde\psi^{-1}(\varphi^*\nabla^1)\tilde\psi,\tilde\psi^{-1}(\varphi^*\nabla^2)\tilde\psi).
\]
On the other hand:
\begin{equation}\label{eqap1}\tilde{\psi}^{-1}(\varphi^*\nabla^i)\tilde\psi=\nabla^i.
\end{equation}
Let us check that (\ref{eqap1}) holds. From (iii) it follows that $\tilde{\psi}$ interweaves the trivializing sections for $L_i$ and $\phi^*L_i$ and hence:
\[ \tilde{\psi}^{-1}(\varphi^*d)\tilde{\psi}=d,
\]
on $L_i$, since $d$ is defined by $s_i$. One needs to prove that:
\begin{equation}\label{eqap2} \tilde{\psi}^{-1}\left({\varphi}^*P_{L_i^{\perp}}\nabla\right)(\tilde{\psi}\circ s)=P_{L_i^{\perp}}\nabla s,\qquad \forall s\in \Gamma(E).
\end{equation}
The next statement which follows from (iii)
\[ \tilde{\psi}\circ P_{L_i^{\perp}}=P_{\varphi^*L_i^{\perp}}\circ \tilde{\psi}
\]
together with (ii), imply that 
\[\tilde{\psi}\circ (P_{L_i^{\perp}}\nabla)_X(\tilde{\psi}^{-1}\circ t)=P_{\varphi^*L_i^{\perp}}(({\varphi}^*\nabla)_Xt),\;\;\forall t\in \Gamma(\varphi^*E).
\]
On the other hand it is straightforward to check that
\[ P_{\varphi^*L_i^{\perp}}(\varphi^*\nabla_Xt)=\varphi^*(P_{L_i^{\perp}}\nabla)_Xt
\]
Take now $s=\tilde\psi^{-1}\circ t$ and one gets (\ref{eqap2}). 
\end{proof}

\end{appendix}

\end{document}